\newtheorem{Theorem}{Theorem}[section]
\newtheorem{proposition}[Theorem]{Proposition}
\newtheorem{corollary}[Theorem]{Corollary}
\newtheorem{lemma}[Theorem]{Lemma}
\theoremstyle{definition}
\newtheorem{Remark}[Theorem]{Remark}
\newtheorem{definition}[Theorem]{Definition}
\title{A Nekrasov--Okounkov type formula for $\widetilde{C}$}
\author[Mathias P\'etr\'eolle]{Mathias P\'etr\'eolle}
\address{Institut Camille Jordan, Universit\'e Claude Bernard Lyon 1,
69622 Villeurbanne Cedex, France}
\email{petreolle@math.univ-lyon1.fr}
\urladdr{http://math.univ-lyon1.fr/{\textasciitilde}petreolle}
\keywords{Macdonald identities, Dedekind $\eta$ function, affine root systems, integer partitions, t-cores}
\begin{document}
\maketitle

\begin{abstract}
In 2008, Han rediscovered an expansion of powers of Dedekind $\eta$ function attributed to Nekrasov and Okounkov  (which was actually first proved the same year by Westbury) by using a famous identity of Macdonald in the framework of type $\widetilde{A}$ affine root systems. In this paper, we obtain new combinatorial expansions of powers of $\eta$, in terms of partition hook lengths, by using the Macdonald identity in type $\widetilde{C}$ and a new bijection between vectors with integral coordinates and a subset of $t$-cores for integer partitions. As applications, we derive a symplectic hook formula and an unexpected relation between the Macdonald identities in types $\widetilde{C}$, $\widetilde{B}$, and $\widetilde{BC}$. We also generalize these expansions through the Littlewood decomposition and deduce in particular many new weighted generating functions for subsets of integer partitions and refinements of hook formulas.

\end{abstract}

\section{Introduction}
Recall the Dedekind $\eta$ function, which is a weight $1/2$ modular form defined as follows:
\begin{equation}
\eta(x) =x^{1/24} \prod _{k \geq 1} (1-x^k),
\end{equation}
where $x$ is a complex number satisfying $|x| < 1$ (we will assume this condition all along this paper). Apart from its modular properties, due to the factor $x^{1/24}$, this function plays a fundamental role in combinatorics, as it is related to the generating function of integer partitions. Studying expansions of powers of $\eta$ is natural, in the sense that it yields a certain amount of interesting questions both in combinatorics and number theory, such as Lehmer's famous conjecture (see for instance \cite{JPS}).
In 2006, in their study of the theory of Seiberg--Witten on supersymetric gauges in particle physics \cite{NO}, Nekrasov and Okounkov obtained the following formula:
\begin{equation}\label{nekrasov}
\prod_{k \geq 1} (1-x^k)^{z-1} = \sum_{\lambda \in \mathcal{P}} x^{|\lambda|}  \prod_{h\in \mathcal{H}(\lambda)} \left( 1-\frac{z}{h^2} \right),
\end{equation}
where $z$ is any complex number, $\mathcal{P}$ is the set of integer partitions and $\mathcal{H}(\lambda)$ is the multi-set of hook lengths of $\lambda$ (see Section~\ref{section1} for precise definitions). Actually, formula \eqref{nekrasov} was first proved algebraically through D'Arcais polynomials by Westbury in \cite{WEST}.
In 2008, this expansion was rediscovered and generalized by Han \cite{HAN}, through two main tools, one arising from an algebraic context and the other from a more combinatorial one. From this result, Han derived many applications in combinatorics and number theory, such as the marked hook formula, a reformulation of Lehmer's conjecture or a refinement of a result due to Kostant \cite{KOS}. Formula \eqref{nekrasov} was next proved and generalized differently by Iqbal \emph{et al.} in 2013 \cite{IEA} by using plane partitions, the Cauchy formula for Schur functions and the notion of topological vertex. 

Apart from the many applications given by Han in \cite{ HAN09,HAN, HAN10b}, and Han and Ji in \cite{HJ}, formula \eqref{nekrasov} was also used in~\cite{CW} to derive new Ramanujan-type congruences over small arithmetic progressions and in \cite{KEI} to obtain polynomial analogues of Ramanujan congruences. It  was also used in \cite{VEL} to give an exact expression of some specific interesting coefficients through analytic number theoretic methods.

Han's proof of \eqref{nekrasov} uses on the one hand a bijection between $t$-cores and some vectors of integers, due to Garvan, Kim and Stanton in their proof of Ramanujan congruences \cite{GKS}. Recall that $t$-cores had originally been introduced in representation theory to study some characters of the symmetric group \cite{GK}. On the other hand, Han uses the Macdonald identity for affine root systems \cite{ARS}. Recall that it is a generalization of Weyl formula for finite root systems $R$ which can itself be written as follows:
\begin{equation}
\prod _{\alpha >0} (e^{\alpha/2} - e^{-\alpha/2}) = \sum_{w \in W(R)} \varepsilon (w) e^{w \rho},
\end{equation}
where the sum is over the elements of the Weyl group $W(R)$, $\varepsilon$ is the sign, and $\rho$ is an explicit vector depending on $W(R)$. Here, the product ranges over the positive roots $R^+$, and the exponential is formal. Macdonald specialized his formula for several affine root systems and exponentials. In all cases, when the formal exponential is specialized to the constant function equal to 1, the product side can be rewritten as an integral power of Dedekind $\eta$ function (this power is the dimension of a compact Lie group having $R$ as its system of roots). In particular, the specialization used in~\cite{HAN} corresponds  to the type $\widetilde{A}_t$, for an odd positive integer $t$, and reads (here $\|.\|$ is the euclidean norm):
\begin{equation}\label{equaA}
\eta(x)^{t^2-1} =c_0 \sum_{{\bf v}} x^{\|{\bf v}\|^2 /2t}\prod_{i<j}(v_i-v_j) , \quad \mbox{with~} c_0:= \frac{(-1)^{(t-1)/2}}{1! 2! \cdots (t-1)!},
\end{equation}
where the sum is over $t$-tuples ${\bf v} :=(v_0,\ldots, v_{t-1}) \in \mathbb{Z}^{t}$ such that $v_i \equiv i \mbox{~mod~} t$ and $v_0+\cdots +v_{t-1}=0$. Han next uses a refinement of the aforementioned bijection from \cite{GKS} to transform the right-hand side into a sum over partitions,  and proves \eqref{nekrasov} for all odd integers $t$. He finally transforms the right-hand side through very technical considerations to show that \eqref{nekrasov} is in fact true for all complex numbers $t$. A striking remark is that the factor of modularity $x^{(t^2-1)/24}$ in $\eta(x)^{t^2-1}$ cancels naturally in the proof when the bijection is used.
\medskip

This approach immediately raises a question, which was asked by Han in~\cite[Problem 6.4]{HAN09}: can we use specializations of the Macdonald formula for other types to find new combinatorial expansions of the powers of $\eta$? In the present paper, we give a positive answer for the case of type  $\widetilde{C}$ and, as shall be seen later, for types $\widetilde{B}$ and $\widetilde{BC}$. In type $\widetilde{C}_t$, $t \geq 2$ being an integer, the Macdonald formula reads:
\begin{equation}\label{equaC}
\eta(x)^{2t^2+t} = c_1 \sum_{{\bf v}} x^{\|{\bf v}\|^ 2/(4t+4)} \prod_i v_i \prod_{i<j}(v_i^2-v_j^2),
\end{equation}
where $\displaystyle c_1:= \frac{(-1)^{\lfloor t/2 \rfloor}}{1! 3! \cdots (2t+1)!}$, and the sum ranges over $t$-tuples ${\bf  v} :=(v_1,\ldots, v_t) \in \mathbb{Z}^t$ such that $v_i \equiv i \mbox{~mod~} 2t+2$. The first difficulty in providing an analogue of \eqref{nekrasov} through \eqref{equaC} is to find which combinatorial objects should play the role of the partitions $\lambda$. Our main result is the following possible answer.

\begin{Theorem}\label{theoremeintro} For any complex number $t$, with the notations and definitions of Section~\ref{section1}, the following expansion holds:
\begin{equation}\label{eqtheoremeintro}
\prod_{k \geq 1}(1-x^k) ^{2t^2+t} = \sum_{\lambda \in DD}\delta_\lambda\,   x^{|\lambda|/2} \prod_{h \in  \mathcal{H}(\lambda)} \left( 1- \frac{2t+2}{h\, \varepsilon_h }\right),
\end{equation}
where the sum is over doubled distinct partitions $\lambda$, $\delta_\lambda$ is equal to $1$ (respectively $-1$) if the Durfee square of $\lambda$ is  of even (respectively odd) size, and $\varepsilon_h$ is equal to $-1$ if $h$ is the hook length of a box strictly above the diagonal in the Ferrers diagram of $\lambda$ and to $1$ otherwise.
\end{Theorem}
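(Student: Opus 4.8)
The plan is to transpose Han's derivation of \eqref{nekrasov} from type $\widetilde A$ to type $\widetilde C$: I would start from the Macdonald identity \eqref{equaC}, turn its right-hand side into a sum over doubled distinct partitions by means of the new bijection, and then pass from integer to arbitrary complex $t$ through a polynomiality argument.

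First I would settle the polynomiality that legitimizes working with integers. For a fixed integer $N$, the coefficient of $x^N$ in $\prod_{k\ge1}(1-x^k)^{2t^2+t}$ is a D'Arcais polynomial evaluated at $2t^2+t$, hence a polynomial in $t$. On the right-hand side of \eqref{eqtheoremeintro}, only the finitely many $\lambda\in DD$ with $|\lambda|=N$ contribute to $x^{N/2}$, and each contributes $\delta_\lambda\prod_{h\in\mathcal H(\lambda)}\bigl(1-\tfrac{2t+2}{h\,\varepsilon_h}\bigr)$, again a polynomial in $t$. Both sides are therefore, coefficientwise in $x$, polynomials in $t$, so it suffices to establish \eqref{eqtheoremeintro} for every integer $t\ge2$.

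Fix such a $t$ and set $L:=2t+2$. The factor $1-\tfrac{L}{h\,\varepsilon_h}$ vanishes exactly when $h\,\varepsilon_h=L$, that is, for a box on or below the diagonal ($\varepsilon_h=1$) whose hook length equals $L$. From the structure of doubled distinct partitions, every hook length occurring strictly above the diagonal also occurs on or below it; combined with the classical fact that a partition has a hook length divisible by $L$ if and only if it has one equal to $L$, this shows that whenever $\lambda$ is not an $L$-core it carries an on-or-below-diagonal hook of length $L$, so its product vanishes. Hence, for integer $t$, the right-hand side of \eqref{eqtheoremeintro} collapses to a sum over the doubled distinct partitions that are $L$-cores, precisely the image of the new bijection.

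It remains to identify this core sum with \eqref{equaC}. The product $\prod_i v_i\prod_{i<j}(v_i^2-v_j^2)$ is the type $C_t$ Weyl denominator, antisymmetric under permutations and sign changes of the $v_i$, so only vectors with distinct nonzero $|v_i|$ contribute, and the bijection carries these to doubled distinct $L$-cores $\mu$. Two things must then be checked: that $\|{\bf v}\|^2/(4t+4)=|\mu|/2+(2t^2+t)/24$, so that the modular prefactor $x^{(2t^2+t)/24}$ of $\eta(x)^{2t^2+t}$ is absorbed and the size enters as $x^{|\mu|/2}$ — already visible on the base vector $(1,2,\dots,t)\leftrightarrow\varnothing$, for which $\sum_{i=1}^t i^2/(4t+4)=(2t^2+t)/24$ — and the signed hook-length identity $c_1\,\prod_i v_i\prod_{i<j}(v_i^2-v_j^2)=\delta_\mu\prod_{h\in\mathcal H(\mu)}\bigl(1-\tfrac{L}{h\,\varepsilon_h}\bigr)$. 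This last identity, a symplectic hook formula expressing the Weyl denominator as a signed product over the hooks of the core with global sign dictated by the parity of the Durfee square, is the step I expect to be the main obstacle: it requires reading the hook lengths and their diagonal positions off the beta-set encoded by ${\bf v}$ and matching every factor, every sign $\varepsilon_h$, and the sign $\delta_\mu$ — the analogue of the most technical part of Han's argument. Granting it, \eqref{eqtheoremeintro} holds for all integers $t\ge2$, whence for all complex $t$ by the polynomiality established above.
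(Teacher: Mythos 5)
Your overall architecture matches the paper's (Macdonald identity in type $\widetilde{C}_t$, a bijection from the vectors ${\bf v}$ to a family of cores, a term-by-term identification of the Weyl denominator with a signed hook product, then polynomiality), and several of your steps are sound: the polynomiality reduction to integers $t\ge 2$ is exactly the argument used in the paper; the exponent bookkeeping $\|{\bf v}\|^2/(4t+4)=|\nu|/2+(2t^2+t)/24$ is correct; and your observation that the right-hand side collapses onto doubled distinct $(2t+2)$-cores (because every hook length occurring strictly above the diagonal of a $DD$ partition recurs on or below it, so a non-core always carries a vanishing factor with $\varepsilon_h=1$) is a clean shortcut — the paper instead identifies the support as pairs $(\lambda,\mu)\in SC_{(t+1)}\times DD_{(t+1)}$ via the vanishing of the product $Q$ in the proof of Theorem~\ref{thmprincipal}, and only afterwards passes to a single $DD$ partition $\nu$ whose principal hook lengths are the doubled set $2\Delta$.

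However, there is a genuine gap: the identity
$c_1\,\prod_i v_i\prod_{i<j}(v_i^2-v_j^2)=\delta_\nu\prod_{h\in\mathcal H(\nu)}\bigl(1-\tfrac{2t+2}{h\,\varepsilon_h}\bigr)$,
together with the construction of a weight-preserving bijection ${\bf v}\leftrightarrow\nu\in DD_{(2t+2)}$, is precisely the content you label ``the main obstacle'' and then grant. This is not a routine verification: in the paper it occupies essentially all of Sections~\ref{section3.1}--\ref{section3.3}, namely the bijection $\varphi$ onto pairs in $SC_{(t+1)}\times DD_{(t+1)}$ with the key relation $t+1+\Delta_i=\sigma_i((2t+2)n_i+i)$ linking the coordinates $n_i$ to the maxima of principal hook lengths in each congruence class mod $2t+2$, the telescoping machinery of $2t+2$-compact sets (Lemmas~\ref{lemme han}--\ref{lemme 2}), the induction on principal hooks in Lemma~\ref{lemme-recap} that produces both the hook factors and the sign $\delta_\lambda\delta_\mu$, and finally Theorem~\ref{bijcouple} matching the resulting product with the hooks and $\varepsilon$-signs of $\nu$. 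Without this, the proof establishes only that both sides are supported on the same set of partitions and have compatible $x$-exponents; the actual equality of coefficients — the theorem itself — remains unproven. If you want to carry out your ``read the hooks off the beta-set of ${\bf v}$'' plan, you would in effect have to reconstruct this entire chain of lemmas (or an equivalent), so the proposal as written cannot be accepted as a proof.
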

 The global strategy to prove this is to use~\eqref{equaC} and a bijection, obtained through results of~\cite{GKS}, between the set of vectors involved in \eqref{equaC} and pairs of  partitions $(\lambda,\mu)$, where $\lambda$ is a self-conjugate $t+1$-core and $\mu$ is a doubled distinct $t+1$-core (precise definitions are given in Section~\ref{section1}). Some technical lemmas regarding $2t+2$-compact sets and the principal hook lengths of $(\lambda,\mu)$ allow us to prove Theorem~\ref{thmprincipal} below (an argument of polynomiality is also needed). A bijection between pairs $(\lambda,\mu)$ and doubled distinct partitions is then used to establish Theorem~\ref{theoremeintro}.
 
Many applications can be derived directly from Theorem~\ref{theoremeintro}. However, we will only highlight three of them here. The first is the following symplectic analogue (corresponding to type $ B$) of the famous combinatorial hook formula (which has many consequences in representation theory, see for instance \cite[Chapter 7]{EC}), valid for any positive integer $n$:
\begin{equation}\label{hookf}
\sum_{\stackrel{\lambda \in DD}{  |\lambda|= 2n} } \prod_{h \in \mathcal{H}(\lambda)}\frac{1}{h} = \frac{1}{2^n n!}.
\end{equation}

The second, which is more algebraic and expressed in Theorem~\ref{equi} below, is a surprising link between the family of Macdonald formulas in types $\widetilde{C}_t$ (for all integers $t \geq 2$), the one in types $\widetilde{B}_t$ (for all integers $t \geq 3$), and the one in types $\widetilde{BC}_t$ (for all integers $t \geq 1$). 

The third (see Theorem~\ref{kostantpet} below) is an improvement of a result due to Kostant \cite{KOS}, related to classical number theoretic questions regarding the non-nullity of some coefficients $f_k(s)$, where we write:
\begin{equation*}
\prod_{n \geq 1} (1-x^n)^s= \sum_{k\geq 0} f_k(s) x^k.
\end{equation*}
\smallskip

Nevertheless, many more applications can be derived when one aims to refine Theorem \ref{theoremeintro}, by adding more parameters, as did Han for \eqref{nekrasov}. We can prove the following result, by using the canonical correspondence between partitions and bi-infinite binary words beginning with infinitely many $0$'s and ending with infinitely many $1$'s, together with the classical Littlewood decomposition (see for instance \cite[p. 468]{EC}) and new properties for it. More precisely, we  use the stricking fact, specific to doubled distinct partitions, that the statistics $h\varepsilon_h$ and $\delta_\lambda$ of a doubled distinct partition $\lambda$ can be expressed in terms of the image of $\lambda$ under the Littlewood decomposition.
\begin{Theorem}\label{generalisation}
Let $t=2t'+1$ be an odd positive integer.  With the notations and definitions of Section~\ref{section1}, the following equality holds:
\begin{multline}\label{eqgeneralisation}
\sum_{\lambda \in DD} \delta_{\lambda}\, x^{|\lambda|/2} \prod_{ h \in \mathcal{H}_t(\lambda)}\left( y -\frac{yt(2z+2)}{\varepsilon_h ~ h}\right)\\= \prod_{k \geq 1}  (1-x ^k)(1-x^{kt})^ {t'-1} \left(1-x ^{kt}y^{2k}\right)^{(2z+1)(zt+3(t-1)/2)},
\end{multline}
where the sum ranges over doubled distinct partitions, and $\mathcal{H}_t(\lambda)$ is the multi-set of hook lengths of $\lambda$ which are integral multiples of $t$.
\end{Theorem}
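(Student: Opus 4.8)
The plan is to apply the Littlewood $t$-decomposition to the doubled distinct partitions indexing the left-hand side and to show that all three statistics there — the size $|\lambda|$, the sign $\delta_\lambda$, and the signed hook multiset $\{h\varepsilon_h:h\in\mathcal{H}_t(\lambda)\}$ — factor through the decomposition. Encoding $\lambda$ by its bi-infinite binary word and splitting positions by residue modulo $t$ realizes the classical bijection $\lambda\leftrightarrow(\widetilde\lambda;\lambda^{(0)},\dots,\lambda^{(t-1)})$ between $\lambda$, its $t$-core $\widetilde\lambda$, and its $t$-quotient, with $|\lambda|=|\widetilde\lambda|+t\sum_i|\lambda^{(i)}|$ and with $\mathcal{H}_t(\lambda)$ in size-preserving bijection with $t\cdot\bigl(\mathcal{H}(\lambda^{(0)})\sqcup\cdots\sqcup\mathcal{H}(\lambda^{(t-1)})\bigr)$. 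First I would prove that, when $t=2t'+1$ is odd, this bijection restricts to one between $DD$ and the triples in which $\widetilde\lambda$ is a doubled distinct $t$-core, the central component $\lambda^{(t')}$ is itself doubled distinct, and the remaining components are paired by conjugation, $\lambda^{(t-1-i)}=(\lambda^{(i)})'$ for $0\le i<t'$. The reflection of the binary word realizing conjugation of $\lambda$ fixes the (even) diagonal hooks of a doubled distinct partition, and it is this reflection symmetry, together with the existence of a single central residue class when $t$ is odd, that forces the stated pairing; this is where the hypothesis that $t$ is odd enters.

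The technical heart, which I expect to be the main obstacle, is to read $\varepsilon_h$ and $\delta_\lambda$ off the decomposition. The key point I would establish is that conjugation of $\lambda$ acts on the quotient by $\lambda^{(i)}\leftrightarrow(\lambda^{(t-1-i)})'$ and \emph{reverses} the sign $\varepsilon_h$: a box with hook $h=th'$ coming from class $i$ and the matching box of the conjugate partner coming from class $t-1-i$ carry the same quotient hook length $h'$ but opposite signs $\varepsilon_h$, while for the central class $i=t'$ (fixed by $i\mapsto t-1-i$) the sign $\varepsilon_h$ is the intrinsic above/below-diagonal sign of the box inside the doubled distinct partition $\lambda^{(t')}$. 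The delicate step is that the diagonal of $\lambda$ does not map to the diagonals of the components, so the sign must be extracted from the balance (charge) of zeros and ones in each residue subsequence; the doubled distinct symmetry of the word, which reflects it while fixing its centre, is exactly what aligns the central class with the diagonal of $\lambda$ and pairs the off-central classes with opposite charges. In the same analysis I would show $\delta_\lambda=\delta_{\widetilde\lambda}\,\delta_{\lambda^{(t')}}$, the conjugate pairs contributing no net sign since a partition and its conjugate have the same Durfee square size.

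Granting these statistic formulas, the sum factorizes. Writing $X=x^ty^2$ and using that each factor rewrites as $y-\dfrac{yt(2z+2)}{\varepsilon_h\,th'}=y\Bigl(1-\dfrac{2z+2}{\varepsilon_h\,h'}\Bigr)$ with the $t$ cancelling, the contribution of a conjugate pair $(\lambda^{(i)},(\lambda^{(i)})')$, summed over $\lambda^{(i)}\in\mathcal{P}$, becomes $\sum_{\mu}X^{|\mu|}\prod_{h'\in\mathcal{H}(\mu)}\bigl(1-\frac{2z+2}{h'}\bigr)\bigl(1+\frac{2z+2}{h'}\bigr)$, where the opposite signs come from the sign-reversal above; by the Nekrasov--Okounkov formula \eqref{nekrasov} this equals $\prod_{k\ge1}(1-X^k)^{(2z+2)^2-1}$. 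Summing the central component over $DD$ with its intrinsic signs is exactly Theorem~\ref{theoremeintro} with its parameter set to $z$ and $x$ replaced by $X$, giving $\prod_{k\ge1}(1-X^k)^{2z^2+z}$. Since there are $t'$ pairs, the total quotient contribution is $\prod_k(1-X^k)^{t'((2z+2)^2-1)+2z^2+z}$, and a direct check shows $t'((2z+2)^2-1)+2z^2+z=(2z+1)\bigl(zt+\tfrac{3(t-1)}{2}\bigr)$, matching the $y$-dependent factor of the right-hand side. Finally the core contributes the $y$-free factor $\sum_{\widetilde\lambda}\delta_{\widetilde\lambda}x^{|\widetilde\lambda|/2}$ over doubled distinct $t$-cores; evaluating this signed generating function through the integer-vector description of $t$-cores (a separate but more routine computation) yields $\prod_{k\ge1}(1-x^k)(1-x^{kt})^{t'-1}$, and multiplying the two contributions gives \eqref{eqgeneralisation}.
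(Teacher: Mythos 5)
Your proposal is correct and follows essentially the same route as the paper: the Littlewood $t$-decomposition of $DD$, the three statistic-transfer properties (the sign factorization $\delta_\lambda=\delta_{\tilde\lambda}\delta_{\lambda^0}$, preservation of the diagonal side for the self-paired quotient component, and sign reversal between the members of a conjugate pair), followed by Nekrasov--Okounkov for the pairs, Theorem~\ref{theoremeintro} for the central component, the signed generating function of doubled distinct $t$-cores for the core, and the final exponent check. The only minor divergences are that the paper obtains the signed core generating function by a bootstrap within the same decomposition (using $\sum_{\lambda\in DD}\delta_\lambda x^{|\lambda|/2}=\prod_k(1-x^k)$) rather than from the integer-vector description, and that it places the self-paired quotient component at index $0$ rather than $t'$; neither affects the argument.
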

 As shall be seen later, we derive several specializations of \eqref{eqgeneralisation}, such as many weighted generating functions for doubled distinct partitions and the following generalization of the symplectic hook formula \eqref{hookf}, valid for any positive integer $n$ and any odd positive integer $t$:
\begin{equation}\label{hookfgen}
 \sum_{\stackrel{ \lambda \in DD, ~ |\lambda|=2t n}{\# \mathcal{H}_t(\lambda)=2n }}\displaystyle\;\delta_\lambda \prod _{h \in \mathcal{H}_t (\lambda)} \frac{1}{ h\, \varepsilon_h}=\displaystyle \frac{(-1)^n}{n! t^n 2^n}.
\end{equation}
Note that it is not completely trivial that \eqref{hookf} is a consequence of \eqref{hookfgen}; the details are given in Section~\ref{section4}.
\medskip

\begin{Remark}\label{king}
During the presentation of a part of this work at the FPSAC 2015  conference in Daejeon, South Korea, we had fruitful discussions with Ronald King and Bruce Westbury. From these, it appeared that Theorem~\ref{theoremeintro}  and the Nekrasov--Okounkov formula~\eqref{nekrasov} can actually  be proved in a purely algebraic manner, through both a result of King and a result due to El Samra--King. (As said before, we also 
mention that~\eqref{nekrasov} already appeared in~\cite{WEST}, here again the proof is algebraic and uses D'Arcais polynomials.) We quickly outline the proof in type $\widetilde{C}$, as was communicated to us by King in \cite{KING15}. By using a modification rule for the characters of Lie algebras, the following equation is proved in~\cite[Equation (5.8b)]{KING89}:
\begin{equation}\label{eqking}
C_{q^k}(x)_N:=\prod_{1 \leq i\leq j \leq N} (1-q^kx_ix_j)= \sum_{\gamma \in DD} (-q)^{|\gamma|/2} \langle\gamma\rangle(x)_N,
\end{equation}
where $N$ is a positive integer, and $\langle\gamma\rangle(x)_N$ is the formal irreducible character of the symplectic group $Sp(N)$ associated with the partition $\gamma$.
By specializing the variables $x_i$ to $1$, and computing the limit, one obtains:
\begin{equation}
\prod_{k =1}^\infty (1-q^k)^{N(N+1)/2} = \sum_{\gamma \in DD} (-q)^{|\gamma|/2} D_N\langle\gamma\rangle,
\end{equation}
where $ D_N\langle\gamma\rangle$ is the formal dimension of the irreducible representation of $Sp(N)$ associated with $\gamma$. This dimension is computed in \cite[Equation 3.29]{KS}, which yields  Theorem~\ref{theoremeintro} after some calculations. Nevertheless, coming back to the right-hand side of Macdonald formula~\eqref{equaC} algebraically seems, according to King, quite tricky (as the vectors in Macdonald identities  are not in the Weyl dominant sector). Our approach therefore gives not only Theorem~\ref{theoremeintro}, but also the combinatorics which explains the link between the vectors involved in \eqref{equaC} and integers partitions. Moreover, Theorem~\ref{generalisation} seems out of reach through King's algebraic point of view.
\end{Remark}

This paper is organized as follows. In Section~\ref{section1}, we recall the definitions and notations regarding partitions, $t$-cores, self-conjugate and doubled distinct partitions. Section~\ref{section2} presents bijections between the already mentioned subfamilies of partitions and some vectors of $\mathbb{Z}^{t}$, and theirs properties that we will explain.  More precisely, Section~\ref{section3.1} introduces a new bijection between the vectors of integers involved in \eqref{equaC} and the pairs of self-conjugate and doubled distinct $t+1$-cores, Sections~\ref{section3.2}--\ref{section3.3} are devoted to the proof of Theorem~\ref{theoremeintro}, while Section~\ref{section3.4} presents the symplectic hook formula \eqref{hookf}, and the connection between \eqref{equaC} and the Macdonald identities in types $\widetilde{B}$ and $\widetilde{BC}$, which are shown in Theorem~\ref{equi} to be all generalized by Theorem~\ref{theoremeintro}. The aforementioned improvement of a result due to Kostant is given in Section~\ref{section3.5}. In Section~\ref{section4}, we prove Theorem~\ref{generalisation}. More accurately, Section~\ref{section4.1} recalls the Littlewood decomposition, while Section~\ref{section4.2} shows new properties for it concerning doubled distinct partitions. Section~\ref{section4.3} is devoted to the proof of Theorem~\ref{generalisation}, from which we deduce many applications. We end by some questions and remarks in Section~\ref{section5}.

\section{Integer partitions and $t$-cores}\label{section1}
In all this section, $t$ is a fixed positive integer.
\subsection{Definitions}\label{defs}
We recall the following definitions, which can be found in \cite{EC}. A \emph{partition} $\lambda=(\lambda_1,\lambda_2,\ldots, \lambda_\ell)$ of the integer $n \geq 0$ is a finite non-increasing sequence of positive integers whose sum is $n$. The $\lambda_i$'s are the \textit{parts} of $\lambda$, $\ell := \ell(\lambda)$ is its \textit{length}, and $n$ its \textit{weight}, denoted by $|\lambda|$. Each partition can be represented by its \textit{Ferrers diagram} as shown in Figure~\ref{fig1.1}, left. (Here we represent the Ferrers diagram in French convention.)

\begin{figure}[!h] \begin{center}
\includegraphics[scale=1.2]{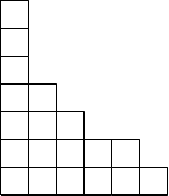}
\hspace*{1cm}
\includegraphics[scale= 1.2]{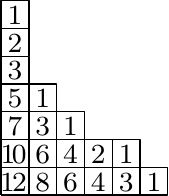}
\hspace*{1cm}
\includegraphics[scale=1.2]{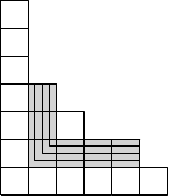}
\caption{\label{fig1.1} The Ferrers diagram of the partition $(6,5,3,2,1,1,1)$, the hook lengths, and a principal hook.}
\end{center}
\end{figure}

For each box $v=(i,j)$ in the Ferrers diagram of $\lambda$ (with $i \in \{1,\ldots, \ell\}$ and $j \in \{1,\ldots,\lambda_i \}$), we define the \emph{hook of $v$} as the set of boxes $u$ such that either $u$ lies on the same row and above $v$, or $u$ lies on the same column and on the right of $v$. The \emph{hook length} $h_v$ of $v$ is the cardinality of its hook (see Figure~\ref{fig1.1}, center).
The hook of $v$ is called \emph{principal} if $v=(i,i)$ (that is  $v$ lies on the diagonal of $\lambda$, see Figure~\ref{fig1.1}, right). The \emph{Durfee square} of $\lambda$ is the greatest square included in its Ferrers diagram, the length of its side is the \emph{Durfee length}, denoted by $D(\lambda)$: it is also the number of principal hooks. We denote by $\delta_\lambda$ the number $(-1)^{D(\lambda)}$. The \emph{hook length multi-set} of $\lambda$, denoted by $\mathcal{H}(\lambda)$, is the multi-set of all hook lengths of $\lambda$.
\begin{definition}Let $\lambda $ be a partition. We say that $\lambda$ is a \emph{ $t$-core} if and only if no hook length of $\lambda$ is a multiple of $t$. 
\end{definition}
Recall \cite[p. 468]{EC} that $\lambda$ is a $t$-core if and only if the hook length multi-set of $\lambda$ does not contain the integer $t$. We denote by $\mathcal{P}$ the set of partitions and by  $\mathcal{P}_{(t)}$ the subset of $t$-cores. We also recall that the definition of ribbons can be found in \cite{EC} (see Figure~\ref{fig2} for an example).
\begin{definition}Let  $\lambda $ be a partition. The \emph{$t$-core of $\lambda$} is the partition $T(\lambda)$ obtained from $\lambda$  by removing in its Ferrers diagram all the ribbons of length $t$, and by repeating this operation until we can not remove any such ribbon. 
\end{definition}

\begin{figure}[h!]
\begin{center}
\includegraphics[scale=1.2]{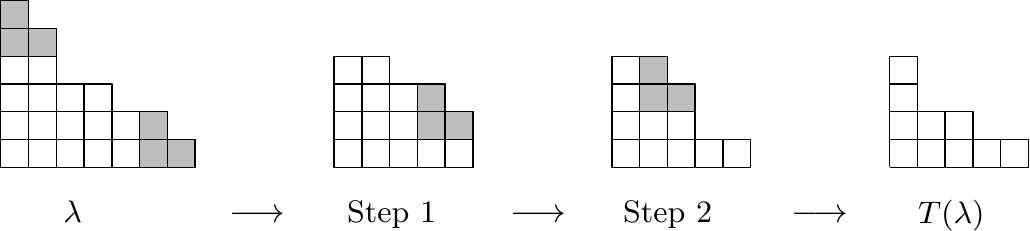}
\caption{\label{fig2} The construction of the $3$-core of the partition $\lambda=(7,6,4,2,2,1)$. In grey, the deleted ribbons.}
\end{center}
\end{figure}

 Note that $T(\lambda)$ does not depend on the order of removal (see \cite[p. 468]{EC} for a proof). In particular, as a ribbon of length $t$ corresponds bijectively to a box with hook length $t$, the $t$-core $T(\lambda)$ of a partition $\lambda$ is itself a $t$-core.
\subsection{A bijection between $t$-cores and vectors of integers}\label{bijphi}
We will need later restrictions of a bijection from \cite{GKS} to two subsets of $t$-cores. Here, we first recall this bijection. Let $\lambda$ be a $t$-core, we define the vector $ \phi(\lambda):=(n_0, n_1,\ldots, n_{t-1})$ as follows. We label the box $(i,j)$ of $\lambda$ by $(j-i)$ modulo $t$. We also label the boxes in the (infinite) column 0 in the same way, and we call the resulting diagram the \emph{extended t-residue diagram} (see Figure~\ref{fig3} below). A box is called \emph{exposed} if it is at the end of a row of the extended $t$-residue diagram. The set of boxes $(i,j)$ of the extended $t$-residue diagram satisfying $t(r-1)\leq j-i<tr$ is called a \emph{region} and labelled $r$. We define $n_i$ as the greatest integer $r$ such that the region labelled $r$ contains an exposed box with label $i$. 
\begin{figure}[h!]\begin{center}
\includegraphics[scale=1]{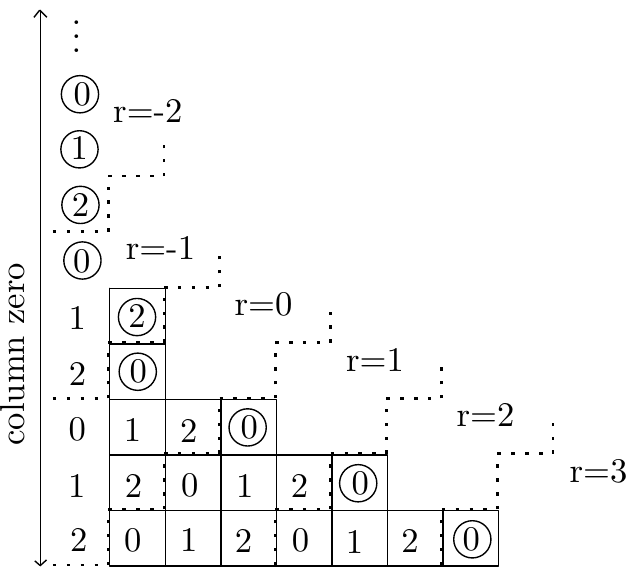}
\caption{\label{fig3}The extended 3-residue diagram of the $3$-core $\lambda=(7,5,3,1,1)$. The exposed boxes are circled.}
\end{center}
\end{figure}
\begin{Theorem}[\cite{GKS}]\label{GKS}
The map $\phi$ is a bijection between $t$-cores and vectors of integers $ {\bf n}=(n_0,n_1,\ldots, n_{t-1}) \in \mathbb{Z}^t$, satisfying $n_0+ \cdots +n_{t-1}=0$, such that: \begin{equation} |\lambda | = \frac{t\|{\bf n}\|^2}2 + {\bf b \cdot n}= \frac{t}{2}\sum_{i=0}^{t-1}n_i^2+ \sum_{i=0}^{t-1}in_i,\end{equation}
where~ ${\bf b } := (0,1,\ldots,t-1)$, $\|{\bf n}\|$ is the euclidean norm of ${\bf n}$, and ${\bf b \cdot n}$ is the scalar product of ${\bf b}$ and ${\bf n}$.
\end{Theorem}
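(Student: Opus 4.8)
The plan is to recast everything in terms of the bi-infinite boundary (edge) sequence of a partition, where the abacus structure makes both the bijectivity and the weight formula transparent. First I would encode a partition $\lambda$ by its boundary word $w=(w_k)_{k\in\mathbb{Z}}\in\{0,1\}^{\mathbb{Z}}$, obtained by reading the steps of the boundary of the Ferrers diagram, normalized so that $w_k=1$ for $k\ll 0$ and $w_k=0$ for $k\gg 0$; here a $1$ records a vertical step and a $0$ a horizontal step. In this language a cell of $\lambda$ corresponds to a pair of indices $a<b$ with $w_a=0$ and $w_b=1$, and its hook length equals $b-a$; in particular $|\lambda|$ is exactly the number of such inversions, the empty partition being the fully sorted word $\cdots 1110\cdots0\cdots$. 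Since, as recalled just after the definition of $t$-cores, $\lambda$ is a $t$-core if and only if no hook length equals $t$, the $t$-core condition translates into the clean statement: there is no index $a$ with $w_a=0$ and $w_{a+t}=1$.

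Next I would split $w$ into its $t$ arithmetic-progression subsequences (runners) $w^{(i)}:=(w_{i+tk})_{k\in\mathbb{Z}}$ for $i=0,\ldots,t-1$. The condition above says precisely that no runner contains a $0$ immediately preceding (within the runner) a $1$, that is, every runner has the sorted form $\cdots 1110\cdots0\cdots$, with all $1$'s preceding all $0$'s; such a runner is completely determined by the single position $m_i$ at which it switches from $1$ to $0$. Comparing with the geometric definition of $\phi$, I would then check that the regions of the extended $t$-residue diagram are exactly the runner-positions $k$, that the label of a box is its runner index $i$, and that the exposed boxes are the $1$'s. Consequently ``the greatest region containing an exposed box of label~$i$'' is exactly the last $1$ on runner~$i$, so that $n_i$ is the displacement of $m_i$ from its value $c_i$ for the empty partition. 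The balanced normalization of $w$ (the number of $1$'s to the right of the origin matching the number of $0$'s to the left) then forces $\sum_i n_i=0$, placing $\mathbf{n}$ in the type $A_{t-1}$ root lattice.

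Bijectivity is then immediate and the two maps are visibly mutually inverse: a $t$-core yields sorted runners, hence the vector $\mathbf{n}$; conversely any $\mathbf{n}$ with $\sum_i n_i=0$ prescribes a switch position $m_i=c_i+n_i$ on each runner, reassembling a unique sorted word $w$, which is a $t$-core by construction (and unsorting, i.e.\ sliding beads downward, reconstructs the Ferrers diagram). For the weight formula I would simply count the inversions of $w$ as a function of the switch positions $m_i=c_i+n_i$, separating the contribution of pairs lying on the same runner, which produces the quadratic part $\tfrac{t}{2}\sum_i n_i^2$, from the contribution of pairs on distinct runners together with the residue offsets $i$, which assembles into the linear term $\sum_i i\,n_i=\mathbf{b}\cdot\mathbf{n}$.

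The conceptual content is elementary once the encoding is fixed, and I expect the main obstacle to be purely the bookkeeping in the last two steps. Concretely, the delicate points are (i) pinning down the origin of the runner-positions so that the empty partition gives $\mathbf{n}=\mathbf{0}$ and the constraint reads exactly $\sum_i n_i=0$ rather than $\sum_i n_i=\mathrm{const}$, and (ii) carrying out the inversion count across distinct runners carefully enough to recover the \emph{exact} offset vector $\mathbf{b}=(0,1,\ldots,t-1)$ in the linear term. A convenient way to fix all normalizations at once is to verify the formula on the empty partition and on the $t$-cores obtained from it by a single elementary bead slide, which simultaneously determines the constants $c_i$ and the form of $\mathbf{b}$.
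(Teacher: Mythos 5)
The paper itself offers no proof of this theorem: it is imported verbatim from Garvan--Kim--Stanton, so there is no internal argument to compare against. Your abacus/runner strategy is exactly the standard route to this result, and most of it is sound: the boundary-word encoding, the translation of the $t$-core condition into ``every runner is sorted'', the identification of $n_i$ with the displacement of the switch position, the derivation of $\sum_i n_i=0$ from the balanced normalization, and the bijectivity argument are all correct.

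There is, however, one concrete error in the step that carries the real content of the theorem, namely the weight formula. You propose to split the inversion count into same-runner pairs (claimed to give $\tfrac{t}{2}\sum_i n_i^2$) and cross-runner pairs (claimed to give $\sum_i i\,n_i$). This decomposition cannot work: a same-runner inversion is precisely a cell whose hook length is a multiple of $t$, so for a $t$-core the same-runner contribution is identically zero --- that is exactly what ``every runner is sorted'' means. (Already for $t=2$ and $\lambda=(1)$ one has $\mathbf{n}=(1,-1)$, quadratic part $2$, linear part $-1$, total $1$, while the unique inversion is a cross-runner pair; note also that $\sum_i i\,n_i$ can be negative, so it cannot by itself be a count of pairs.) The entire weight comes from cross-runner inversions, and both the quadratic and the linear terms emerge together from that single count: for runners $i<j$ with switch displacements $n_i,n_j$, the inversions between them number $\binom{n_i-n_j}{2}_{+}+\binom{n_j-n_i+1}{2}_{+}$ (with the convention that an empty range contributes $0$), and summing over all pairs $i<j$ and simplifying with $\sum_i n_i=0$ yields $\tfrac{t}{2}\sum_i n_i^2+\sum_i i\,n_i$. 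So the plan is salvageable, but the bookkeeping step you flagged as delicate must be reorganized: there is nothing to separate off, and the computation you assign to the quadratic term would return $0$.
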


For example,  the $3$-core $\lambda=(7,5,3,1,1)$ of Figure~\ref{fig3} satisfies $\phi(\lambda)$ = $(3,-2,-1)$. We indeed have $7+5+3+1+1=17= |\lambda|= \frac{3}{2}(9+4+1)-2-2.$

\subsection{Self-conjugate $t$-cores}
Next we come to the definition of a subfamily of $\mathcal{P}_{(t)}$ which naturally appears in the proof of our type $\widetilde{C}$ formula. We define \emph{self-conjugate partitions} (respectively \emph{self-conjugate $t$-cores}) as elements $\lambda$ in $\mathcal{P}$ (respectively  $\mathcal{P}_{(t)}$) satisfying $\lambda=\lambda^*$, where $\lambda^*$ is the conjugate of $\lambda$ (see \cite[p. 287]{EC}).  We denote by $SC$ the set of self-conjugate partitions and by $SC_{(t)}$ its subset of $t$-cores. We also denote by $\lfloor t/2 \rfloor$ the greatest integer smaller or equal to $t/2$. 

\begin{figure}[h!]
\includegraphics[scale=1.2]{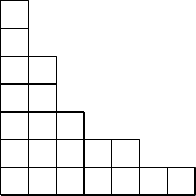}
\caption{\label{fig1}The self-conjugate partition $\lambda=(7,5,3,2,2,1,1)$.}
\end{figure}

\begin{proposition}\label{autoconjugue} There is a bijection $\phi_1$ between the partitions $\lambda \in SC_{(t)}$ and vectors of integers $ \phi_1(\lambda):={\bf n}\in \mathbb{Z}^{\lfloor t/2 \rfloor},$ such that: 
\begin{equation} \label{eqlambda}|\lambda | = t\|{\bf n}\|^2 + {\bf c} \cdot {\bf n} , \quad \mbox{with~}
{\bf c } :=  \left\lbrace \begin{array}{lr}
    (1,3,\ldots,t-1) &\mbox{for $t$ even}, 
\vspace*{0.05cm}\\ (2,4,\ldots,t-1)&\mbox{for $t$ odd}.
  \end{array}\right. \end{equation}
\end{proposition}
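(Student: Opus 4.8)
The plan is to realize the self-conjugate $t$-cores as exactly the fixed points of conjugation inside the bijection $\phi$ of Theorem~\ref{GKS}, and then to parametrize those fixed points by half of the coordinates. Writing $\phi(\lambda)=(n_0,\dots,n_{t-1})$, the central point is to understand how $\phi$ interacts with conjugation, after which both the dimension count and the weight formula follow by direct substitution into Theorem~\ref{GKS}.

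First I would establish the conjugation rule
\[
\phi(\lambda^*)=(-n_{t-1},-n_{t-2},\dots,-n_0),
\]
i.e. conjugation reverses the coordinate order and flips all signs. To prove this I would track the effect of conjugation on the extended $t$-residue diagram. Conjugation reflects the Ferrers diagram across the main diagonal, sending a box $(i,j)$ to $(j,i)$, so its label $(j-i)\bmod t$ becomes $(i-j)\equiv-(j-i)\bmod t$; thus the label $i$ goes to $t-i\bmod t$ (with $0$ fixed), and the region index $r$ defined by $t(r-1)\le j-i<tr$ is essentially reflected, $r\leftrightarrow 1-r$. The genuinely delicate point, and the one I expect to be the main obstacle, is matching the exposed boxes (ends of rows) of $\lambda$ with those of $\lambda^*$ while keeping careful track of the infinite column $0$ and of the half-open region boundaries; carrying this bookkeeping through shows that the largest region carrying an exposed box of a given label transforms into the negative of the corresponding quantity at the reflected label, which gives the displayed formula. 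This lemma can alternatively be quoted from \cite{GKS}.

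Granting this, $\lambda$ is self-conjugate iff $\phi(\lambda)=\phi(\lambda^*)$, i.e. iff $n_i=-n_{t-1-i}$ for all $i$. I would then count free coordinates. For $t=2m$ the indices pair off as $\{i,t-1-i\}$ with no fixed index, so the upper half $(n_m,\dots,n_{2m-1})\in\mathbb{Z}^m$ is free and determines the rest via $n_{m-1-k}=-n_{m+k}$; the constraint $\sum_i n_i=0$ then holds automatically. For $t=2m+1$ the middle index $i=m$ satisfies $t-1-i=i$, forcing $n_m=0$, and $(n_{m+1},\dots,n_{2m})\in\mathbb{Z}^m$ is free. In both cases the number of free coordinates is $\lfloor t/2\rfloor$, and I define $\phi_1(\lambda):={\bf n}$ to be this tuple; it is a bijection onto $\mathbb{Z}^{\lfloor t/2\rfloor}$ because $\phi$ is a bijection and the relations above reconstruct all coordinates explicitly.

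It remains to rewrite $|\lambda|$. Substituting $n_{t-1-i}=-n_i$ into the weight formula of Theorem~\ref{GKS}, the quadratic part gives $\frac{t}{2}\sum_{i=0}^{t-1}n_i^2=\frac{t}{2}\cdot2\|{\bf n}\|^2=t\|{\bf n}\|^2$, since each free coordinate is counted twice and the middle coordinate (for $t$ odd) contributes $0$. For the linear part I would pair the terms $i$ and $t-1-i$, each pair contributing $\big(i-(t-1-i)\big)n_i=(2i-t+1)n_i$; re-expressing this in the chosen free coordinates yields coefficients $2k+1$ for $t$ even and $2k+2$ for $t$ odd, $k=0,\dots,\lfloor t/2\rfloor-1$. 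These are exactly the entries of ${\bf c}=(1,3,\dots,t-1)$ and ${\bf c}=(2,4,\dots,t-1)$ respectively, so $|\lambda|=t\|{\bf n}\|^2+{\bf c}\cdot{\bf n}$, completing the proof.
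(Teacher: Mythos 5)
Your proof is correct and takes essentially the same route as the paper, which simply defines $\phi_1(\lambda)$ to be the last $\lfloor t/2\rfloor$ coordinates of $\phi(\lambda)$ and cites \cite{GKS} for the conjugation symmetry $n_i=-n_{t-1-i}$ (your rule $\phi(\lambda^*)=(-n_{t-1},\ldots,-n_0)$ checks out, e.g.\ on $\lambda=(7,5,3,2,2,1,1)$ with $\phi(\lambda)=(3,0,-3)$), and your substitution into Theorem~\ref{GKS} correctly produces the coefficients $(1,3,\ldots,t-1)$ and $(2,4,\ldots,t-1)$. The only soft spot is the admittedly heuristic sketch of the conjugation rule itself, but since you offer to quote it from \cite{GKS} --- exactly what the paper does --- there is no gap.
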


This result is a direct consequence of \cite[Equation (7.4)]{GKS}, by defining the image of a self-conjugate $t$-core $\lambda$ under $\phi_1$ as the vector whose components are the $\lfloor t/2 \rfloor$  last ones of $\phi(\lambda)$. 

For example, the self-conjugate $3$-core $\lambda$ of Figure~\ref{fig1} satisfies $\phi(\lambda)=(3,0,-3)$; therefore its image under $\phi_1$ is the vector $(-3)$.

\subsection{$t$-cores of doubled distinct partitions}\label{propdd} We will also need a second subfamily of $\mathcal{P}_{(t)}$ in our proof of Theorem~\ref{theoremeintro}. Let $\mu^0 $ be a partition with distinct parts. We denote by S($\mu^0$) the shifted Ferrers diagram of $\mu^0$, which is its Ferrers diagram where for all $1\leq i \leq \ell(\mu^0)$, the $i^{th}$ row is shifted by $i$ to the right (see Figure~\ref{fig4} below). 

\begin{definition}[\cite{GKS}] We define the \emph{doubled distinct partition} $\mu$ of $\mu^0$ as the partition whose Ferrers diagram is obtained by adding $\mu^0_i$ boxes to the  $i^{th}$ column of S($\mu^0$) for all $1\leq i \leq \ell(\mu^0)$. We denote by $DD$ the set of doubled distinct partitions and by $DD_{(t)}$ its subset of $t$-cores. 
\end{definition}

\begin{figure}[h!]\begin{center}
\includegraphics[scale=1.2]{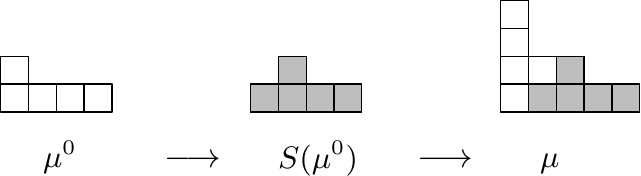}
\caption{\label{fig4}The construction of the doubled distinct partition $\mu =(5,3,1,1)$, for  $\mu^0=(4,1)$.}\end{center}
\end{figure}

Notice that the set $DD$ can also be defined as the set of partitions $\mu =(\mu_1, \ldots, \mu_\ell)$ such that $\mu_i = \mu_i^*+1$ for all $i \in \{1,\ldots,  D(\mu)\}$. Moreover, by definition and direct computations, a doubled distinct partition $\mu$ satisfies the following properties, illustrated in Figure~\ref{fig5.0}, and which will be useful later in Theorem~\ref{bijcouple} and Lemma~\ref{littlewood}:
\begin{itemize}
\item if $1\leq i, j \leq D(\mu)$, then the boxes $(i,j)$ and $(j,i)$ have the same hook length,
\item if $1\leq i \leq D(\mu)$, then the hook length of the box $(i,i)$ is twice the hook length of $(i, D(\mu)+1)$,
\item if $D(\mu)+1 \leq i$ and $1\leq j \leq D(\mu)$, then the boxes $(i,j)$ and $(j, i+1)$ have the same hook length.
\end{itemize}
\begin{figure}[h!]
\includegraphics[scale=1.2]{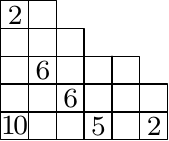}
\caption{\label{fig5.0}Illustration for $\mu=(6,6,5,3,2)$.}
\end{figure}

\begin{proposition}\label{DD}There is a bijection $\phi_2$ between the partitions $\mu \in DD_{(t)}$ and vectors of integers $ \phi_2(\mu) :={\bf n}\in \mathbb{Z}^{\lfloor (t-1)/2 \rfloor},$ such that:  \begin{equation}\label{eqmu} |\mu | =t\|{\bf n}\|^2 + {\bf d}\cdot {\bf n} ,
\quad \mbox{with~} {\bf d}:=\left\lbrace\begin{array}{ll}
(2,4,\ldots,t-2)&\mbox{for $t$ even},\vspace*{0.05cm}
\\ (1,3,\ldots,t-2)&\mbox{for $t$ odd}.\end{array}\right.\end{equation}
\end{proposition}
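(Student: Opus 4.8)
The plan is to deduce Proposition~\ref{DD} from the master bijection $\phi$ of Theorem~\ref{GKS}, exactly in the spirit of the proof sketched for Proposition~\ref{autoconjugue}, but now keeping track of the extra combinatorial constraint that distinguishes doubled distinct $t$-cores from self-conjugate ones. First I would recall the defining feature of a doubled distinct partition $\mu$: by the characterization noted just before the statement, $\mu_i=\mu_i^*+1$ for $1\le i\le D(\mu)$, so $\mu$ is \emph{almost} self-conjugate, its Ferrers diagram being symmetric except for the extra box sitting on each principal hook just off the diagonal. The aim is to understand how this near-symmetry is reflected in the coordinates of the vector $\phi(\mu)=(n_0,n_1,\dots,n_{t-1})\in\mathbb{Z}^t$ produced by the extended $t$-residue diagram.

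The key step is to identify the symmetry that the reflection $\lambda\mapsto\lambda^*$ induces on the vectors $\mathbf{n}$. Since conjugating a partition amounts to transposing its Ferrers diagram, exposed boxes with label $i$ in region $r$ are exchanged with exposed boxes carrying a complementary label in a reflected region; concretely one expects a relation of the form $n_i\mapsto -n_{t-1-i}$ (up to an index shift depending on the parity of $t$), which is precisely the content of \cite[Equation (7.4)]{GKS} invoked for the self-conjugate case. For doubled distinct partitions the single off-diagonal box on each principal hook breaks this antisymmetry in a controlled way, forcing the \emph{middle} coordinate (when $t$ is odd) or a pair of central coordinates (when $t$ is even) to satisfy a fixed value rather than being free, and leaving exactly $\lfloor(t-1)/2\rfloor$ independent coordinates. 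I would therefore define $\phi_2(\mu)$ to be the tuple consisting of those $\lfloor(t-1)/2\rfloor$ genuinely free components of $\phi(\mu)$ (the last ones, matching the convention used for $\phi_1$), and check that this assignment is a bijection onto $\mathbb{Z}^{\lfloor(t-1)/2\rfloor}$ by exhibiting the inverse: given $\mathbf{n}$, reconstruct the full vector in $\mathbb{Z}^t$ by imposing the antisymmetry together with the prescribed central value, then apply $\phi^{-1}$.

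Finally I would derive the weight formula~\eqref{eqmu}. Starting from the quadratic expression $|\lambda|=\tfrac{t}{2}\sum_i n_i^2+\sum_i i\,n_i$ of Theorem~\ref{GKS}, I substitute the antisymmetry $n_i=-n_{t-1-i}$ and the fixed central coordinate. The sum of squares $\sum_i n_i^2$ collapses to $2\|\mathbf{n}\|^2$ (each free coordinate and its mirror contributing equally, the central term contributing a constant that is absorbed), while the linear term $\sum_i i\,n_i$ simplifies, after pairing $i$ with $t-1-i$, to the scalar product $\mathbf{d}\cdot\mathbf{n}$ with the stated vector $\mathbf{d}=(2,4,\dots,t-2)$ for $t$ even and $(1,3,\dots,t-2)$ for $t$ odd. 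The main obstacle I anticipate is not the algebra but pinning down precisely how the lone off-diagonal boxes perturb the labelling of exposed boxes: one must verify carefully that the doubled-distinct condition corresponds to fixing the central coordinate(s) of $\phi(\mu)$ to the exact value that makes both the parity bookkeeping and the constant shift in~\eqref{eqmu} come out right, and that this is consistent with \cite[Equation (7.4)]{GKS}. Once that correspondence is established, the bijectivity and the weight identity follow by the same direct computation as in the self-conjugate case.
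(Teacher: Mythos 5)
Your overall strategy is exactly the paper's: both proofs reduce Proposition~\ref{DD} to the master bijection $\phi$ of Theorem~\ref{GKS} (the paper simply invokes \cite[Bijection~4]{GKS}), define $\phi_2(\mu)$ as the last $\lfloor(t-1)/2\rfloor$ coordinates of $\phi(\mu)$, and obtain \eqref{eqmu} by substituting the symmetry relations satisfied by $\phi(\mu)$ into $|\mu|=\tfrac{t}{2}\sum_i n_i^2+\sum_i i\,n_i$. The one substantive ingredient, however, is the precise form of those relations, and the version you posit is not the right one. For $\mu\in DD_{(t)}$ the correct constraints (from \cite[Bijection~4]{GKS}) are
\begin{equation*}
n_0=0,\qquad n_i=-\,n_{t-i}\quad(1\le i\le t-1),
\end{equation*}
i.e.\ the antisymmetry is taken with respect to the involution $i\mapsto t-i$ on residues mod $t$, whose fixed points are $i=0$ (and $i=t/2$ when $t$ is even), and the forced coordinates are forced to the value $0$ --- not a ``middle coordinate set to some fixed value'' perturbing the self-conjugate involution $i\mapsto t-1-i$. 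This distinction is not cosmetic: pairing $j$ with $t-j$ gives the linear contribution $j\,n_j+(t-j)n_{t-j}=(2j-t)\,n_j$, which for $j$ in the upper range produces exactly $\mathbf{d}=(1,3,\ldots,t-2)$ ($t$ odd) or $(2,4,\ldots,t-2)$ ($t$ even); had the pairing been $j\leftrightarrow t-1-j$ as in your sketch, you would instead recover the self-conjugate vector $\mathbf{c}$ of Proposition~\ref{autoconjugue}. The fact that the forced value is $0$ (rather than a nonzero constant ``absorbed'' somewhere) is also what guarantees that \eqref{eqmu} has no constant term and that the sum-zero condition of Theorem~\ref{GKS} is automatic, so the image is all of $\mathbb{Z}^{\lfloor(t-1)/2\rfloor}$. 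You can sanity-check all of this on the paper's example $\mu=(5,3,1,1)$, $t=3$: $\phi(\mu)=(0,2,-2)$, so $n_1=-n_2$, $\phi_2(\mu)=(-2)$ and $10=3\cdot4+1\cdot(-2)$. (Note the paper's own sentence states $n_i=n_{t-i}$; the sign must be negative for the computation and the example to be consistent.) With the relations corrected, the rest of your argument goes through verbatim and coincides with the paper's.
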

Again, Proposition~\ref{DD} is a direct consequence of~\cite[Bijection 4]{GKS}, by defining the image of a doubled distinct $t$-core $\mu$ under $\phi_2$ as the vector whose components are the $\lfloor (t-1)/2 \rfloor$ last ones of $\phi(\mu)$. Equation \eqref{eqmu} comes from the relationships $n_0=0$, and $n_i=n_{t-i}$ for $1\leq i \leq t-1$ in~\cite[Bijection 4]{GKS}.

For example, the doubled distinct $3$-core  $\mu=(5,3,1,1)$ of Figure~\ref{fig4}, right,  satisfies $\phi(\mu)=(0,2,-2)$; so its image under $\phi_2$ is the vector $(-2)$.

\subsection{Generating function of $SC_{(t)} \times DD_{(t)}$}
We will now focus on pairs of $t$-cores in the set $SC_{(t)} \times DD_{(t)}$. We can in particular compute the generating function of these objects. Let  $(\lambda,\mu)$ be an element of $SC_{(t)} \times DD_{(t)}$. We define the weight of $(\lambda,\mu)$ as
$ |\lambda|+|\mu|$, and we denote by $ h_t$ the generating function
\begin{equation}h_t(q) :=\sum_{(\lambda,\mu) \in SC_{(t)} \times DD_{(t)}} q^{|\lambda|+|\mu|}.\end{equation}

We would like to mention that the first step towards discovering Theorem~\ref{theoremeintro} was the computation of the Taylor expansion of $h_3(q)$, whose first terms seemed to coincide with the ones in the generating function of the vectors of integers involved in $\eqref{equaC}$ for $t=2$.
\begin{proposition}\label{propgeneratingfunction}
The following equality holds for any integer $t\geq 1$ and any complex number $q$ satisfying $|q|<1$:
\begin{equation}\label{generatingfunction}
h_{t}(q) =\frac{(q^2;q^2)_\infty}{(q;q)_\infty} (q^{t};q^{t})_\infty (q^{2t};q^{2t})^{t-2}_\infty,
\end{equation}
where $\displaystyle(a;q)_\infty := \prod_{j \geq 1} (1-aq^{j-1})$ is the usual infinite $q$-rising factorial.
\end{proposition}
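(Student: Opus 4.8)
The plan is to compute the generating function $h_t(q)$ by using the two bijections $\phi_1$ and $\phi_2$ to translate the sum over pairs of $t$-cores into a sum over lattice vectors, and then to identify the resulting theta-like series as an infinite product via Jacobi's triple product identity. Since $h_t$ factors as a product, I would first observe that
\begin{equation}
h_t(q)=\Bigl(\sum_{\lambda\in SC_{(t)}}q^{|\lambda|}\Bigr)\Bigl(\sum_{\mu\in DD_{(t)}}q^{|\mu|}\Bigr),
\end{equation}
so it suffices to compute each factor separately and multiply. By Proposition~\ref{autoconjugue}, the first factor equals $\sum_{\mathbf n\in\mathbb Z^{\lfloor t/2\rfloor}}q^{t\|\mathbf n\|^2+\mathbf c\cdot\mathbf n}$, and since the exponent splits as a sum over coordinates, this series itself factors as a product of $\lfloor t/2\rfloor$ one-dimensional sums $\sum_{n\in\mathbb Z}q^{tn^2+c_i n}$, one for each entry $c_i$ of $\mathbf c$. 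Likewise, by Proposition~\ref{DD}, the second factor is $\prod_i\sum_{n\in\mathbb Z}q^{tn^2+d_i n}$ over the $\lfloor(t-1)/2\rfloor$ entries $d_i$ of $\mathbf d$.

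The key step is to evaluate each one-dimensional sum $\sum_{n\in\mathbb Z}q^{tn^2+cn}$ in product form. Here I would invoke the Jacobi triple product identity, which gives
\begin{equation}
\sum_{n\in\mathbb Z}q^{tn^2+cn}=(q^{2t};q^{2t})_\infty\,(-q^{t+c};q^{2t})_\infty\,(-q^{t-c};q^{2t})_\infty,
\end{equation}
obtained by completing the square $tn^2+cn$ and matching with the standard form $\sum_n z^n w^{n^2}$. The next step is purely bookkeeping: the vectors $\mathbf c$ and $\mathbf d$ run through the arithmetic progressions given explicitly in \eqref{eqlambda} and \eqref{eqmu}, and I expect that the exponents $t\pm c_i$ and $t\pm d_i$, as $i$ ranges over both vectors, sweep out a complete residue system that allows the individual $(-q^{\,\cdot};q^{2t})_\infty$ factors to telescope into the claimed ratio. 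I would handle the cases $t$ even and $t$ odd separately at this stage, since the entries of $\mathbf c$ and $\mathbf d$ have different parities and lengths in the two cases, but verify that the final product is uniform in $t$.

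The main obstacle I anticipate is precisely this last consolidation: collecting the $\lfloor t/2\rfloor+\lfloor(t-1)/2\rfloor=t-1$ triple-product expansions, tracking all the $(-q^{a};q^{2t})_\infty$ factors and the $(q^{2t};q^{2t})_\infty^{\,t-1}$ prefactor, and simplifying the union of the resulting residue classes into the compact form $\frac{(q^2;q^2)_\infty}{(q;q)_\infty}(q^{t};q^{t})_\infty(q^{2t};q^{2t})_\infty^{t-2}$. The likely mechanism is that the $(-q^{a};q^{2t})_\infty$ factors for the various $a$ recombine, via the elementary identity $(-q;q)_\infty=(q^2;q^2)_\infty/(q;q)_\infty$ applied at level $t$, into the factor $\frac{(q^2;q^2)_\infty}{(q;q)_\infty}(q^t;q^t)_\infty$, while the $t-1$ copies of $(q^{2t};q^{2t})_\infty$ lose exactly one copy to this recombination, leaving the exponent $t-2$. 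To guard against sign and indexing errors in the case analysis, I would independently confirm the formula against the Taylor expansion of $h_3(q)$ already computed in the text, and check the small cases $t=1,2$ by hand.
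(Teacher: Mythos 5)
Your proposal is correct, but it takes a more self-contained route than the paper. The paper's proof simply quotes from \cite{GKS} the known \emph{product} formulas for the generating functions of $SC_{(t)}$ and $DD_{(t)}$ (e.g., for $t$ odd, $\sum_{\lambda\in SC_{(t)}}q^{|\lambda|}=(-q;q^2)_\infty(q^{2t};q^{2t})_\infty^{(t-1)/2}/(-q^t;q^{2t})_\infty$ and its $DD$ companion), multiplies them, and finishes with $(-q^t;q^{2t})_\infty(-q^{2t};q^{2t})_\infty=(-q^t;q^t)_\infty$. You instead re-derive those product formulas from Propositions~\ref{autoconjugue} and \ref{DD}: the lattice sum splits coordinatewise into one-dimensional theta series $\sum_n q^{tn^2+cn}$, each evaluated by Jacobi's triple product, and the bookkeeping you anticipate does work out --- for $t$ odd the exponents $t\pm c_i$ sweep out all odd residues modulo $2t$ except $t$, and the $t\pm d_i$ all even residues except $0$, so the aggregate is $(q^{2t};q^{2t})_\infty^{t-1}\,(-q;q)_\infty/(-q^t;q^t)_\infty$, which collapses to the stated right-hand side exactly by the mechanism you describe (and similarly for $t$ even). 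Your only loose phrase is ``complete residue system'': the classes $t$ and $0$ modulo $2t$ are missing, and it is precisely their absence that produces the denominator $(-q^t;q^t)_\infty$ and hence the exponent $t-2$. What your approach buys is independence from the product formulas of \cite{GKS} (you only need the bijections, which the paper states anyway); what the paper's approach buys is brevity, at the cost of importing two more external identities.
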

\begin{proof} The generating functions of $SC_{(t)}$ and $DD_{(t)}$ are already known (see~\cite{GKS}), but both have a different expression according to the parity of $t$. We assume that $t$ is odd.
The generating function of self-conjugate $t$-cores is then:
\begin{equation*}
\sum_{\lambda \in SC_{(t)}} q^{|\lambda|}= \frac{(-q;q^2)_\infty(q^{2t};q^{2t})^{(t-1)/2}_\infty}{(-q^{t};q^{2t})_\infty},
\end{equation*}
while the generating function of doubled distinct $t$-cores is
\begin{equation*}
\sum_{\mu \in DD_{(t)}} q^{|\mu|}= \frac{(-q^2;q^2)_\infty(q^{2t};q^{2t})^{(t-1)/2}_\infty}{(-q^{2t};q^{2t})_\infty}.
\end{equation*}
Thanks to the classical equality $(-q^{t};q^{2t})_\infty(-q^{2t};q^{2t})_\infty=(-q^{t};q^{t})_\infty$, multiplying these two generating functions gives:
\begin{eqnarray*}
\displaystyle h_{t}(q) &=(-q;q)_\infty  \frac{\displaystyle(q^{2t};q^{2t})_\infty}{\displaystyle(-q^{t};q^{t})_\infty}(q^{2t};q^{2t})^{t-2}_\infty
\\&=\frac{\displaystyle(q^2;q^2)_\infty}{\displaystyle(q;q)_\infty} (q^{t};q^{t})_\infty (q^{2t};q^{2t})^{t-2}_\infty.
\end{eqnarray*}

As the computation of the case $t$ even is almost identical, we do not write it here.
  \end{proof}

\section{A Nekrasov-Okounkov type formula in type $\widetilde{C}$}\label{section2} 
The goal of this section is to prove Theorem~\ref{theoremeintro}. The global strategy is the following: we start from the Macdonald formula \eqref{equaC} in type $\widetilde{C}_t$, in which we replace the sum over vectors of integers by a sum over pairs of $t+1$-cores, the first in $SC_{(t+1)}$, and the second in $DD_{(t+1)}$. To do this, we need a new bijection $\varphi$ satisfying some properties that we will explain. This will allow us to establish Theorem~\ref{thmprincipal} of Section~\ref{section3.2} below for all integers $t \geq 2$. An argument of polynomiality will then enable us to extend this theorem to any complex number $t$. Then, a natural bijection between pairs $(\lambda,\mu)$ in $SC \times DD$, and doubled distinct partitions (with weight equal to $|\lambda|+|\mu|$) will allow us to conclude. Note that at this final step, the partitions need not be $t+1$-cores any more.

\subsection{The bijection $\varphi$}\label{section3.1}
In what follows, we assume that $t\geq 2$ is an integer.
\begin{definition}\label{deltai} If  $(\lambda, \mu)$ is a pair belonging to $SC_{(t+1)} \times DD_{(t+1)}$, we denote by $\Delta$ the \emph{set of principal hook lengths} of $\lambda$ and $\mu$, and for all $ i \in \{1,\ldots, t\}$ we define
\begin{equation}\Delta_i:= Max\left(\{ h \in \Delta , h \equiv \pm i -t-1 \mbox{~mod~}2t+2 \} \cup \{i-t-1\}\right).
\end{equation}
\end{definition}
For example, for $\lambda= (7,5,3,2,2,1,1)$, $\mu = (5,3,1,1)$ and $t+1=3$, we have $\Delta=\{13,8,7,2,1\}$, $\Delta_1=8$, and $
\Delta_2 =13$ (see Figure~\ref{fig5}).
\begin{figure}[h!]\begin{center}
\includegraphics[scale=1.2]{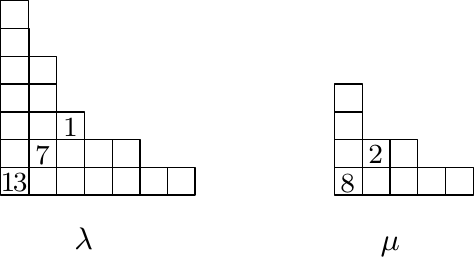}
\caption{\label{fig5}Computation of $\Delta$, $\Delta_1$ and $\Delta_2$ for a $(\lambda, \mu) \in SC_{(3)} \times DD_{(3)}$.}\end{center}
\end{figure}

As $\lambda$ (respectively $\mu$) is self-conjugate (respectively doubled distinct), all of its principal hook lengths are odd (respectively even).  The knowledge of the set $\Delta$ enables us to reconstruct uniquely both partitions $\lambda$ and $\mu$. The following theorem shows that in fact, when these two partitions are $t+1$-cores, it is enough to know the numbers $\Delta_i$ to recover $\lambda$ and $\mu$ (so knowing the hook length maxima in each congruency class modulo $2t+2$ is enough).

Recall the bijections $\phi_1$ and $\phi_2$ defined in Propositions~\ref{autoconjugue} and \ref{DD}, and set $(\lambda, \mu)$ a pair belonging to $SC_{(t+1)} \times DD_{(t+1)}$. We define $\varphi(\lambda,\mu):={\bf n}=(n_1,\ldots, n_t) \in \mathbb{Z}^t $ as follows:
 \begin{itemize}
 \item if $(t+1)$ is odd, then $n_{2i}$ (respectively  $n_{2i+1}$) is the $i^{th}$ component of $\phi_1(\lambda)$ (respectively $\phi_2(\mu)$);
 \item if $(t+1)$ is even, $n_{2i}$ (respectively $n_{2i+1}$) is the $i^{th}$component of $\phi_2(\mu)$ (respectively $\phi_1(\lambda)$) .
 \end{itemize}
 
\begin{Theorem}\label{phi} Let $t\geq 2$ be an integer and set ${\bf e}:=(1,2,\ldots, t)$. 
The map $\varphi$ between $SC_{(t+1)} \times DD_{(t+1)}$ and $ \mathbb{Z}^t$ is a bijection such that $\varphi(\lambda,\mu):={\bf n}=(n_1,\ldots, n_t)$ satisfies:
\begin{equation}\label{eqlambdamu} |\lambda|+|\mu|= (t+1) \|{\bf n}\| ^2 +{\bf e} \cdot {\bf n} .\end{equation}
Besides, the following relation holds for all integers $i \in \{1,\ldots, t\}$: 
\begin{equation}\label{liennideltai}t+1+\Delta_i= \sigma_i ((2t+2)n_i+i),\end{equation}
where $\sigma_i$ is equal to $1 $ (respectively $-1$) if $n_i \geq 0$ (respectively $n_i<0$).
\end{Theorem}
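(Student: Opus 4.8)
The three assertions are of unequal depth: that $\varphi$ is a bijection and that \eqref{eqlambdamu} holds are formal consequences of Propositions~\ref{autoconjugue} and~\ref{DD} (applied with core parameter $t+1$), whereas \eqref{liennideltai} carries the real content. I would dispatch the first two at once. Since $\phi_1$ and $\phi_2$ are bijections onto $\mathbb{Z}^{\lfloor(t+1)/2\rfloor}$ and $\mathbb{Z}^{\lfloor t/2\rfloor}$, and since $\lfloor(t+1)/2\rfloor+\lfloor t/2\rfloor=t$, the interleaving rule defining $\varphi$ merely shuffles the two output vectors into one vector of $\mathbb{Z}^t$; hence $\varphi$ is a bijection, with inverse ``de-interleave the coordinates, then apply $\phi_1^{-1}$ and $\phi_2^{-1}$''. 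For \eqref{eqlambdamu}, note that the coordinates of ${\bf n}=\varphi(\lambda,\mu)$ are, as a multiset, the disjoint union of those of $\phi_1(\lambda)$ and $\phi_2(\mu)$, so $\|{\bf n}\|^2=\|\phi_1(\lambda)\|^2+\|\phi_2(\mu)\|^2$ and the quadratic terms of \eqref{eqlambda} and \eqref{eqmu} combine to $(t+1)\|{\bf n}\|^2$. The interleaving was chosen exactly so that the linear parts match: when $t+1$ is even (resp. odd) the coordinates of $\phi_1(\lambda)$ sit in the odd (resp. even) positions of ${\bf n}$, where the coefficient vector ${\bf e}=(1,2,\dots,t)$ restricts to the odd (resp. even) numbers, which is precisely the vector ${\bf c}$ of Proposition~\ref{autoconjugue}; symmetrically the coordinates of $\phi_2(\mu)$ meet the vector ${\bf d}$ of Proposition~\ref{DD}. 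Thus ${\bf e}\cdot{\bf n}={\bf c}\cdot\phi_1(\lambda)+{\bf d}\cdot\phi_2(\mu)$, and adding this to the quadratic terms yields \eqref{eqlambdamu}.

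For \eqref{liennideltai} I would first split the problem along parities. As $2t+2$ is even, any integer $h\equiv\pm i-(t+1)\pmod{2t+2}$ has the parity of $i+(t+1)$; since the principal hook lengths of the self-conjugate $\lambda$ are all odd and those of the doubled distinct $\mu$ are all even, $\Delta_i$ is governed by $\lambda$ exactly when $i+(t+1)$ is odd and by $\mu$ exactly when it is even. This dichotomy is identical to the one in the definition of $\varphi$, so that $\Delta_i$ is controlled by $\lambda$ precisely when $n_i$ is a coordinate of $\phi_1(\lambda)$, and by $\mu$ precisely when $n_i$ is a coordinate of $\phi_2(\mu)$. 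It therefore suffices to prove \eqref{liennideltai} separately for a self-conjugate $(t+1)$-core (odd hooks) and for a doubled distinct $(t+1)$-core (even hooks).

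The key lemma is a hook-length reading of the bijection $\phi$ of Theorem~\ref{GKS}. For a self-conjugate $\lambda$ the principal hook length at the diagonal box $(a,a)$ equals $2(\lambda_a-a)+1$, so the principal hooks are in order-preserving bijection with the nonnegative row-end contents $\lambda_a-a$; for a doubled distinct $\mu$, using $\mu_a=\mu_a^*+1$, the same box contributes $2(\mu_a-a)$. On the other hand, an exposed box at the end of a row in the extended $(t+1)$-residue diagram has content $\lambda_a-a$, its label is this content modulo $t+1$, and by definition $n_i$ is the largest region index attained by an exposed box of label $i$ --- that is, an affine function of the largest row-end content in a fixed class modulo $t+1$. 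Combining these two readings (via $h=2c+1$, resp. $h=2c$) turns ``largest content in a class mod $t+1$'' into ``largest principal hook in a class mod $2t+2$''; the self-conjugacy of $\lambda$ (resp. the doubling of $\mu$) folds the classes $i$ and $-i$ together, which is the source of the $\pm i$ in Definition~\ref{deltai}, and the sign $\sigma_i$ records whether $n_i\geq 0$, equivalently whether the class genuinely contains a principal hook or collapses to the default value $i-t-1$ (the case $n_i=0$). Substituting then gives $t+1+\Delta_i=\sigma_i((2t+2)n_i+i)$.

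I expect the only real difficulty to be this last lemma: making the passage from the exposed-box description of $n_i$ to the maximal principal hook fully rigorous, namely pinning down the affine shift between ``region index'' and ``content'', checking that $\sigma_i$ and the $\pm$-folding from self-conjugacy (resp. doubling) align the coordinate $n_i$ with the congruence class $\equiv\pm i-(t+1)$ rather than a single class, and treating the degenerate class $n_i=0$ correctly. Once this dictionary is in place, \eqref{liennideltai} is a direct substitution, and the running example $\varphi\big((7,5,3,2,2,1,1),(5,3,1,1)\big)=(-2,-3)$ with $\Delta_1=8$ and $\Delta_2=13$ already exhibits every feature of the argument.
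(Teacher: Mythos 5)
Your proposal is correct and follows essentially the same route as the paper: the bijectivity and the weight identity \eqref{eqlambdamu} are dispatched exactly as you say by concatenating $\phi_1$ and $\phi_2$ and summing \eqref{eqlambda} and \eqref{eqmu}, and your ``key lemma'' (exposed-box content $\mapsto$ maximal principal hook via $h=2c+1$ for $SC$ and $h=2c$ for $DD$, with the $\pm i$ folding coming from $n'_i=-n'_{t+1-i}$ and the degenerate class when $n_i=0$) is precisely the computation the paper carries out, organized there as a twelve-fold case analysis on the parity of $t+1$ and $i$ and the sign of $n_i$.
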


For example, the pair of $3$-cores  ($\lambda,\mu$) of Figure~\ref{fig5} satisfies $\varphi(\lambda,\mu)=(-2,-3)$. We have $31 =  |\lambda|+|\mu| =3(4+9)+1(-2)+2(-3)$.
Moreover, $\Delta_1=8$, $
\Delta_2 =13$. We verify that $3+\Delta_1=11=-\left(6n_1+1\right)$, and
$3+\Delta_2=16=-\left(6n_2+2 \right).$

\begin{proof}
From its definition, it is obvious that $\varphi$ is a bijection, as the concatenation of two bijections. To establish \eqref{eqlambdamu}, it is sufficient to sum \eqref{eqlambda} and \eqref{eqmu} in which we replace $t$ by $t+1$. It only remains to prove \eqref{liennideltai}, which is the difficult point of our theorem.

Let $(\lambda, \mu)$ be a pair in $SC_{(t+1)}\times DD_{(t+1)}$, set ${\bf n}=(n_1,\ldots, n_t)$ its image under $\varphi$, and $i \in \{1,\ldots, t\}$. We denote by  ${\bf n'}=(n_0',n_1',\ldots, n_t')$ (respectively ${\bf n''}=(n_0'',n_1'',\ldots, n_t'')$)  the image of $\lambda$ (respectively $\mu$) under $\phi$, and  by $h_{\lambda, \ell\ell}$ (respectively $h_{\mu, \ell \ell}$) the hook length of the box ($\ell, \ell$) of $\lambda$ (respectively $\mu$).
The proof of \eqref{liennideltai} comes from a precise examination of the action of the bijections $\phi$, $\phi_1$ and $\phi_2$ on $\lambda$ and $\mu$. Twelve cases occur, according to the following distinctions:
\begin{itemize}
\item $(t+1)$ is even or odd,
\item $i$ is even or odd,
\item $n_i$ is positive, negative, or equal to $0$.
\end{itemize}

{\bf Case $1$:} $t+1$ is even, $i=2j+1$ is odd, $n_{2j+1} >0$.
\\ In this case, the definition of $\varphi$  guarantees that $n_{2j+1}=n_{\frac{t+1}{2}+j+1}'$. By definition of $\phi$ (see Section~\ref{bijphi}), the positivity of $n_{\frac{t+1}{2}+j+1}'$ implies that there is in the extended $(t+1)$-residue diagram of $\lambda$ a row with minimal index $\ell$ (with $\ell \leq D(\lambda)$), such that $\frac{t+1}{2}+j$ is exposed at the end of this row, in the region $n_{\frac{t+1}{2}+j+1}'$.  As $\lambda$ is self-conjugate, we can deduce that:
\begin{align*}
h_{\lambda,\ell\ell}=& 2\left[(t+1)(n_{\frac{t+1}{2}+j+1}' -1)+\frac{t+1}{2}+j+1 \right]-1
\\=&(2t+2)n_{2j+1}-t-1+(2j+1).
\end{align*}
In particular, we have $h_{\lambda,\ell\ell} \equiv 2j+1 +t+1\mbox{~mod~}2t+2$, and by minimality of $\ell$, this principal hook length is maximal in its congruency class. So $h_{\lambda,\ell\ell}=\Delta_{2j+1}$ and therefore
\begin{equation*}\Delta_{2j+1}=\left[(2t+2)n_{2j+1}+(2j+1)\right]-t-1.\end{equation*}

{\bf Case $2$:} $t+1$ is even, $i=2j+1$ is odd, $n_{2j+1} <0$.
\\For $n_{2j+1} <0$, we have this time $n_{2i+1}=n_{\frac{t+1}{2}+j+1}'= -n_{\frac{t+1}{2}-j}'$, with $n_{\frac{t+1}{2}-j}' >0$. This positivity implies that $\frac{t+1}{2}-j-1$ is exposed at the end of a row with minimal index $\ell$, in the region $n_{\frac{t+1}{2}-j}'$.  As $\lambda$ is self-conjugate, we have:
\begin{align*}
h_{\lambda,\ell\ell}=& 2\left[(t+1)(-n_{\frac{t+1}{2}+j+1}' -1)+\frac{t+1}{2}-j \right]-1
\\=&-(2t+2)n_{2j+1}-t-1-(2j+1).
\end{align*}
As in case $1$, we derive:
\begin{equation*}\Delta_{2j+1}=-\left[(2t+2)n_{2j+1}+(2j+1)\right]-t-1. \end{equation*}

{\bf Case $3$:} $t+1$ is even, $i=2j+1$ is odd, $n_{2j+1} =0$.
\\ Here, there is no principal hook length equal to $\pm(2j+1) +t+1$ mod $2t+2$, so $\Delta_{2j+1} = (2j+1)-t-1$, and \eqref{liennideltai} is satisfied.
\smallskip

{\bf Case $4$:} $t+1$ is even, $i=2j$ is even, $n_{2j} >0$.
\\This time, by definition of $\varphi$, the involved partition is $\mu$ and $n_{2j}=n_{\frac{t+1}{2}+j+1}''$. By definition of $\phi$, the positivity of $n_{\frac{t+1}{2}+j+1}''$ implies that there is in the extended $(t+1)$-residue diagram of $\mu$ a row with minimal index $\ell$ (with $\ell \leq D(\lambda)$), such that $\frac{t+1}{2}+j$ is exposed at the end of this row, in the region $n_{\frac{t+1}{2}+j+1}''$. As $\mu$ is a doubled distinct partition, we can deduce that:

\begin{align*}
h_{\mu,\ell\ell}=& 2\left[(t+1)(n_{\frac{t+1}{2}+j+1}'' -1)+\frac{t+1}{2}+j+1 \right]-2
\\=&(2t+2)n_{2j+1}-t-1+(2j).
\end{align*}
So we have $h_{\mu,\ell\ell} \equiv 2j +t+1\mbox{~mod~}2t+2$, and this principal hook length is maximal in its congruency class.
Then, 
\begin{equation*}h_{\mu,\ell\ell}=\Delta_{2j}=\left[(2t+2)n_{2j}+(2j)\right]-t-1.\end{equation*}

{\bf Other eight cases:} they can be proved in a similar way as one of the four previous ones, by noting that when $t+1$ is odd, the roles of $\lambda$ and $\mu$ are inverted by definition of $\varphi$.
\end{proof}

\medskip
The proof of the previous theorem allows us to derive easily the following  recursive description of the inverse of $\varphi$, which is simpler than the description of $\varphi$. Fix a vector ${\bf n}=(n_1,\ldots, n_t)$ in $\mathbb{Z}^t$, then $(\lambda,\mu)=\varphi^{-1}({\bf n})$ satisfies:
 \begin{itemize}
\item if all the $n_i$'s are equal to zero, then $\lambda$ and $\mu$ are empty,
\item if a $n_i$ is equal to $1$, then the corresponding partition ($\lambda$ or $\mu$, depending on the parity of $t+1+i$) contains a principal hook of length $t+1+i$,
\item if a $n_i$ is equal to $-1$, then the corresponding partition contains a principal hook of length $t+1-i$,
\item the preimage of $(n_1,\ldots,n_i +1, \ldots, n_t)$ if $n_i>0$ (respectively $(n_1,\ldots,n_i -1, \ldots, n_t)$ if $n_i <0$) is the preimage of $(n_1,\ldots,n_i, \ldots, n_t)$ in which we add in the corresponding partition a principal hook of length $(t+1)(2n_i-1)+i$ (respectively $(t+1)(-2n_i-1)-i$) (note that it can be done in a unique way).
\end{itemize}
\begin{Remark}\label{remarquedelta} There are three immediate consequences of the previous recursive description of $\varphi^{-1}$.
\\(i) There can not be in  $\Delta$ both an integer equal to  $i+t+1$ mod $2t+2$ and an integer equal to $-i+t+1$ mod $2t+2$.
\\(ii) If $h>2t+2$ belongs to  $\Delta$, then $h-2t-2$ also belongs to $\Delta$.
\\(iii) If a finite subset of $\mathbb{N}$ satisfies the two former properties (i) and (ii) and does not contain any element equal to zero modulo $2t+2$, then it is the set $\Delta$ of a pair of $(t+1)$-cores $(\lambda,\mu)\in SC_{(t+1)} \times DD_{(t+1)}$.
\end{Remark}
By using our bijection $\varphi$, and by setting $v_i = (2t+2)n_i +i$ for $1 \leq i \leq t$, we can replace the sum in the Macdonald formula~\eqref{equaC} by a sum over pairs $(\lambda,\mu) \in SC_{(t+1)} \times DD_{(t+1)}$ (and not over vectors of integers). Therefore \eqref{equaC} takes the form (recall that $\sigma_i$ is equal to $1 $ (respectively $-1$) if $n_i \geq 0$ (respectively $n_i<0$)):
\begin{multline}\label{eqcoeff2} \displaystyle\prod\limits_{k \geq 1}(1-x^k)^{2t^2+t}  
\\= \displaystyle c_1 \sum\limits_{\lambda, \mu} x^{|\lambda| +|\mu|}\prod\limits_i ((2t+2)n_i +i)\prod\limits_{i<j}\left(((2t+2)n_i +i)^ 2-((2t+2)n_j +j)^2\right) 
\\ =\displaystyle c_1 \sum\limits_{\lambda, \mu} x^{|\lambda| +|\mu|}\prod_i \sigma_i(t+1+\Delta_i)\prod_{i<j}((t+1+\Delta_i)^2-(t+1+\Delta_j)^2).\end{multline}

\subsection{Simplification of coefficients}\label{section3.2}
The next step towards proving Theorem~\ref{theoremeintro} is a simplification of both products on the right-hand side of \eqref{eqcoeff2}, in such a way that they do not depend on the numbers $\Delta_i$ (and more generally, that they do not depend on congruency classes modulo $2t+2$). To do that, we need the following notion defined in \cite{HAN}, but only for odd integers.
 \begin{definition}A finite set of integers $A$ is a $2t+2$-\emph{compact set} if and only if the following conditions hold:
\begin{itemize}
\item[(i)] $-1,-2, \ldots, -2t-1$ belong to $A$;
\item[(ii)] for all $a \in A$ such that $a \neq -1,-2, \ldots, -2t-1$, we have $a \geq 1$ and $a \not\equiv 0~\mbox{mod~} 2t+2$;
\item[(iii)] let $b>a \geq 1$ be two integers such that $a \equiv b$ mod $2t+2$. If $b \in A$, then $a \in A$.
\end{itemize}
\end{definition}
Let A be a $2t+2$-compact set. An element $a\in A$ is $2t+2$\emph{-maximal} if for any integer $b>a$ such that $a\equiv b $ mod $2t+2$, $b \notin A$ ($i.e.$ $a$ is maximal in its congruency class modulo $2t+2$). The set of $2t+2$-maximal elements is denoted by $max_{2t+2}(A)$. It is clear by definition of compact sets that $A$ is uniquely determined by $max_{2t+2}(A)$. We can show the following lemma, whose proof is analogous to the one of \cite{HAN}, but in the even case.
\begin{lemma}\label{lemme han} For any $2t+2$-compact set A, we have:
\begin{equation}\label{eq17}
 \prod_{a \in max_{2t+2}(A)} \frac{a+2t+2}{a}=-\prod_{a\in A, a>0}\left(1-\left(\frac{2t+2}{a} \right)^2\right) .
\end{equation}
\end{lemma}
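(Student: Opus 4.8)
The plan is to set $m := 2t+2$ and to organize the whole computation according to the residue classes modulo $m$. The key observation is that both sides of \eqref{eq17} factor as products indexed by the nonzero residues mod $m$, so it suffices to match the two sides class by class and then collect the leftover constant. First I would pin down the shape of $A$ inside a fixed class. The negative elements $-1,\ldots,-(2t+1)$ occupy each nonzero residue exactly once, the residue $r\in\{1,\ldots,m-1\}$ being represented by $-(m-r)$. By the downward-closure condition (iii), the positive part of $A$ in class $r$ is either empty or a full chain $r,r+m,\ldots,r+k_r m$ for some $k_r\geq 0$. Hence the unique $2t+2$-maximal element of class $r$ is the top positive element $r+k_r m$ when positive elements are present, and the negative element $-(m-r)$ otherwise.

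The main computation is then a telescoping. Writing each right-hand factor as $1-(m/a)^2=(a-m)(a+m)/a^2$ and setting $a_\ell:=r+\ell m$, I would use $a_\ell-m=a_{\ell-1}$ and $a_\ell+m=a_{\ell+1}$ to collapse the product over a chain:
\[
\prod_{\ell=0}^{k_r}\frac{(a_\ell-m)(a_\ell+m)}{a_\ell^2}=\frac{a_{-1}\,a_{k_r+1}}{a_0\,a_{k_r}}=\frac{(r-m)\bigl(r+(k_r+1)m\bigr)}{r\,(r+k_r m)}.
\]
Comparing this with the corresponding left-hand factor $\dfrac{(r+k_r m)+m}{r+k_r m}$ for that class, the quotient (right over left) is exactly $\dfrac{r-m}{r}$. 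For a class carrying no positive element, the right-hand contribution is the empty product $1$ while the left-hand factor is $\dfrac{-(m-r)+m}{-(m-r)}=\dfrac{r}{r-m}$, so the \emph{same} quotient $\dfrac{r-m}{r}$ reappears. The pleasant point is that this ratio is uniform over all classes.

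Finally I would multiply over all nonzero residues. Since
\[
\prod_{r=1}^{m-1}\frac{r-m}{r}=\frac{(-1)^{m-1}(m-1)!}{(m-1)!}=(-1)^{m-1}=-1
\]
because $m=2t+2$ is even, the product side of the right-hand member of \eqref{eq17} equals $-1$ times the left-hand member; absorbing the global minus sign in \eqref{eq17} then yields the stated equality.

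The only genuinely delicate point is the bookkeeping of the maximal elements: one must treat on an equal footing the classes that do and do not contain a positive element, and verify that the boundary term $a_{-1}=r-m$ produced by the telescoping matches precisely the negative maximal element $-(m-r)$ of the empty-positive classes. Once this alignment is set up, the parity of $m$ is exactly what forces $(-1)^{m-1}=-1$, which is the source of the minus sign on the right-hand side and the one place where the argument departs from the odd-$m$ version in \cite{HAN}.
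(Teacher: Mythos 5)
Your proof is correct and follows essentially the same route as the paper: both write $1-\left(\tfrac{2t+2}{a}\right)^2=\tfrac{(a-2t-2)(a+2t+2)}{a^2}$, telescope along each congruence class using property (iii), and extract the sign from the product over the $2t+1$ negative elements, which equals $-1$ precisely because $2t+2$ is even. Your version merely makes explicit the residue-class bookkeeping that the paper leaves to the reader.
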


\begin{proof}
As $2t+2$ is an even integer, we have:
\begin{equation*}
\frac{1\cdot2\cdots(2t+1)}{(-2t-1)\cdots(-2)\cdot(-1)}=-1.
\end{equation*}

Then we write 
\begin{multline*}
-\prod_{a\in A, a>0}\left(1-\left(\frac{2t+2}{a} \right)^2\right) =\\ \frac{1}{(-2t-1)}\times\frac{2}{(-2t)}\times\cdots\times\frac{(2t+1)}{-1}\prod_{a\in A, a>0} \frac{(a-2t-2)}{a}\times\frac{(a+2t+2)}{a},
\end{multline*}
and the result follows by telescoping terms in the product thanks to the property (iii) of compact sets.
\end{proof}

Now the strategy is to do an induction on the number of principal hooks of the pairs $(\lambda, \mu$) appearing in \eqref{eqcoeff2}. The two following lemmas are the first step.

Let $(\lambda, \mu)$ be in $SC_{(t+1)} \times DD_{(t+1)}$ with $\lambda$ or $\mu$ non empty,  and let $\Delta$ be the set of principal hook lengths of $\lambda$ and $\mu$, from which we can define the numbers $\Delta_i$ as in Definition~\ref{deltai}. We denote by $h_{11}$ the maximal element of $\Delta$. We denote by $(\lambda', \mu') \in SC_{(t+1)} \times DD_{(t+1)}$ the pair  obtained by deleting the principal hook of length $h_{11}$. We denote by $\Delta'$ the set of principal hook lengths of $\lambda'$ and $\mu'$, and consider its associated numbers $\Delta_i'$. 

\begin{figure}[h!]
\includegraphics[scale=1.2]{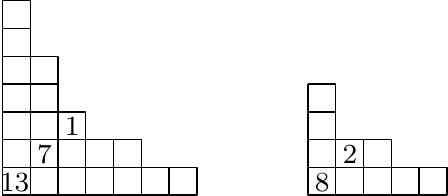}\hspace*{25pt}$\rightarrow$\hspace*{25pt}\includegraphics[scale=1.2]{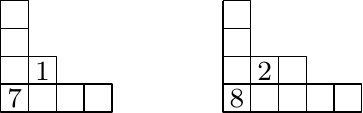}
\caption{\label{fig6}On the left, an element $(\lambda, \mu)  \in SC_{(3)} \times DD_{(3)}$. On the right, the associated $(\lambda', \mu')$.}
\end{figure}
In the example of Figure~\ref{fig6}, we have $t+1=3$, $\Delta=\{13,8,7,2,1\}$, $\Delta_1= 8$ and $\Delta_2=13$. Therefore, $\Delta'=\{8,7,2,1\}$, $\Delta_1'=8 $ and $\Delta_2'=7$.

\begin{lemma}\label{lemme 1}If $i_0$ is the (unique) integer such that $\Delta_{i_0} = h_{11}$, then we have:
\begin{multline}\label{eqlemme 1}
\prod_i \frac{\sigma_i(t+1+\Delta_i)}{\sigma_i'(t+1+\Delta_i')}\prod_{i<j}\frac{(t+1+\Delta_i)^ 2-(t+1+\Delta_j)^2}{(t+1+\Delta_i')^ 2-(t+1+\Delta_j')^2}\\= \left(1-\frac{2t+2}{h_{11}}\right)\left(1-\frac{t+1}{h_{11}}\right) \left(\frac{h_{11}}{h_{11}-2t-2}\frac{2h_{11}}{2h_{11}-2t-2}\right) \left(\frac{h_{11}+t+1}{h_{11}-t-1}\right)\\\times\prod_{j \neq i_0} \frac{(h_{11}+ \Delta_j +2t+2)(h_{11}-\Delta_j)}{(h_{11}+ \Delta_j )(h_{11}-\Delta_j -2t-2)}.
\end{multline}
\end{lemma}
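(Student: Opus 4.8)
The plan is to track exactly how the triple of data $(\Delta,\Delta_i,\sigma_i)$ attached to $(\lambda,\mu)$ is modified when the maximal principal hook, of length $h_{11}$, is deleted; everything then reduces to two product computations and a trivial rearrangement. First I would record the three structural facts that drive the argument (they hold uniformly whether $h_{11}$ is a principal hook of the self-conjugate partition $\lambda$ or of the doubled distinct partition $\mu$, since \eqref{liennideltai} treats both via the parity of the index). Since deleting the outermost principal hook of a self-conjugate or of a doubled distinct partition leaves all the smaller principal hook lengths unchanged, one has $\Delta'=\Delta\setminus\{h_{11}\}$. Because $h_{11}>0$ lies in a single class modulo $2t+2$, the index $i_0$ with $\Delta_{i_0}=h_{11}$ is unique, and for $i\neq i_0$ the two classes $\pm i-t-1$ are disjoint from that of $h_{11}$ (here $1\le i,i_0\le t$ forbids $i\equiv\pm i_0$, together with Remark~\ref{remarquedelta}(i)); hence $\Delta_i'=\Delta_i$ and, by \eqref{liennideltai}, $\sigma_i'=\sigma_i$ for every $i\neq i_0$. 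Finally, for $i_0$ I would use Remark~\ref{remarquedelta}(ii): when $h_{11}>2t+2$ the element $h_{11}-2t-2$ still lies in $\Delta'$ and is maximal in its class, so $\Delta_{i_0}'=h_{11}-2t-2$, whereas when $h_{11}<2t+2$ (equivalently $n_{i_0}=\pm1$, read off from the recursive description of $\varphi^{-1}$) the class empties and $\Delta_{i_0}'=i_0-t-1$.

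With these facts the single-factor product $\prod_i \sigma_i(t+1+\Delta_i)/\big(\sigma_i'(t+1+\Delta_i')\big)$ collapses to its $i_0$-term, since all other factors are $1$. I would then verify by a four-way check on $n_{i_0}$ that this term always equals $\dfrac{h_{11}+t+1}{h_{11}-t-1}$: in the generic cases $n_{i_0}\ge2$ and $n_{i_0}\le-2$ one has $\sigma_{i_0}'=\sigma_{i_0}$ and $\Delta_{i_0}'=h_{11}-2t-2$, giving the value directly; in the boundary cases $n_{i_0}=\pm1$ the quantity $\sigma_{i_0}'(t+1+\Delta_{i_0}')$ equals $i_0=\pm(h_{11}-t-1)$, and the residual sign is absorbed by $\sigma_{i_0}$, so the same value persists.

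For the cross-term product only the pairs containing $i_0$ survive, because $\Delta_j'=\Delta_j$ for $j\neq i_0$ makes every other ratio equal to $1$. Writing $H=t+1+h_{11}$ and $D_j=t+1+\Delta_j$, each surviving pair contributes $\dfrac{H^2-D_j^2}{(t+1+\Delta_{i_0}')^2-D_j^2}$, the sign from the ordering $i<j$ being identical in numerator and denominator. The key observation is that in all cases $(t+1+\Delta_{i_0}')^2=(h_{11}-t-1)^2$, so the denominator is $(H-2t-2)^2-D_j^2$; factoring both as differences of squares and substituting $H-D_j=h_{11}-\Delta_j$, $H+D_j=h_{11}+\Delta_j+2t+2$, $H-2t-2-D_j=h_{11}-\Delta_j-2t-2$, $H-2t-2+D_j=h_{11}+\Delta_j$ turns the cross-term into exactly the product $\prod_{j\neq i_0}$ on the right-hand side of \eqref{eqlemme 1}.

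Multiplying the two outputs gives $\dfrac{h_{11}+t+1}{h_{11}-t-1}\prod_{j\neq i_0}(\cdots)$, and to reach the stated form it remains only to note that the four prefactors split into two reciprocal pairs each equal to $1$, namely $\big(1-\tfrac{2t+2}{h_{11}}\big)\tfrac{h_{11}}{h_{11}-2t-2}=1$ and $\big(1-\tfrac{t+1}{h_{11}}\big)\tfrac{2h_{11}}{2h_{11}-2t-2}=1$; they are displayed separately only because each will later be interpreted as the Nekrasov--Okounkov contribution of a specific hook of the associated doubled distinct partition. The main obstacle is precisely the bookkeeping at $i_0$: one must confirm that the boundary cases $n_{i_0}=\pm1$, where both the sign $\sigma_{i_0}$ and the floor term in $\Delta_{i_0}$ intervene, still produce the uniform values $\dfrac{h_{11}+t+1}{h_{11}-t-1}$ and $(h_{11}-t-1)^2$. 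Once this case-uniformity is secured, the remainder is a routine telescoping.
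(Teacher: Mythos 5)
Your proposal is correct and follows essentially the same route as the paper: collapse both products to the $i_0$-factors since $\Delta_i'=\Delta_i$ and $\sigma_i'=\sigma_i$ for $i\neq i_0$, then check by cases on the size of $h_{11}$ (equivalently on $n_{i_0}$) that $\sigma_{i_0}(t+1+\Delta_{i_0})/\bigl(\sigma_{i_0}'(t+1+\Delta_{i_0}')\bigr)=\tfrac{h_{11}+t+1}{h_{11}-t-1}$ and $(t+1+\Delta_{i_0}')^2=(h_{11}-t-1)^2$, and finally observe that the extra prefactors cancel in reciprocal pairs. The only cosmetic difference is that you split the generic case into $n_{i_0}\ge 2$ and $n_{i_0}\le -2$ where the paper treats $h_{11}>2t+2$ as a single case.
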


\begin{proof}
First,  note that \begin{equation*}\left(1-\frac{2t+2}{h_{11}}\right)\left(1-\frac{t+1}{h_{11}}\right)=\left(\frac{h_{11}}{h_{11}-2t-2}\frac{2h_{11}}{2h_{11}-2t-2}\right)^{-1}.\end{equation*}
However, these terms are kept in this lemma in order to clarify the statement of the next one.
To prove \eqref{eqlemme 1}, we examine the consequence on the numbers $\Delta_i$ of the removal of the largest principal hook $h_{11}$. We have for all $i \neq i_0,~ \Delta_i=\Delta_i'$ and so $\sigma_i=\sigma_i'$. Indeed, the only maximum of the principal hook length congruency classes modulo $2t+2$ which can be changed by this deletion is $\Delta_{i_0}$. We can deduce that:
\begin{multline*}\prod_i \frac{\sigma_i(t+1+\Delta_i)}{\sigma_i'(t+1+\Delta_i')}\prod_{i<j}\frac{(t+1+\Delta_i)^2-(t+1+\Delta_j)^2}{(t+1+\Delta_i')^2-(t+1+\Delta_j')^2}\\= \frac{\sigma_{i_0}(t+1+\Delta_{i_0})}{\sigma_{i_0}'(t+1+\Delta_{i_0}')}\prod_{j \neq i_0}\frac{(t+1+\Delta_{i_0})^2-(t+1+\Delta_j)^2}{(t+1+\Delta_{i_0}')^2-(t+1+\Delta_j)^2}.
\end{multline*}

We want to rewrite the right-hand side of the previous product. We consider three cases depending on the value of $h_{11}=\Delta_{i_0}$.

\begin{itemize}
\item If $h_{11}>2t+2$, by Remark~\ref{remarquedelta} (i) and (ii), we know that $h_{11}-2t-2$ belongs to $\Delta$, and also to $\Delta'$. So $\Delta_{i_0}'=\Delta_{i_0}-2t-2$. In this case, we have $\sigma_{i_0}=\sigma_{i_0}'$ and \begin{equation*}\frac{\sigma_{i_0}(t+1+\Delta_{i_0})}{\sigma_{i_0}'(t+1+\Delta_{i_0}')}=\frac{h_{11}+t+1}{h_{11}-t-1}.\end{equation*} 
Finally, we can immediately rewrite the product over $j \neq i_0$   to obtain \eqref{eqlemme 1}.
\item If $t+1<h_{11}<2t+2$, by the definition of $\Delta_{i_0}$ and Remark~\ref{remarquedelta} (i), we know that $h_{11}=t+1+i_0$ and $\sigma_{i_0}=1$. In this case, as we delete the only hook equal to $\pm i +t+1$ mod $2t+2$, we have $\Delta_{i_0}'= i_0 -t-1= h_{11} -2t-2$, and $\sigma_{i_0}'=1$. Therefore \begin{equation*}\frac{\sigma_{i_0}(t+1+\Delta_{i_0})}{\sigma_{i_0}'(t+1+\Delta_{i_0}')}=\frac{h_{11}+t+1}{h_{11}-t-1}, \end{equation*}
and moreover the product can be directly rewritten as in \eqref{eqlemme 1}.
\item If $0<h_{11}<t+1$, by the definition of $\Delta_{i_0}$ and Remark~\ref{remarquedelta} (i), we know that $h_{11}=t+1-i_0$ and $\sigma_{i_0}=-1$. In this case, we have $\Delta_{i_0}'= i_0 -t-1= -h_{11}$ and $\sigma_{i_0}'=1$. We have:
\begin{equation*}
\frac{\sigma_{i_0}(t+1+\Delta_{i_0})}{\sigma_{i_0}'(t+1+\Delta_{i_0}')}= - \frac{t+1+h_{11}}{t+1-h_{11}}=\frac{h_{11}+t+1}{h_{11}-t-1}.
\end{equation*}
The form \eqref{eqlemme 1} for the product over $j \neq i_0$ again comes straightforwardly after noticing that 

\begin{equation*}(t+1+ \Delta_{i_0}')^2 =(t+1 -h_{11})^2= (h_{11}-t-1)^2. \end{equation*}
\end{itemize}
\end{proof}

\begin{lemma}\label{lemme 2}
With the same notations as above, we define the set $E$ as:\begin{equation}\label{defE}\left(\bigcup \limits _{j \neq i_0} \{h_{11}+ \Delta_j, h_{11}-\Delta_j -2t-2\}\right) \cup\{h_{11}-t-1, h_{11}-2t-2, 2h_{11}-2t-2\}. \end{equation} Then there exists a unique $2t+2$-compact set $H$, such that $E= max_{2t+2}(H)$. Moreover, its subset $H_{>0}$ of positive elements is independent of $t$ and made of elements of the form  $h_{11} + \tau_m m$, where $1\leq m\leq h_{11}-1$, and $\tau_m$ is equal to $1$ if $m$ is a principal hook length ($i.e.~m \in \Delta$) and to $-1$ otherwise. 
\end{lemma}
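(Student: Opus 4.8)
The plan is to argue constructively, using the second assertion as the blueprint. I set
\[
H_{>0}:=\{\,h_{11}+\tau_m m\ :\ 1\le m\le h_{11}-1\,\},\qquad
H:=H_{>0}\cup\{-1,-2,\dots,-2t-1\},
\]
where $\tau_m=1$ if $m\in\Delta$ and $\tau_m=-1$ otherwise, and I prove that $H$ is a $2t+2$-compact set with $\max_{2t+2}(H)=E$. Because a compact set is determined by its maximal elements, this single statement yields existence of $H$, uniqueness (any compact set with maximal set $E$ must equal this one), and the description of $H_{>0}$; moreover the displayed formula for $H_{>0}$ involves only $h_{11}$ and $\Delta$ and no explicit $t$, which is the claimed independence of $t$. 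Thus everything reduces to the two verifications \textbf{(A)} $H$ is $2t+2$-compact and \textbf{(B)} $\max_{2t+2}(H)=E$.

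For \textbf{(A)} I first split $H_{>0}$ into an \emph{upper} family $\{h_{11}+m:m\in\Delta,\ m<h_{11}\}\subset(h_{11},2h_{11})$ and a \emph{lower} family $\{h_{11}-m:1\le m\le h_{11}-1,\ m\notin\Delta\}\subset(0,h_{11})$, which are disjoint, so the $h_{11}-1$ values are distinct. Condition~(i) is built in. For condition~(ii) an upper element $\equiv 0$ would force $m\equiv-h_{11}$, impossible since by Remark~\ref{remarquedelta}(i) the set $\Delta$ meets the class of $h_{11}$ and hence not the conjugate class of $-h_{11}$; a lower element $\equiv0$ would force $m\equiv h_{11}$ with $1\le m<h_{11}$, so $m\in\{h_{11}-(2t+2),h_{11}-2(2t+2),\dots\}\subset\Delta$ by iterating Remark~\ref{remarquedelta}(ii), contradicting $m\notin\Delta$. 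For condition~(iii) I subtract $2t+2$ and track the image: Remark~\ref{remarquedelta}(ii) keeps an upper element upper while it stays above $h_{11}$, and once it drops below $h_{11}$ the conjugate-pair property Remark~\ref{remarquedelta}(i) shows the new index is not a principal hook so the element is a legitimate lower one, while a lower element stays lower by the contrapositive of Remark~\ref{remarquedelta}(ii); the range condition $m'\le h_{11}-1$ is automatic since the image is assumed positive.

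The substance, and the step I expect to be the main obstacle, is \textbf{(B)}. I would group the $2t+1$ nonzero residue classes modulo $2t+2$ into the shifted pairs $\{\,h_{11}+t+1+j,\ h_{11}+t+1-j\,\}$ for $j=1,\dots,t$, plus the two leftover classes $\equiv h_{11}+t+1$ and $\equiv h_{11}$, and compute $\max H$ in each, using that an upper element always dominates a lower one, which dominates the forced negative representative. By \eqref{liennideltai} the residue of $\Delta_j$ is $t+1+\sigma_j j$ (with $\sigma_j$ the sign of $n_j$), and the key point is that the maximum in the class $\equiv h_{11}+t+1+\sigma_j j$ equals $h_{11}+\Delta_j$ uniformly — an upper element when $\Delta_j>0$, and a lower element or a forced negative when $\Delta_j<0$ (i.e.\ $\Delta_j=j-t-1$) — while the conjugate class carries no upper element and is maximized by $h_{11}-(\Delta_j+2t+2)=h_{11}-\Delta_j-2t-2$. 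For the distinguished index $i_0$ one of its two classes is $\equiv0$ (empty) and the other is $\equiv 2h_{11}$, maximized by the upper element $2h_{11}-2t-2$ with index $h_{11}-2t-2\in\Delta$; the class $\equiv h_{11}+t+1$ gives the lower element $h_{11}-t-1$ (since $t+1\notin\Delta$) and the class $\equiv h_{11}$ gives $h_{11}-2t-2$ (since the smallest index $m\equiv0$ is $m=2t+2\notin\Delta$). These are exactly the three special elements of \eqref{defE}.

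The remaining difficulty is purely the sign-and-range bookkeeping: one must decide, in each class, whether the extremal element is genuinely a positive element of $H$ or collapses onto a forced negative $-i$, and then check that the $2t+1$ maxima produced are distinct and exhaust the list \eqref{defE}. I expect this to be the most delicate part, but the conjugate-pair and downward-closure properties Remark~\ref{remarquedelta}(i)--(ii), together with the congruence \eqref{liennideltai} that pins down the residue of each $\Delta_j$, make every case fall out mechanically; this completes \textbf{(A)} and \textbf{(B)}, and hence the lemma.
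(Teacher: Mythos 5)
Your proof is correct in strategy but runs in the opposite direction from the paper's. The paper starts from $E$: it first checks that $E$ has $2t+1$ elements, pairwise distinct and nonzero modulo $2t+2$ (so that $E=\max_{2t+2}(H)$ for a unique compact $H$), and then establishes $H_{>0}=H'$ by double inclusion, descending from each element of $E$ through its congruency class and identifying every intermediate value as some $h_{11}+\tau_m m$. You instead take the description $H':=\{h_{11}+\tau_m m\}$ as the definition of $H_{>0}$, verify the three axioms of compactness directly (your split into an upper family indexed by $m\in\Delta$ and a lower family indexed by $m\notin\Delta$, with Remark~\ref{remarquedelta}(i) ruling out the residue $0$ for upper elements, iterated Remark~\ref{remarquedelta}(ii) ruling it out for lower ones, and the same two facts giving downward closure), and only then compute the class-by-class maxima to recover $E$. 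Both arguments consume exactly the same inputs --- Remark~\ref{remarquedelta}(i)--(ii) and the congruence \eqref{liennideltai} --- so neither is more general, but your organization buys a cleaner uniqueness argument (uniqueness is immediate once a single compact set with the right maxima is exhibited) and makes the independence of $H_{>0}$ from $t$ a definition rather than a conclusion; the price is that the entire burden shifts onto step (B), which is precisely the part you leave as ``sign-and-range bookkeeping.''

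That deferred bookkeeping does go through with the tools you have assembled (maximality of $\Delta_j$ in its class shows no upper element exceeds $h_{11}+\Delta_j$; Remark~\ref{remarquedelta}(i) shows the conjugate class contains no upper element, and downward closure of $\Delta\cap(\Delta_j+ (2t+2)\mathbb{Z})$ identifies $\Delta_j+2t+2$ as the least admissible lower index), so I do not regard it as a gap, but two statements need the case split you only gesture at: $2h_{11}-2t-2$ is an \emph{upper} element only when $h_{11}>2t+2$ (otherwise it is the lower element of index $2t+2-h_{11}\notin\Delta$, or a forced negative), and each of $h_{11}-t-1$, $h_{11}-2t-2$ and $h_{11}-\Delta_j-2t-2$ collapses onto the forced negative representative of its class exactly when it is nonpositive --- one should check, as you can from $\Delta_j\le h_{11}-1$ for $j\ne i_0$, that these quantities then land in $\{-1,\dots,-2t-1\}$ and never on a multiple of $2t+2$.
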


\begin{proof}
To show that the set $E$ is the $max_{2t+2}(H)$ of a set $H$, it is sufficient by definition to verify that $E$ contains exactly $2t+1$ elements, that all of them are distinct modulo $2t+2$, and that none of them is equal to $0$ modulo $2t+2$. All of these properties result from the definition of the numbers $\Delta_i$ (see Definition~\ref{deltai}). Moreover, unicity is immediate by definition of $2t+2$-compact sets.

It remains to show that the set $H_{>0}$ and the set $H'$ made of elements of the form  $h_{11} +\tau_m m$, where $1\leq m\leq h_{11}-1$, are the same. We start by proving the inclusion $H_{>0} \subset H'$. To this aim, we notice first that all the positive elements of $E$ are also elements of $H_{>0}$. Moreover, we can describe all the elements of $H_{>0}$ from the elements of $E$; more precisely, from each element $x$ of $E$, we will describe all the elements of $H_{>0}$ which are in the same congruency class modulo $2t+2$ as $x$. It will remain to show that these elements are necessarily also in $H'$.

Let $i$ be the rest in the euclidean division of $h_{11}$ by $2t+2$, then $h_{11}=2k(t+1)+i$ and two cases can occur: either $i \in \{1,\ldots, t\}$ or $i \in \{t+2,\ldots, 2t+1\}$. We only treat here the first case, the second can be done in the same way.

The element $x:=h_{11}-(t+1)$ in $E$  is strictly positive if and only if  $k>0$, which implies that the integers belonging to $H_{>0}$ which are in the same congruency class modulo $2t+2$ as  $x$  are exactly the integers $h_{11}-(t+1),  h_{11}-3(t+1), \ldots, h_{11}-(2k-1)(t+1)$. Each integer $m \in \{(t+1), 3(t+1),\ldots,  (2k-1)(t+1)\}$  is not a principal hook length of $\lambda$ or $\mu$ and is smaller than $h_{11}$,  and so we have $\tau_m=-1$. Therefore $h_{11}-m$ belongs to $H'$.

The element $x:=h_{11}-2(t+1)$ in $E$  is strictly positive if and only if  $k>0$, which implies that the integers belonging to $H_{>0}$ which are in the same congruency class modulo $2t+2$ as  $x$  are exactly the integers $h_{11}-2(t+1),  h_{11}-4(t+1), \ldots, h_{11}-2k(t+1)$. Each integer $m \in  \{2(t+1), 4(t+1),\ldots,  2k(t+1)\}$  is not a principal hook length of $\lambda$ or $\mu$ and is smaller than $h_{11}$, so we have $\tau_m=-1$. Therefore $h_{11}- m$ belongs to $H'$.

The element $x:=2h_{11}-2t-2$ in $E$ is strictly positive if and only if  $k>0$, which implies that the  integers belonging to $H_{>0}$ which are in the same congruency class modulo $2t+2$ as  $x$  are exactly the integers $2h_{11}-2(t+1), 2h_{11}-4(t+1), \ldots, 2h_{11}-4k(t+1)$. These elements all belong to $H'$ for one of the two following reasons.  First, each integer $m \in \{(2k-2)(t+1)+i, (2k-4)(t+1)+i, \ldots, i\}$  is a principal hook length smaller than $h_{11}$ according to Remark~\ref{remarquedelta}, so  we have $\tau_m=1$ and therefore $2h_{11}-2(t+1), \ldots,2h_{11}-2k(t+1)$ all belong to $H'$. Second, each integer $m \in \{2k(t+1)-i, (2k-2)(t+1)-i, \ldots, 2(t+1)-i\}$ is not a principal hook length according to Remark~\ref{remarquedelta} and is smaller than $h_{11}$, so we have $\tau_m=-1$ and therefore $2h_{11}-4k(t+1), \ldots,2h_{11}-(2k+2)(t+1)$ all belong to $H'$.

For the other elements $x$ of $E$, we must again consider two cases. If we make the euclidean division of $\Delta_{t+1-j}$ by $2(t+1)$, we have either $\Delta_{t+1-j}=2\ell(t+1)+j$ or $\Delta_{t+1-j}=2\ell(t+1)-j$ (by definition of $\Delta_{t+1-j}$). Again, we only treat here the first case, as the second can be done in the same way.

The element $x:=h_{11}+ \Delta_{t+1-j}= 2(k+l)(t+1)+i+j$ in $E$ is strictly positive which implies that the integers belonging to $H_{>0}$ which are in the same congruency class modulo $2t+2$ as  $x$  are exactly the integers $2(k+\ell)(t+1)+i+j, 2(k+\ell-1)(t+1)+i+j, \ldots, i+j$. The element $y:=h_{11} - \Delta_{t+1-j}-2(t+1)= 2(k-\ell-1)(t+1)+i+j$  in $E$ is strictly positive if and only if $k>\ell$ which implies that the integers belonging to $H_{>0}$ which are in the same congruency class modulo $2t+2$ as  $y$  are exactly the integers $2(k-\ell-1)(t+1)+i-j, \ldots, 2(t+1)+i-j, i-j$ (the integer $i-j$ must be considered only if it is positive). All these elements are also in $H'$ for one of the two following reasons. First, each integer $m \in \{j, 2(t+1)+j,  \ldots, 2\ell(t+1) +j\}$ is a principal hook length of $\lambda$ or $\mu$ (according to Remark~\ref{remarquedelta}), so we have $\tau_m =1$. Therefore $2k(t+1)+i+j, \ldots,2(k+\ell)(t+1)+i+j$ all belong to $H'$.  Second, each integer $m \in \{2(t+1)-j,\ldots, 2k(t+1)-j\}$ or  $m \in \{2(\ell+1)(t+1)+j, \ldots, 2k(t+1) +j\}$ is not a principal hook length (by virtue of the same Remark or by definition of $\Delta_{t+1-j}$), therefore $\tau_m = -1$. So the integers $2(k-1)(t+1)+i+j, \ldots, i+j$ and $2(k-\ell-1)(t+1)+i-j, \ldots, 2(t+1)+i-j, i-j$ are all elements of $H'$.

Finally we must verify that all the elements of $H'$ also belong to $H_{>0}$, which can be done similarly, as each of the previous steps can be performed in the reverse sense.
\end{proof}

Now, we are able to derive the following lemma.
\begin{lemma}\label{lemme-recap}
If $(\lambda, \mu)$ is in $SC_{(t+1)} \times DD_{(t+1)}$ and $ (n_1,\ldots, n_t):= \varphi(\lambda, \mu)$, then the following equality holds:
\begin{eqnarray}\label{lemme4g}
\prod_i ((2t+2)n_i +i)\prod_{i<j}\left(((2t+2)n_i +i)^ 2-((2t+2)n_j +j)^2\right)\hspace*{2cm}\\= \frac{\delta_\lambda \delta_\mu}{c_1}\prod_{h \in \Delta} \left(1-\frac{2t+2}{h}\right)\left(1-\frac{t+1}{h}\right) \prod_{j=1}^{h-1}\left( 1-\left(\frac{2t+2}{h +\tau_j j} \right)^2\right)\label{lemme4d},
\end{eqnarray}
where $\delta_\lambda$ and $\delta_\mu$ are defined in Section~\ref{defs}.
\end{lemma}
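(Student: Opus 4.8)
The plan is to prove the stated identity by induction on the number of principal hooks of the pair $(\lambda,\mu)$, i.e.\ on $\#\Delta=D(\lambda)+D(\mu)$ (the principal hook lengths of $\lambda$ being odd and those of $\mu$ even, these sets are disjoint). First I would record that the left-hand side is exactly the product already rewritten in \eqref{eqcoeff2}: setting $P(\lambda,\mu):=\prod_i \sigma_i(t+1+\Delta_i)\prod_{i<j}\big((t+1+\Delta_i)^2-(t+1+\Delta_j)^2\big)$, relation \eqref{liennideltai} gives $(2t+2)n_i+i=\sigma_i(t+1+\Delta_i)$, and since the factors of the Vandermonde part are squared, the left-hand side of \eqref{lemme4g} equals $P(\lambda,\mu)$. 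It therefore suffices to establish $P(\lambda,\mu)=\frac{\delta_\lambda\delta_\mu}{c_1}\prod_{h\in\Delta}F(h)$, where I abbreviate by $F(h):=\left(1-\frac{2t+2}{h}\right)\left(1-\frac{t+1}{h}\right)\prod_{j=1}^{h-1}\left(1-\left(\frac{2t+2}{h+\tau_j j}\right)^2\right)$ the factor attached to each principal hook length $h$.

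For the base case $\lambda=\mu=\emptyset$ one has $\Delta=\emptyset$, $\Delta_i=i-t-1$ and $\sigma_i=1$, whence $P(\emptyset,\emptyset)=\prod_{i=1}^t i\prod_{i<j}(i^2-j^2)$. This is a Vandermonde-type (superfactorial) evaluation, and I would check that it matches the normalizing constant $1/c_1$ of \eqref{equaC}; together with $\delta_\emptyset=1$ and the empty product on the right, this settles the base case.

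For the inductive step, assume the formula for all pairs with fewer principal hooks, take $(\lambda,\mu)\neq(\emptyset,\emptyset)$, let $h_{11}=\max\Delta$, and let $(\lambda',\mu')$ be the pair obtained by deleting the corresponding principal hook, as in Lemmas~\ref{lemme 1} and \ref{lemme 2}. I would first note that $\Delta'=\Delta\setminus\{h_{11}\}$, which follows from Remark~\ref{remarquedelta} (only the maximum in the congruency class of $h_{11}$ is affected by the deletion). Then I would chain the three available lemmas: Lemma~\ref{lemme 1} expresses $P(\lambda,\mu)/P(\lambda',\mu')$ as $\left(1-\frac{2t+2}{h_{11}}\right)\left(1-\frac{t+1}{h_{11}}\right)$ times $\prod_{a\in E}\frac{a+2t+2}{a}$, where $E$ is the set of \eqref{defE}; Lemma~\ref{lemme 2} identifies $E$ with $max_{2t+2}(H)$ for a $2t+2$-compact set $H$ whose positive part is exactly $\{h_{11}+\tau_m m:1\le m\le h_{11}-1\}$; and Lemma~\ref{lemme han} turns $\prod_{a\in E}\frac{a+2t+2}{a}$ into $-\prod_{m=1}^{h_{11}-1}\left(1-\left(\frac{2t+2}{h_{11}+\tau_m m}\right)^2\right)$. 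Combining these yields $P(\lambda,\mu)/P(\lambda',\mu')=-F(h_{11})$.

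The spurious sign is then absorbed by the Durfee statistic: deleting the largest principal hook lowers the Durfee length of exactly one of $\lambda,\mu$ by one, so $\delta_{\lambda'}\delta_{\mu'}=-\delta_\lambda\delta_\mu$, and the $-1$ produced by Lemma~\ref{lemme han} cancels against this flip. Feeding in the induction hypothesis $P(\lambda',\mu')=\frac{\delta_{\lambda'}\delta_{\mu'}}{c_1}\prod_{h\in\Delta'}F(h)$ and using $\Delta=\Delta'\sqcup\{h_{11}\}$ then gives $P(\lambda,\mu)=\frac{\delta_\lambda\delta_\mu}{c_1}\prod_{h\in\Delta}F(h)$, as wanted. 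Since the genuinely heavy computations are already packaged in Lemmas~\ref{lemme 1} and \ref{lemme 2}, the delicate points left for this proof are the two simultaneous sign sources (the $-1$ of Lemma~\ref{lemme han} versus the Durfee parity), which is where a sign error is easiest to make and the step I would verify most carefully, and the coherence of the $\tau_j$: since these depend on membership in the principal-hook set, I would check that for each $h\in\Delta'$ and each $j\le h-1$ one has $j\in\Delta\iff j\in\Delta'$ (true because $\Delta$ and $\Delta'$ differ only in $h_{11}>h$), so that every $\tau_j$ in the final product may legitimately be read off the single set $\Delta$.
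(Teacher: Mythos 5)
Your proposal is correct and follows essentially the same route as the paper: rewrite the left-hand side as $P=\prod_i\sigma_i(t+1+\Delta_i)\prod_{i<j}\bigl((t+1+\Delta_i)^2-(t+1+\Delta_j)^2\bigr)$ via \eqref{liennideltai}, induct on $\#\Delta$ by removing the largest principal hook, chain Lemmas~\ref{lemme 1}, \ref{lemme 2} and \ref{lemme han} to get $P/P'=-F(h_{11})$, absorb the sign into $\delta_\lambda\delta_\mu$, and evaluate the empty base case as $1/c_1$. Your two extra verifications (the Durfee-parity flip cancelling the $-1$ from Lemma~\ref{lemme han}, and the stability of the $\tau_j$ under passing from $\Delta$ to $\Delta'$) are points the paper treats more tersely, and they are both correct.
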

\begin{proof}
The strategy is to do an induction on the number of principal hooks of the pair $(\lambda,\mu)$, by deleting at each step the largest principal hook $h_{11}$ in $\Delta$. To this aim, we denote by $P$ the term in \eqref{lemme4g} and, by using $\varphi$, we transform $P$ into products involving the numbers $\Delta_i$, as we did when we established \eqref{eqcoeff2}:
\begin{equation*}
P= \prod_i \sigma_i(t+1+\Delta_i)\prod_{i<j}((t+1+\Delta_i)^2-(t+1+\Delta_j)^2).
\end{equation*}
By using notations before Lemma~\ref{lemme 1}, we define in the same way
\begin{equation*}
P':= \prod_i \sigma_i'(t+1+\Delta_i')\prod_{i<j}((t+1+\Delta_i')^2-(t+1+\Delta_j')^2).
\end{equation*}
Lemma~\ref{lemme 1} gives:
\begin{multline}\label{PP'}
P= \left(\frac{h_{11}}{h_{11}-2t-2}\frac{2h_{11}}{2h_{11}-2t-2}\frac{h_{11}+t+1}{h_{11}-t-1}\right)\prod_{j \neq i_0} \frac{(h_{11}+ \Delta_j +2t+2)(h_{11}-\Delta_j)}{(h_{11}+ \Delta_j )(h_{11}-\Delta_j -2t-2)}
\\\times  \left(1-\frac{2t+2}{h_{11}}\right)\left(1-\frac{t+1}{h_{11}}\right) \times P'.
\end{multline} 

Then Lemma~\ref{lemme 2} insures that the set $E$ defined as \eqref{defE} is the $max_{2t+2}(H)$ of a unique $2t+2$-compact set $H$, with a subset of positive elements $H_{>0}=\{h_{11}- \tau_m m, 1\leq m \leq h_{11}-1\}$. So we can apply Lemma~\ref{lemme han} to show that
\begin{multline*}
\left(\frac{h_{11}}{h_{11}-2t-2}\frac{2h_{11}}{2h_{11}-2t-2}\frac{h_{11}+t+1}{h_{11}-t-1}\right)\prod_{j \neq i_0} \frac{(h_{11}+ \Delta_j +2t+2)(h_{11}-\Delta_j)}{(h_{11}+ \Delta_j )(h_{11}-\Delta_j -2t-2)}
\\=-\prod_{j=1}^{h_{11}-1}\left( 1-\left(\frac{2t+2}{h_{11} +\tau_j j} \right)^2\right).\hspace*{5cm}
\end{multline*}
So \eqref{PP'} becomes:
\begin{equation*}
P=-\left(1-\frac{2t+2}{h_{11}}\right)\left(1-\frac{t+1}{h_{11}}\right)\prod_{j=1}^{h_{11}-1}\left( 1-\left(\frac{2t+2}{h_{11} +\tau_j j} \right)^2\right) \times P'.
\end{equation*}
  Next we do an induction on the cardinality of $\Delta$ by deleting in $\lambda$ or $\mu$ the largest element of $\Delta$. There are $D(\lambda)+D(\mu) $ steps in the induction, each of which giving rise to a minus sign. This explains the term $\delta_\lambda \delta_\mu$.  The base case corresponds to empty partitions $\lambda$ and $\mu$. In this case $\Delta_i =i-t-1$, $1\leq i\leq t$, therefore
\begin{eqnarray*}
\prod_i \sigma_i(t+1+\Delta_i)\prod_{i<j}\left((t+1+\Delta_i)^ 2-(t+1+\Delta_i)^2\right)=\prod_i i \prod_{i<j}\left( i^2-j^2\right)= \frac{1}{c_1}, 
\end{eqnarray*}
where we recall that $c_1$ is defined in \eqref{equaC}.
\end{proof}

\subsection{Proof of Theorem~\ref{theoremeintro}}\label{section3.3}

We can actually prove the following result, which will be seen to be equivalent to Theorem~\ref{theoremeintro}.
\begin{Theorem}\label{thmprincipal}
The following identity holds for any complex number t:
\begin{multline}\label{eqprincip}
\prod\limits_{n \geq 1}(1-x^n)^{2t^2+t} = \sum_{\lambda,\mu} \delta_\lambda \delta_\mu x^{|\lambda|+ |\mu|}\\\times \prod\limits_{h \in \Delta} \left(1-\frac{2t+2}{h}\right)\left(1-\frac{t+1}{h}\right) \prod\limits_{j=1}^{h-1} \left(1-\left(\frac{2t+2}{h + \tau_j j} \right)^2 \right),
\end{multline}
where the sum ranges over pairs $(\lambda, \mu)$ of partitions,  $\lambda$ being self-conjugate and $\mu$ being doubled distinct.
\end{Theorem}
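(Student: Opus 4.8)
The plan is to combine the rewriting \eqref{eqcoeff2} of Macdonald's identity with the per-pair simplification of Lemma~\ref{lemme-recap}, and then to remove the core restriction on the summation by a polynomiality argument in $t$. First I would fix an integer $t\geq 2$ and substitute the identity of Lemma~\ref{lemme-recap} into \eqref{eqcoeff2}. Since \eqref{eqcoeff2} carries a global factor $c_1$ while Lemma~\ref{lemme-recap} produces the factor $\delta_\lambda\delta_\mu/c_1$ for each pair, the constant $c_1$ cancels, and one obtains exactly the right-hand side of \eqref{eqprincip}, but with the sum restricted to pairs $(\lambda,\mu)\in SC_{(t+1)}\times DD_{(t+1)}$:
\begin{multline*}
\prod_{n\geq 1}(1-x^n)^{2t^2+t} = \sum_{(\lambda,\mu)\in SC_{(t+1)}\times DD_{(t+1)}} \delta_\lambda\delta_\mu\, x^{|\lambda|+|\mu|}\\\times\prod_{h\in\Delta}\left(1-\frac{2t+2}{h}\right)\left(1-\frac{t+1}{h}\right)\prod_{j=1}^{h-1}\left(1-\left(\frac{2t+2}{h+\tau_j j}\right)^2\right).
\end{multline*}
I would also note that the modularity factor $x^{(2t^2+t)/24}$ coming from $\eta^{2t^2+t}$ has already disappeared in the passage to \eqref{eqcoeff2}, so the left-hand side is the plain product. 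This establishes \eqref{eqprincip} for every integer $t\geq 2$, \emph{provided} the core restriction on the index set can be lifted.

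The second step is to extract the coefficient of a fixed power $x^N$ from both sides and argue by polynomiality in $t$. On the left, writing $\prod_{n\geq 1}(1-x^n)^{s}=\sum_k f_k(s)x^k$, the coefficient $f_N(2t^2+t)$ is a polynomial in $t$. On the right, the sum over pairs with $|\lambda|+|\mu|=N$ is finite, and for each \emph{fixed} pair the set $\Delta$ of principal hook lengths and the signs $\tau_j$ are intrinsic to the pair and independent of $t$; hence every factor $1-\tfrac{2t+2}{h}$, $1-\tfrac{t+1}{h}$ and $1-\bigl(\tfrac{2t+2}{h+\tau_j j}\bigr)^2$ is a polynomial in $t$, and so is the whole coefficient of $x^N$. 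The crucial observation is that any partition of weight at most $N$ has all of its hook lengths at most $N$ (the corner hook length $\lambda_1+\ell(\lambda)-1$ is bounded by $|\lambda|$); therefore, as soon as the integer $t$ satisfies $t+1>N$, every pair $(\lambda,\mu)$ with $|\lambda|+|\mu|=N$ is automatically a pair of $(t+1)$-cores, so the restricted sum above coincides with the full sum over all self-conjugate $\lambda$ and doubled distinct $\mu$. Consequently, for all integers $t\geq\max(2,N)$ the two polynomials in $t$ (the coefficients of $x^N$ on the two sides of \eqref{eqprincip}) take the same value; agreeing at infinitely many integers, they are identically equal, so the coefficients of $x^N$ match for every complex $t$. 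Since $N$ is arbitrary, \eqref{eqprincip} holds as an identity of formal power series for all complex $t$.

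The genuinely hard work has already been carried out in the bijection $\varphi$ (Theorem~\ref{phi}) and in Lemma~\ref{lemme-recap}; for the present statement the remaining delicate points are essentially bookkeeping. The main thing to watch is that the per-pair product in \eqref{eqprincip} really is $t$-independent apart from the displayed polynomial factors, that is, that $\Delta$ and the $\tau_j$ are attributes of the pair alone, so that the coefficient of each $x^N$ is genuinely polynomial in $t$. The other essential ingredient is the elementary remark that ``small partitions are cores for large $t$'', which is precisely what lets us trade the core-restricted sum produced by Macdonald's identity for the unrestricted sum appearing in the statement. I emphasize that no vanishing of individual non-core terms is required: for each fixed degree one simply chooses $t$ large enough that no non-core pair contributes at all, and polynomiality then transports the identity to every complex value of $t$.
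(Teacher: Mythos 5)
Your proposal is correct, and the first step (feeding Lemma~\ref{lemme-recap} into \eqref{eqcoeff2} to get the identity over $SC_{(t+1)}\times DD_{(t+1)}$ for integer $t\geq 2$, with $c_1$ cancelling) is exactly what the paper does. Where you genuinely diverge is in how the core restriction is removed. The paper proves a pointwise vanishing statement: for each fixed integer $t$, the product $Q=\prod_{h\in\Delta}\bigl(1-\frac{2t+2}{h}\bigr)\bigl(1-\frac{t+1}{h}\bigr)\prod_{j=1}^{h-1}\bigl(1-\bigl(\frac{2t+2}{h+\tau_j j}\bigr)^2\bigr)$ is identically zero whenever $(\lambda,\mu)$ is not a pair of $(t+1)$-cores, which it verifies by checking that a nonvanishing $Q$ forces $\Delta$ to satisfy conditions (i)--(iii) of Remark~\ref{remarquedelta}; only then does it invoke polynomiality in $t$ of the coefficient of each $x^m$. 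You instead observe that a partition of weight at most $N$ has all hook lengths at most $N$, so for $t\geq\max(2,N)$ the restricted and unrestricted sums already agree in degree $N$, and you run the polynomiality argument degree by degree with a $t$-threshold depending on $N$. Both routes are valid; yours is shorter and avoids the case analysis entirely. What the paper's route buys in exchange is the vanishing mechanism itself, which is not a consequence of the sum identity and is reused (in reverse) in the proof of Theorem~\ref{equi} to recover the Macdonald identities in types $\widetilde{B}$ and $\widetilde{BC}$ from Theorem~\ref{theoremeintro}; your argument, while sufficient for Theorem~\ref{thmprincipal}, does not supply that tool. Your side remarks (the cancellation of the modularity factor $x^{(2t^2+t)/24}$, the $t$-independence of $\Delta$ and the $\tau_j$, and the nonvanishing of the denominators $h+\tau_j j$) are all accurate.
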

\begin{proof}
Thanks to the Macdonald formula~\eqref{equaC} and Lemma~\ref{lemme-recap}, equation~\eqref{eqprincip} holds if the sum on the right-hand side is over pairs $(\lambda, \mu) \in SC_{(t+1)} \times DD_{(t+1)}$ and if $t$ is a positive integer. We will show that the product 
\begin{equation*} Q :=\prod_{h \in \Delta}\left( 1-\frac{2t+2}{h}\right)\left(1-\frac{t+1}{h}\right) \prod_{j=1}^{h-1} \left(1-\left(\frac{2t+2}{h +\tau_j j} \right)^2 \right)
\end{equation*} vanishes if the pair $(\lambda, \mu)$ is not a pair of $t+1$-cores. Indeed, set $(\lambda, \mu) \in SC \times DD$, and let $\Delta$ be the set of principal hook lengths of $\lambda$ and $\mu$. We show that $Q$ vanishes unless $\Delta$ satisfies the three hypotheses of (iii) in Remark~\ref{remarquedelta}. Assume $Q \neq 0$.

First, let $h>2t+2$  be an element of $\Delta$.  If $j:=h-2t-2$  was not a principal hook length, then the term corresponding to $j$ in the second product of $Q$ would vanish by definition of $\tau_j$. So (ii) is satisfied.

Second, let $k, k',i$ be nonnegative integers such that $1\leq i \leq t$. If $(2k+1)(t+1)+i$ and $(2k'+1)(t+1)-i$ both belong to $\Delta$, then by induction and according to the previous case, $t+1+i$ and $t+1-i$ belong to $\Delta$. But then, the term $1-\left(\frac{2t+2}{(t+1+i)+(t+1-i)}\right)^2$ vanishes, which contradicts $Q \neq 0$. So $(2k+1)(t+1)+i$ and $(2k'+1)(t+1)-i$ can not be both principal hook lengths. So (i) is satisfied.

Third, if $\Delta$ contains multiples of $t+1$, we denote by $h$ the smallest such principal hook length. If  $h= t+1$ or $h=2t+2$, then the first term of the product $Q$ would vanish. Otherwise, $h-2t-2$ does not belong to $\Delta$ by minimality, and so the term corresponding to $j=h-2t-2$ in the second product of $Q$  would vanish. Therefore $\Delta$ can not contain multiples of $t+1$.

 According to Remark~\ref{remarquedelta} (iii), if $Q \neq 0$, then  $(\lambda, \mu)$ is a pair of $t+1$-cores. So formula \eqref{eqprincip} remains true for any positive integer $t$ if the sum ranges over $SC \times DD$.
 
  To conclude, we give a polynomiality argument which generalizes \eqref{eqprincip} to all complex numbers $t$. To this aim, we can use the following formula:
\begin{equation}\label{eqex}
\prod_{k \geq 1} \frac{1}{1-x^k} = \mbox{exp}\left(\sum_{k \geq 1} \frac{x^k}{k(1-x^k)}\right),
\end{equation}
in order to rewrite the left-hand side of~\eqref{eqprincip} in the form
\begin{equation}\label{cotegauche}
\mbox{exp}\left(-(2t ^2+t)\sum_{k \geq 1} \frac{x^k}{k(1-x^k)}\right).
\end{equation}

Let $m$ be a nonnegative integer. The coefficient $C_m(t)$ of $X^m$ on the left-hand side of~\eqref{eqprincip} is a polynomial in $t$, according to~\eqref{cotegauche}, as is the coefficient $D_m(t)$ of $X^m$ on the right-hand side. Formula~\eqref{eqprincip} is true for all integers $t \geq 2$, it is therefore still true for any complex number $t$.
\end{proof}

Let $(\lambda, \mu)$ be in $SC \times DD$, with set of principal hook lengths $\Delta$. We denote by $2\Delta$ the set of elements of $\Delta$ multiplied by $2$. Note that we can bijectively associate to $(\lambda, \mu)$ a partition $\nu \in DD$ with set of principal hook lengths $2\Delta$, as illustrated below.
\begin{figure}[h!]
\includegraphics[scale=1.2]{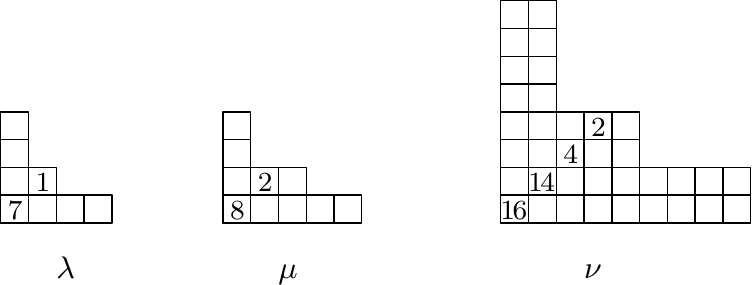}
 \caption{\label{fig8}Example of construction of $\nu$.}
\end{figure}
\begin{Theorem}\label{bijcouple}
The partition $\nu$ satisfies $|\lambda|+|\mu| = |\nu|/2$, $\delta_\lambda \delta_\mu = \delta_\nu$, and
\begin{multline}\prod\limits_{h \in \Delta} \left(1-\frac{2t+2}{h}\right)\left(1-\frac{t+1}{h}\right) \prod\limits_{j=1}^{h-1}\left(  1-\left(\frac{2t+2}{h +\tau_j j} \right)^2 \right)\\= \prod_{h \in \nu} \left(1-\frac{2t+2}{h \, \varepsilon_h}\right),\label{eqbij}
\end{multline}
 where $\varepsilon_h$ is equal to $-1$ if $h$ is the hook length of a box  strictly above the principal diagonal, and to $1$ otherwise.
\end{Theorem}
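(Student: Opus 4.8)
The plan is to prove the three assertions of Theorem~\ref{bijcouple} separately, exploiting the explicit correspondence $\Delta \mapsto 2\Delta$ together with the structural properties of doubled distinct partitions listed after the definition of $DD$ in Section~\ref{propdd}. First I would settle the two easy identities. Since the principal hook lengths of a doubled distinct partition $\nu$ are exactly the sizes of its principal hooks, and the weight $|\nu|$ is the sum of its principal hook lengths, the passage $\Delta \mapsto 2\Delta$ doubles each principal hook length and hence doubles the total weight; this gives $|\nu| = 2(|\lambda|+|\mu|)$, i.e. $|\lambda|+|\mu| = |\nu|/2$. For the sign statement, I would observe that the Durfee length $D(\nu)$ equals the number of principal hooks of $\nu$, which is $\#(2\Delta) = \#\Delta = D(\lambda)+D(\mu)$; therefore $\delta_\nu = (-1)^{D(\nu)} = (-1)^{D(\lambda)}(-1)^{D(\mu)} = \delta_\lambda\delta_\mu$.

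The substance of the theorem is the product identity \eqref{eqbij}. The strategy is to match, for each principal hook length $h \in \Delta$, the local factor on the left with the contribution of the corresponding principal hook $2h$ of $\nu$ on the right. The key is to read off from the structural properties in Section~\ref{propdd} exactly which hook lengths of $\nu$ arise from the principal hook of size $2h$. Concretely, for a box on the diagonal with principal hook length $2h$ in $\nu$, the boxes of its arm and leg have hook lengths that, by the three bulleted symmetry properties of doubled distinct partitions, come in the pattern: one diagonal box of hook length $2h$ itself, a box of hook length $h$ (the ``half'' coming from the second property, $h_{(i,i)} = 2\,h_{(i,D(\mu)+1)}$), and pairs of boxes whose hook lengths are of the form $2h \pm 2m$ governed by whether $m$ is itself a principal hook length. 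I would show that the factor $\left(1-\frac{2t+2}{2h}\right) = \left(1-\frac{t+1}{h}\right)$ accounts for the $\varepsilon_h = 1$ diagonal box, that $\left(1-\frac{2t+2}{h}\right)$ is exactly the $\varepsilon=+1$ contribution of the half-hook box of length $h$, and that the product $\prod_{j=1}^{h-1}\left(1-\left(\frac{2t+2}{h+\tau_j j}\right)^2\right)$ reorganizes into the product over the remaining boxes of $\nu$'s principal hook, where the factor $1-\left(\frac{2t+2}{h+\tau_j j}\right)^2 = \left(1-\frac{2t+2}{h+\tau_j j}\right)\left(1+\frac{2t+2}{h+\tau_j j}\right)$ splits into one box below the diagonal (giving $\varepsilon_h = 1$) and one box strictly above the diagonal (giving $\varepsilon_h = -1$, hence the $+$ sign, matching $1 - \frac{2t+2}{h\varepsilon_h} = 1 + \frac{2t+2}{h}$).

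The cleanest way to carry this out is by induction on the number of principal hooks, peeling off the largest principal hook of $\nu$ (equivalently the largest $h \in \Delta$) and using that removing it transforms the remaining data exactly as the deletion process in Lemma~\ref{lemme 2}; the bookkeeping of $\tau_j$ there is precisely the bookkeeping of which arm/leg hook lengths of $\nu$ are attached to the outermost principal hook. The main obstacle I anticipate is the combinatorial matching in this middle step: one must verify that as $j$ ranges over $\{1,\dots,h-1\}$, the values $h + \tau_j j$ enumerate, with correct multiplicity and correct above/below-diagonal type, exactly the non-principal hook lengths of $\nu$ lying in the outermost principal hook, so that the quadratic factors reassemble into the linear $\varepsilon_h$-weighted factors of the right-hand side. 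I would handle this by a direct geometric description of the hook lengths in a single principal hook of $\nu$ in terms of $\Delta$, reading the symmetry relations of Section~\ref{propdd} in both directions (arm versus leg), and checking that the $+1$ and $-1$ choices of $\varepsilon_h$ correspond respectively to the below-diagonal and above-diagonal boxes paired by conjugation symmetry.
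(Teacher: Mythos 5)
Your plan follows the paper's proof essentially verbatim: the weight and sign identities are immediate from the construction of $\nu$, and the product identity is proved by induction on $\#\Delta$, matching the local factor of the largest $h\in\Delta$ against the boxes of the outermost principal hook of $\nu$ --- the diagonal box of hook length $2h$ gives $1-\tfrac{t+1}{h}$, the box $(1,D(\nu)+1)$ of hook length $h$ gives $1-\tfrac{2t+2}{h}$, and the remaining arm/leg boxes pair up by the conjugation symmetry of doubled distinct partitions into equal hook lengths with opposite $\varepsilon$, which is exactly how the paper carries out the computation ($h_{\nu,i,1}=\nu_i-i+\nu_1^*=\Delta^1+\Delta^i$ for $2\le i\le D(\nu)$, and $=\Delta^1-\nabla_{i'}$ beyond the Durfee square). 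One slip: the paired hook lengths are not ``of the form $2h\pm 2m$'' but rather $h+m$ for $m\in\Delta$ and $h-m$ for $m\notin\Delta$, i.e.\ $h+\tau_m m$; since your subsequent factorization statement uses these correct values, this does not affect the argument.
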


\begin{proof}

The two first properties are clear by definition of $\nu$ and $\delta_\nu$, the difficult point is \eqref{eqbij}. We prove it by induction on the number of elements of $\Delta$. The result is trivial if $\lambda$ and $\mu$ are both empty. If $\lambda$ or $\mu$ is not empty, we denote by $\Delta^1 > \Delta^2> \cdots>\Delta^\ell$ the elements of $\Delta$, and write $\nu =(\nu_1, \nu_2, \ldots, \nu_k$). We will show the following identity:
\begin{equation}\label{eq5.1}
\left(1-\frac{2t+2}{\Delta^1}\right)\left(1-\frac{t+1}{\Delta^1}\right) \prod\limits_{j=1}^{\Delta^1-1}\left(  1-\left(\frac{2t+2}{\Delta^1 +\tau_j j} \right)^2 \right)= \prod\limits_{h} \left(1-\frac{2t+2}{h \, \varepsilon_h}\right),
\end{equation}
where the product on the right ranges over the boxes of the largest principal hook of $\nu$. This is enough to establish the induction.

We denote by $h_{\nu, i,j}$ the hook length of the box $(i,j) $ in $\nu$.
According to the definition of $\nu$, we have $ h_{\nu,1,1}=2\Delta^1$ and $D(\nu)=\ell=\# \Delta$. As $\nu$ is a doubled distinct partition, a direct computation of the hook length gives $h_{\nu,1,\ell+1}= \Delta^1$. Therefore \begin{equation} \label{eq5.2} \left(1-\frac{2t+2}{\Delta^1}\right)\left(1-\frac{t+1}{\Delta^1}\right)= \left(1-\frac{2t+2}{h_{\nu,1,1} \, \varepsilon_{h_{\nu,1,1}}}\right)\left(1-\frac{2t+2}{h_{\nu,1,\ell+1} \, \varepsilon_{h_{\nu,1,\ell+1}}}\right).\end{equation}

Let $i$ be in $\{2,\ldots, \ell\}$. As $\nu$ is a doubled distinct partition, we have $h_{\nu,i,1}= h_{\nu,1,i}$ by direct computation. By construction of $\nu$, we have $h_{\nu,i,1}= \nu_i-i+\nu_1^*=\Delta^i+\Delta^1$. As $\varepsilon_{h_{\nu,i,1}}= -\varepsilon_{h_{\nu,1,i}}$, we get:
\begin{equation} \label{eq5.3}  1-\left(\frac{2t+2}{\Delta^1 +\Delta^i} \right)^2=\left(1-\frac{2t+2}{h_{\nu,i,1} \, \varepsilon_{h_{\nu,i,1}}}\right)\left(1-\frac{2t+2}{h_{\nu,1,i} \, \varepsilon_{h_{\nu,1,i}}}\right).\end{equation}

Let $i$ be in $\{\ell+1,\ldots, \Delta^1\}$, write $i=\ell+i'$ with $i'$ in $\{1,\ldots, \Delta^1-\ell\}$, and denote by $\nabla_{\Delta^1-\ell}>\ldots>\nabla_2>\nabla_1$ the integers between $1$ and $\Delta^1$ which do not belong to $\Delta$. For example, in Figure~\ref{fig8}, we have $(\nabla_{4},\nabla_3,\nabla_2,\nabla_1)=(6,5,4,3)$. As $\nu$ is a doubled distinct partition, we have $h_{\nu,i,1}= h_{\nu,1,i+1}$. By construction of $\nu$, we have $\nu_i-i=-\nabla_{i'}$, and so $h_{\nu,i,1}= \nu_i-i+\nu_1^*=\Delta^1-\nabla_{i'}$. As $\varepsilon_{h_{\nu,i,1}}= -\varepsilon_{h_{\nu,1,i+1}}$, we derive:
\begin{equation} \label{eq5.4}   1-\left(\frac{2t+2}{\Delta^1 -\nabla_{i'}} \right)^2=\left(1-\frac{2t+2}{h_{\nu,i,1} \, \varepsilon_{h_{\nu,i,1}}}\right)\left(1-\frac{2t+2}{h_{\nu,1,i+1} \, \varepsilon_{h_{\nu,1,i+1}}}\right).\end{equation}

The product of \eqref{eq5.2}, \eqref{eq5.3}  and \eqref{eq5.4} over all $i \in \{1,\ldots, \Delta^1\}$ gives \eqref{eq5.1}, as we have:
\begin{equation*}
\prod\limits_{j=1}^{\Delta^1-1}\left(  1-\left(\frac{2t+2}{\Delta^1 +\tau_j j} \right)^2 \right)= \prod_{i =2}^\ell \left(1-\left(\frac{2t+2}{\Delta^1 +\Delta^i} \right)^2 \right) \prod_{i'=1}^{\Delta^1-\ell} \left( 1-\left(\frac{2t+2}{\Delta^1 -\nabla_{i'}} \right)^2 \right).
\end{equation*}
\end{proof}

 Theorem~\ref{theoremeintro} straightforwardly follows from Theorems~\ref{thmprincipal} and \ref{bijcouple}.

\subsection{Some applications and adaptation to types $\widetilde{B}$ and $\widetilde{BC}$.}\label{section3.4}\label{section3}
We give here some direct applications of Theorem~\ref{theoremeintro}.  First, taking $t=-1$ in \eqref{eqtheoremeintro} yields the following famous expansion (see for example \cite[p. 69]{EC1}), where the sum ranges over partitions with distinct parts:
\begin{equation} \label{partpartdistinct}
\prod_{n \geq 1}(1-x^n) = \sum_\lambda (-1)^{\#\{\scriptsize{parts~of~}\lambda\}} x ^{|\lambda|}.
\end{equation}

Next, recall the classical hook formula (see for instance~\cite[Corollary 7.12.6]{EC})
\begin{equation*}
\sum_{\stackrel{\lambda \in \mathcal{P}}{|\lambda|=n}}\prod_{h \in \mathcal{H}(\lambda)} \frac{1}{h^2}=\frac{1}{n!},
\end{equation*}
which can be proved through the Robinson--Schensted--Knuth correspondence and which is famous in representation theory for the Coxeter group of type $A$ (or symmetric group). From this formula, and by extracting coefficients while using \eqref{eqtheoremeintro} and \eqref{nekrasov}, we can derive the following result, which can be seen as a \emph{symplectic hook formula}, as is explained by the denominator on the right-hand side, which is the cardinal of the Coxeter group of type $B_n$.
\begin{corollary} For any positive integer $n$, we have:
\begin{equation}
\sum_{\stackrel{\lambda \in DD}{|\lambda|= 2n}} \prod_{h \in \mathcal{H}(\lambda)}\frac{1}{h} = \frac{1}{2^n n!}.
\end{equation}
\end{corollary}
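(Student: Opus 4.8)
The plan is to derive the symplectic hook formula as a limiting coefficient extraction from the main expansion \eqref{eqtheoremeintro}, paralleling exactly how the classical hook formula follows from the Nekrasov--Okounkov formula \eqref{nekrasov}. The key observation is that both identities express a power of $\prod_{k\geq 1}(1-x^k)$ as a sum over partitions weighted by products over hook lengths, and the hook formula arises by comparing the two sides after a suitable substitution in the parameter $t$ (or $z$) that isolates the top-degree behaviour of the hook products.

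First I would rewrite the right-hand side of \eqref{eqtheoremeintro} by substituting $2t+2 = s$ and treating $s$ as the free complex parameter, so that the factor attached to each box becomes $1 - s/(h\,\varepsilon_h)$. Expanding the product over $h \in \mathcal{H}(\lambda)$ as a polynomial in $s$, the coefficient of the highest power $s^{|\lambda|}$ is $\prod_{h \in \mathcal{H}(\lambda)} (-1/(h\,\varepsilon_h)) = (-1)^{|\lambda|}\prod_h 1/(h\,\varepsilon_h)$. Meanwhile the left-hand side $\prod_{k\geq 1}(1-x^k)^{2t^2+t}$ must be re-expressed through \eqref{eqex} as $\exp\!\big(-(2t^2+t)\sum_{k\geq 1} x^k/(k(1-x^k))\big)$; since $2t^2+t$ is, up to the substitution, a quadratic polynomial in $s$, I would extract from both sides the coefficient of $x^n$ and then the coefficient of the appropriate power of $s$. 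The degree count works because a partition $\lambda \in DD$ with $|\lambda| = 2n$ contributes exactly $2n$ hook factors, so the leading $s$-behaviour of the $x^{2n}$ term selects precisely those $\lambda$.

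The next step is the bookkeeping of signs and the $\varepsilon_h$ factors. For a doubled distinct partition, the hook lengths come in matched pairs $(h_{i,j}, h_{j,i})$ off the diagonal with opposite values of $\varepsilon_h$, together with the principal (diagonal) hooks; using the structural properties recorded just before Proposition~\ref{DD}, the product $\prod_h 1/(h\,\varepsilon_h)$ simplifies so that the $\varepsilon_h = -1$ contributions from boxes strictly above the diagonal cancel against or combine with their mirror images. I expect the $\delta_\lambda = (-1)^{D(\lambda)}$ sign and the $(-1)^{|\lambda|}$ from the leading coefficient to conspire so that the net sign on each term is positive, matching the manifestly positive right-hand side $1/(2^n n!)$. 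To pin down the constant $1/(2^n n!)$ itself, I would compare with the analogous leading-coefficient extraction from \eqref{nekrasov}: there the coefficient of $x^n z^n$ gives $\sum_{|\lambda|=n}\prod 1/h^2 = 1/n!$ by the classical hook formula, and the $2^n$ arises from the factor $2t+2$ versus the quadratic exponent $2t^2+t = (2t+2)(t)/ \cdots$, i.e.\ from the ratio of the linear coefficient in $s$ appearing in the box weight to the coefficient governing the exponent.

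The main obstacle will be making the sign and cancellation argument in the second step fully rigorous rather than heuristic: one must verify that after expanding the hook products and collecting the leading power of $s = 2t+2$, the surviving terms are exactly indexed by $\lambda \in DD$ with $|\lambda| = 2n$ and that the $\varepsilon_h$ signs combine with $\delta_\lambda$ to give $+1$ uniformly. I would handle this by tracking the contribution of each principal hook and its associated off-diagonal arm/leg boxes separately, using the three bulleted identities on hook lengths of doubled distinct partitions; the off-diagonal pairs contribute $(h\varepsilon_h)(h(-\varepsilon_h)) = -h^2$, and counting these pairs against the number of principal hooks $D(\lambda)$ should reproduce the $\delta_\lambda$ sign and the power of $2$. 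Once the leading coefficients of $s$ in both the box weight and the exponent are correctly identified, the constant $1/(2^n n!)$ follows directly from the classical hook formula, completing the proof.
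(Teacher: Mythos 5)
Your proposal is correct in substance, but it takes a different route from the one the paper actually carries out. The paper alludes to your strategy in one sentence (``by extracting coefficients while using \eqref{eqtheoremeintro} and \eqref{nekrasov}'') and then deliberately postpones the proof, deriving the identity instead as the $t=1$ case of Corollary~\ref{cor9}, which is obtained from the two-parameter generalization of Theorem~\ref{generalisation} via the Littlewood decomposition, the degeneration $2z+2=-b/y$, $y\to 0$ of Corollary~\ref{cor6}, and a comparison of coefficients of $b^{2n}x^{n}$, followed by the sign identity \eqref{eqsign}. Your argument is more elementary: setting $s=2t+2$, so that $2t^2+t=(s-1)(s-2)/2$, the coefficient of $x^ns^{2n}$ on the right of \eqref{eqtheoremeintro} is $\sum_{\lambda\in DD,\,|\lambda|=2n}\delta_\lambda\prod_h 1/(h\varepsilon_h)$ (each such $\lambda$ has exactly $2n$ hooks, so no selection subtlety arises), while on the left it is $(-1)^n/(2^n n!)$, either by a direct computation via \eqref{eqex} or, as you propose, by reading off the leading coefficient of \eqref{nekrasov} with the classical hook formula and rescaling by the leading coefficient $1/2$ of the exponent as a quadratic in $s$. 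What your route buys is independence from Section~4 entirely; what the paper's route buys is that the formula drops out of a machine that simultaneously produces the whole family of Corollaries~4.7--4.13.

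Two small corrections to your write-up. First, the relevant power of $x$ is $x^n$, not $x^{2n}$, since the weight in \eqref{eqtheoremeintro} is $x^{|\lambda|/2}$. Second, the factor $2^n$ does \emph{not} come from the hook-pairing in the last step: it comes entirely from the leading coefficient $1/2$ of $2t^2+t$ as a polynomial in $s=2t+2$. The pairing of off-diagonal boxes is only needed to prove $\delta_\lambda\prod_{h}\varepsilon_h=(-1)^{|\lambda|/2}$ (the paper's \eqref{eqsign}), which converts $\sum\delta_\lambda\prod 1/(h\varepsilon_h)=(-1)^n/(2^n n!)$ into the stated positive formula; this sign identity does follow from your pairing, since for $\lambda\in DD$ with $|\lambda|=2n$ and Durfee length $D$ one counts $n-D$ boxes strictly above the diagonal, giving $\prod_h\varepsilon_h=(-1)^{n-D}$ and hence $\delta_\lambda\prod_h\varepsilon_h=(-1)^n$.
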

We do not detail yet the proof of this formula, as it is also a direct consequence of Corollary~\ref{cor9} below.

Finally, we can prove the following result,
which establishes a surprising link between the Macdonald formulas in types $\widetilde{C}$, $\widetilde{B}$, and $\widetilde{BC}$.

\begin{Theorem}\label{equi}
The following families of formulas are all generalized by Theorem~\ref{theoremeintro}:
\begin{itemize}\itemsep-2.5pt
\item[(i)]the Macdonald formula \eqref{equaC} in type $\widetilde{C}_t$ for any integer $t \geq 2$; \medskip
\item[(ii)]the Macdonald formula in type $\widetilde{B}_t$ for any integer $t \geq 3$:
\begin{equation*}\label{equaB}
\eta(x)^{2t^2+t} = c_1 \sum_{{\bf v}} x^{\|{\bf v}\|^ 2/8(2t-1)} \prod_i v_i \prod_{i<j}(v_i^2-v_j^2) , 
\end{equation*}
where the sum ranges over $t$-tuples ${\bf v} :=(v_1,\ldots, v_t) \in \mathbb{Z}^t$ such that $v_i \equiv 2i-1 \mbox{~mod~} 4t-2$ and $v_1+\cdots+v_t= t^2 \mbox{~mod~} 8t-4$;\medskip
\item[(iii)]the Macdonald formula in type $\widetilde{BC}_t$ for any integer $t\geq 1$:\end{itemize}
\begin{equation*}\label{equaBC}
\eta(x)^{2t^2-t} =c_2 \sum_{{\bf v}} x^{\|{\bf v}\|^2 /8(2t+1)} (-1)^{(v_1+\cdots+v_t-t)/2}\prod_{i<j}(v_i^2-v_j^2),
\end{equation*}
\hspace*{0.9cm}with $\displaystyle c_2:= \frac{(-1)^{(t-1)/2}t!}{1! 2! \cdots (t-1)!},$ and where the sum ranges over $t$-tuples ${\bf v} :=\hspace*{0.9cm}(v_1,\ldots, v_t) \in \mathbb{Z}^t$ such that $v_i \equiv 2i-1 \mbox{~mod~} 4t+2$.

\end{Theorem}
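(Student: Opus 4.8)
The plan is to show that each of the three Macdonald formulas in Theorem~\ref{equi} can be recovered from Theorem~\ref{theoremeintro} by substituting a suitable value of $t$ and performing an appropriate change of variables on the vectors $\mathbf{v}$ appearing in each identity. The key observation is that all three formulas share the structural shape $\eta(x)^{2t^2\pm t} = c\sum_{\mathbf{v}} x^{\|\mathbf{v}\|^2/N}\,(\cdots)\prod_{i<j}(v_i^2-v_j^2)$, which is precisely the shape of the type $\widetilde{C}$ Macdonald identity~\eqref{equaC} that underlies~\eqref{eqtheoremeintro}. Since Theorem~\ref{theoremeintro} is established for \emph{all} complex numbers $t$, I can specialize $t$ freely; the real content is matching the congruence conditions, the exponent of $x$, and the leading constants.

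First I would treat case (i), type $\widetilde{C}_t$: this is immediate, since~\eqref{eqtheoremeintro} was derived from~\eqref{equaC} through the bijection $\varphi$ and Lemma~\ref{lemme-recap}, so running the argument of Section~\ref{section3.3} backwards recovers~\eqref{equaC} for every integer $t\geq 2$. For case (iii), type $\widetilde{BC}_t$, I would compare the exponent $2t^2-t$ with the exponent $2s^2+s$ in Theorem~\ref{theoremeintro}: setting $s=-t$ gives $2s^2+s = 2t^2-t$, so the substitution $t\mapsto -t$ in~\eqref{eqtheoremeintro} should reproduce the $\widetilde{BC}_t$ identity. The congruence $v_i\equiv 2i-1 \bmod 4t+2$ and the factor $(-1)^{(v_1+\cdots+v_t-t)/2}$ must then be matched against the vectors $v_i=(2s+2)n_i+i = (-2t+2)n_i+i$ coming from the definition $v_i=(2t+2)n_i+i$ in Section~\ref{section3.1}; the sign factor $\prod_i\sigma_i$ in~\eqref{eqcoeff2} should account for the $(-1)^{(\cdots)/2}$ prefactor, and the constant $c_2$ must be checked to agree with $c_1$ after the specialization (the extra $t!$ in $c_2$ reflecting the missing $\prod_i v_i$ factor, which degenerates when the linear factor is absorbed into the sign). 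For case (ii), type $\widetilde{B}_t$, I would similarly identify the value of $t$ making the exponents match ($2t^2+t$ is already the $\widetilde{C}$ exponent, so here the matching is at equal exponent but with a rescaled lattice: the modulus $4t-2$ versus $2t+2$ and the exponent denominator $8(2t-1)$ versus $4t+4$ indicate a dilation $v_i\mapsto 2v_i-1$ or similar), and verify that the additional constraint $v_1+\cdots+v_t\equiv t^2 \bmod 8t-4$ selects exactly the sublattice corresponding to our vectors.

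The main obstacle will be case (ii): unlike the $\widetilde{C}$ and $\widetilde{BC}$ identities, which differ from~\eqref{equaC} only through the sign of $t$ and the residue convention, the $\widetilde{B}_t$ formula has a genuinely different modulus ($4t-2$ rather than $2t+2$) and an extra global congruence on $v_1+\cdots+v_t$. Establishing that these data define the \emph{same} set of monomials as those produced by Theorem~\ref{theoremeintro} requires exhibiting an explicit affine-linear bijection between the two index sets and checking that it preserves both $\|\mathbf{v}\|^2$ (up to the correct scaling of the $x$-exponent) and the product $\prod_i v_i\prod_{i<j}(v_i^2-v_j^2)$ up to the constant ratio $c_1$. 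I would expect to verify this by writing each $v_i$ on the $\widetilde{B}$ side in the form $v_i = (4t-2)m_i + (2i-1)$ and showing, via the residue and sum conditions, that the resulting lattice is an index-preserving reparametrization of the $t+1$-core data of Section~\ref{section1}; the bookkeeping on the leading constants and on the parity-dependent signs $\delta_\lambda,\varepsilon_h$ is where the computation is most delicate, and I would organize it so that the polynomiality argument of Theorem~\ref{thmprincipal} handles any residual discrepancy in $t$ uniformly rather than case by case.
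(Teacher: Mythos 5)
Your global strategy --- specialize the parameter in Theorem~\ref{theoremeintro}, use a vanishing argument to restrict the sum to the appropriate cores, and then run the lemmas of Section~\ref{section2} in reverse --- is the paper's strategy, and case (i) is indeed immediate. But the whole content of cases (ii) and (iii) lies in choosing the \emph{right} specialization, and there your proposal breaks down. The exponent $2s^2+s$ is quadratic in $s$, so each target power of $\eta$ is reached by two values of $s$; one must take the root for which $2s+2$ equals, up to sign, half the modulus of the target lattice, because it is $2s+2$ that governs which hook lengths force the product in \eqref{eqtheoremeintro} to vanish and hence which cores survive. For type $\widetilde{BC}_t$ the correct substitution is the half-integer $s=t-1/2$: it gives $2s^2+s=2t^2-t$ \emph{and} $2s+2=2t+1$, so the surviving partitions are $(2t+1)$-cores, matching the modulus $4t+2=2(2t+1)$. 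Your choice $s=-t$ is the other root of the quadratic: it reproduces the exponent but gives $2s+2=2-2t$, so the vanishing selects an entirely different family of partitions, $i$ would have to range over a negative number of coordinates in $v_i=(2s+2)n_i+i$, and the matching with the $\widetilde{BC}_t$ lattice cannot go through.

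For type $\widetilde{B}_t$ the needed substitution is $s=-t-1/2$, which gives $2s^2+s=2t^2+t$ and $2s+2=-(2t-1)$; the hook factor becomes $1+(2t-1)/(h\,\varepsilon_h)$, which vanishes exactly when $\lambda$ has a hook of length $2t-1$ strictly above the diagonal, and the modulus $4t-2=2(2t-1)$ then appears naturally. Your proposed route for (ii) --- keeping the same integer $t$ and seeking a dilation such as $v_i\mapsto 2v_i-1$ between the $\widetilde{C}_t$ and $\widetilde{B}_t$ index sets --- has no reason to succeed: the two identities at equal $t$ involve genuinely different core structures, and no affine reparametrization of the lattices is available; the coincidence of exponents is explained only by the two distinct roots of $2s^2+s=2t^2+t$. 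Finally, the polynomiality argument cannot ``handle any residual discrepancy'': it serves to pass from integer to complex $t$, whereas here one is evaluating at specific negative half-integers, so a wrong specialization is simply a wrong identity rather than something that can be repaired uniformly.
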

\begin{proof} As the methods are similar to the proof of Theorem~\ref{theoremeintro}, we only highlight the ideas. By substituting $u:=-t-1/2$  in \eqref{eqtheoremeintro}, and considering the positive integral values of $u$, we first prove that the product on the right-hand side vanishes for all partitions $\lambda$, except for those that do not contain a hook length equal to $2u-1$ for boxes strictly above the diagonal. By using some lemmas analogous to Lemmas~\ref{lemme han}--\ref{lemme-recap} (in the reverse sense) and a bijection analogous to $\varphi$, we manage to derive the Macdonald formula in type $\widetilde{B}_u$ for any integer $u \geq 3$. 
The same reasoning applies for type $\widetilde{BC}_\ell$ by doing the substitution $\ell:=t-1/2$ for integers $\ell \geq 1$. The partitions $\lambda$ that occur here are $2\ell+1$-cores.
\end{proof}

\subsection{Refinement of a result due to Kostant}\label{section3.5}

Let us write 
\begin{equation*}
\prod_{n \geq 1} (1-x^ n)^ s= \sum_{k \geq 0} f_k(s) x^ k.
\end{equation*}
Kostant proved through considerations on Lie algebras the following result \cite[Theorem~4.28]{KOS}.
\begin{Theorem}[\cite{KOS}]\label{thmkos}
Let $k$ and $m$ be two positive integers such that $m \geq \max (k,4)$. Then $f_k(m ^2-1) \neq 0$.
\end{Theorem}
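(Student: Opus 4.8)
The plan is to use the Nekrasov--Okounkov expansion in type $\widetilde{C}$, namely Theorem~\ref{theoremeintro}, as the main engine, exploiting the fact that the parameter $t$ there ranges over \emph{all} complex numbers. The key observation is that Kostant's exponent $m^2-1$ should be matched against the exponent $2t^2+t$ appearing in \eqref{eqtheoremeintro}: solving $2t^2+t=m^2-1$ gives a specific (generally non-integral, even non-rational) value of $t$, and for that value the hook-length product on the right-hand side becomes a concrete arithmetic quantity over doubled distinct partitions. Thus the strategy is to read off $f_k(m^2-1)$ as the coefficient of $x^{k}$ on the left-hand side, then re-express it via \eqref{eqtheoremeintro} as a finite sum
\begin{equation*}
f_k(m^2-1)=\sum_{\stackrel{\lambda\in DD}{|\lambda|=2k}}\delta_\lambda\prod_{h\in\mathcal{H}(\lambda)}\left(1-\frac{2t+2}{h\,\varepsilon_h}\right),
\end{equation*}
where $t$ is the chosen root of $2t^2+t=m^2-1$. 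The point of passing to type $\widetilde{C}$ is that the sum is now over the much sparser family $DD$ with $|\lambda|=2k$, rather than over all partitions of some large integer, which should make a non-vanishing argument tractable.

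First I would pin down the right value of $t$ and record that $2t+2$ is then an explicit algebraic number built from $m$; the factor $2t+2$ is what controls whether individual terms can vanish. Next I would isolate a \emph{dominant} term in the sum: a natural candidate is the ``staircase'' doubled distinct partition of weight $2k$ whose hook lengths are as large as possible, so that each factor $1-\tfrac{2t+2}{h\varepsilon_h}$ is close to $1$ and in particular nonzero, making its contribution of largest absolute value. The hypothesis $m\geq\max(k,4)$ is presumably exactly what is needed to guarantee that $2t+2$ is large compared with all the hook lengths $h\leq$ (something like $2k$) that can appear, so that no factor $1-\tfrac{2t+2}{h\varepsilon_h}$ accidentally vanishes and no delicate cancellation among the (few) terms can bring the whole sum to zero. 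I would make this quantitative by bounding the dominant term from below and the sum of all remaining terms from above, using that $2t+2$ grows like $m$ while the number of doubled distinct partitions of weight $2k$ and their hook lengths are controlled purely by $k$.

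The main obstacle I anticipate is the cancellation problem: even though each summand is now explicit, the signs $\delta_\lambda$ and the signs coming from $\varepsilon_h$ could in principle conspire to produce a zero sum, so a clean term-by-term positivity is not available and a genuine estimate is required. To handle this I would separate the sum into the single dominant partition plus a remainder, and argue that under $m\geq\max(k,4)$ the dominant contribution strictly exceeds the combined magnitude of the remainder; the condition $m\geq 4$ is likely there to kill the small sporadic exceptions where this majorization fails, exactly as in the analogous classical type $A$ estimates underlying \eqref{nekrasov}. A secondary, more technical obstacle is that $t$ is irrational, so one cannot reason modulo integers about when $h\varepsilon_h$ equals $2t+2$; instead one simply notes that $2t+2$ is irrational while every $h\varepsilon_h$ is an integer, so \emph{no} factor can vanish, which in fact simplifies matters and is the cleanest route to the conclusion $f_k(m^2-1)\neq 0$.
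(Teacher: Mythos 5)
First, a point of orientation: the paper does not prove Theorem~\ref{thmkos} at all. It is quoted from Kostant's article \cite{KOS}, where the proof is Lie-theoretic; what the paper proves is the \emph{different} refinement Theorem~\ref{kostantpet}, whose argument is exactly the one you are reaching for, but with the exponent parametrized as $2s^2+s$ and the hypothesis $s>k-1$ imposed directly, precisely so that $2s+2>2k\geq h$ for every hook length of a $\lambda\in DD$ with $|\lambda|=2k$; then every summand $W(\lambda)$ has sign $(-1)^k$ and no cancellation can occur. (If one wants Kostant's statement as a corollary of a Nekrasov--Okounkov-type expansion, the route that actually works is Han's Theorem~\ref{kostanthan}, quoted in the paper: $m\geq k$ gives $m^2-1\geq k^2-1$, hence $(-1)^kf_k(m^2-1)>0$.)

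Your proposal has two load-bearing claims, and both fail. (a) \emph{Irrationality of $2t+2$.} Solving $2t^2+t=m^2-1$ gives $2t+2=\bigl(3+\sqrt{8m^2-7}\bigr)/2$, and $8m^2-7$ is a perfect square for infinitely many $m$ (the Pell equation $n^2-8m^2=-7$ has solutions $(n,m)=(1,1),(5,2),(11,4),(31,11),\dots$). Already for $m=4$, which is inside your hypothesis, one gets $t=5/2$ and $2t+2=7$, an integer, so factors $1-\frac{2t+2}{h\varepsilon_h}$ \emph{can} vanish; the "cleanest route" collapses. (b) \emph{Domination of the hook lengths.} One has $2t+2\sim\sqrt{2}\,m$, whereas the principal hook lengths of a doubled distinct partition of weight $2k$ go up to $2k$; the inequality $2t+2>2k$ amounts to $m^2>2k^2-3k+2$, which fails for $m=k\geq 3$ (e.g.\ $k=m=5$ gives $2t+2\approx 8.45<10$). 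So under $m\geq\max(k,4)$ the factors genuinely change sign from one box to another, the term-by-term positivity used in Theorem~\ref{kostantpet} is unavailable, and the proposed "dominant term beats the remainder" estimate is never carried out — nor is there any reason to expect the staircase partition to dominate once some factors $1-\frac{2t+2}{h}$ are small or negative. As it stands, the argument establishes nothing beyond the identity $f_k(m^2-1)=\sum_{\lambda\in DD,\,|\lambda|=2k}\delta_\lambda\prod_h\bigl(1-\frac{2t+2}{h\varepsilon_h}\bigr)$, which is a correct but unexploited starting point.
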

Notice that the original statement $m>1$ in Kostant's Theorem should be replaced by $m\geq 4$, as noticed by Han. This theorem was refined by Han in \cite[Theorem~1.6]{HAN} in the following way.
\begin{Theorem}[\cite{HAN}]\label{kostanthan}
Let $k$ be a positive integer and $s$ be a real number such that $s \geq k^2-1$. Then $(-1)^kf_k(s)>0$
\end{Theorem}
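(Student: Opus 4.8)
The plan is to deduce Theorem~\ref{kostanthan} directly from the Nekrasov--Okounkov formula \eqref{nekrasov}, viewed as a polynomial identity in the variable $z$. First I would set $z=1-s$ in \eqref{nekrasov}, so that $\prod_{k\geq 1}(1-x^k)^s = \sum_{\lambda\in\mathcal P} x^{|\lambda|}\prod_{h\in\mathcal H(\lambda)}\bigl(1-\tfrac{1-s}{h^2}\bigr)$, and then extract the coefficient of $x^k$ on both sides. This gives the clean expression
\begin{equation*}
f_k(s)=\sum_{\stackrel{\lambda\in\mathcal P}{|\lambda|=k}}\ \prod_{h\in\mathcal H(\lambda)}\left(1-\frac{1-s}{h^2}\right)
=\sum_{\stackrel{\lambda\in\mathcal P}{|\lambda|=k}}\ \prod_{h\in\mathcal H(\lambda)}\frac{h^2-1+s}{h^2},
\end{equation*}
a finite sum since the outer sum runs only over the finitely many partitions of $k$.

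The key observation is a sign analysis of each individual factor under the hypothesis $s\geq k^2-1$. Fix a partition $\lambda$ of $k$ and a hook length $h\in\mathcal H(\lambda)$; then $1\leq h\leq k$, so $h^2-1+s\geq h^2-1+(k^2-1)\geq h^2+k^2-2>0$ whenever $k\geq 1$ and $h\geq 1$ (the only borderline case $h=k=1$ gives $s\geq 0$ and the factor $s\geq 0$, handled below). Thus every factor $\tfrac{h^2-1+s}{h^2}$ is nonnegative, and it is strictly positive as soon as $s>1-h^2$, which holds for all $h\geq 2$ and, when $s>0$, also for $h=1$. Consequently each product over $\mathcal H(\lambda)$ is nonnegative; I would next argue it does not vanish identically over all $\lambda$, so that the sum $f_k(s)$ is strictly positive in absolute value. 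The point where the sign $(-1)^k$ enters is that I am really analyzing the shifted quantity: writing $s=k^2-1+r$ with $r\geq 0$ does not immediately produce the alternating sign, so the factorization above must instead be examined for the correct sign of each factor relative to $(-1)^{\#\mathcal H(\lambda)}=(-1)^k$.

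The main obstacle, and the heart of the argument, is precisely bookkeeping this overall sign $(-1)^k$. The resolution I anticipate is to track hook lengths equal to $1$: a partition $\lambda$ of $k$ has some number $d$ of hook lengths equal to $1$ (the corners), and for these the factor is $\tfrac{s}{1}=s\geq k^2-1\geq 0$, whereas for the remaining $k-d$ hooks (all with $h\geq 2$) the factors $\tfrac{h^2-1+s}{h^2}$ are strictly positive. Since $s\geq 0$ the corner factors contribute no sign, and all factors are genuinely nonnegative; the claimed sign $(-1)^k$ in Han's statement must therefore come from Han's convention or from pairing \eqref{nekrasov} with the normalization of $f_k$. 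I would reconcile this by verifying the sign on the extreme partition $\lambda=(k)$ (a single row), whose hook lengths are exactly $k,k-1,\ldots,1$, giving $\prod_{h=1}^{k}\tfrac{h^2-1+s}{h^2}$; checking its sign fixes the global $(-1)^k$ and, combined with nonnegativity of every term, yields $(-1)^k f_k(s)>0$.

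The final step is to ensure strict positivity rather than mere nonnegativity: I would exhibit at least one partition of $k$ for which the product is strictly nonzero, for instance the single row $\lambda=(k)$ above, all of whose factors are strictly positive once $s\geq k^2-1>1-h^2$ for every $h\in\{1,\ldots,k\}$ (using $s\geq 0$ for the $h=1$ factor). Since all other summands share the same sign, no cancellation can occur, and strict inequality follows. I expect the only delicate point to be the uniform sign verification across the factors with small $h$, which the bound $s\geq k^2-1$ is tailored precisely to guarantee.
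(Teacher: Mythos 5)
Your extraction of $f_k(s)$ from \eqref{nekrasov} contains a sign error, and that error is exactly where the factor $(-1)^k$ lives. Since the exponent in \eqref{nekrasov} is $z-1$, obtaining $\prod_{n\geq 1}(1-x^n)^s$ requires the substitution $z=s+1$, not $z=1-s$; the hook factor is then $1-\frac{s+1}{h^2}=\frac{h^2-s-1}{h^2}$, the negative of the $\frac{h^2-1+s}{h^2}$ you wrote. A quick sanity check: $f_1(s)=-s$ (the coefficient of $x$ in $\prod_{n}(1-x^n)^s$), whereas your formula yields $+s$. With the correct factor the argument is immediate and the sign is forced rather than mysterious: every hook length of a partition of $k$ satisfies $h\leq k$, so the hypothesis $s\geq k^2-1$ gives $s+1\geq k^2\geq h^2$ and every factor is $\leq 0$; since $\#\mathcal{H}(\lambda)=|\lambda|=k$, each summand is $(-1)^k$ times a nonnegative number, whence $(-1)^kf_k(s)\geq 0$. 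Your paragraph conceding that the sign ``must come from Han's convention'' is the symptom of the substitution error: as written, your argument would show that \emph{all} summands are nonnegative and hence that $f_k(s)>0$ for every $k$, which is already false for $k=1$. This sign bookkeeping is the whole content of the theorem, and it is the same mechanism the paper uses in its proof of Theorem~\ref{kostantpet}, where each weight $W(\lambda)$ is rewritten as $(-1)^k$ times a product of factors that are individually positive under the hypothesis.

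The strictness step also needs more care. With the correct factors, a summand vanishes precisely when some hook length satisfies $h^2=s+1$; at the boundary $s=k^2-1$ this occurs for $h=k$, and your proposed witness $\lambda=(k)$ has hook lengths exactly $1,2,\ldots,k$, so it is the one partition guaranteed to contribute $0$ there. To get strict positivity one must either assume $s>k^2-1$ (so that every factor of every summand is strictly negative) or exhibit a partition of $k$ none of whose hook lengths equals $\sqrt{s+1}$ --- for the boundary case this means a partition with no hook of length $k$, i.e.\ a non-hook shape, which exists only for $k\geq 4$. This is a secondary defect compared with the sign error, but it shows the single-row partition is precisely the wrong choice of witness.
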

In the same vein as did Han, we can refine differently Theorem~\ref{thmkos}.
\begin{Theorem}\label{kostantpet}
Let $k$ be a positive integer and $s$ be a real number such that $s>k-1$. Then $(-1)^kf_k(2s^2+s)>0$.
\end{Theorem}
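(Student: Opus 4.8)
The plan is to extract the coefficient of $x^k$ from the type $\widetilde{C}$ expansion \eqref{eqtheoremeintro} and analyze the sign of each surviving term, mimicking Han's strategy for Theorem~\ref{kostanthan} but now exploiting the doubled distinct combinatorics. Writing $s$ in place of $t$ and setting the exponent to $2s^2+s$, Theorem~\ref{theoremeintro} gives
\begin{equation*}
\prod_{n\geq 1}(1-x^n)^{2s^2+s} = \sum_{\lambda\in DD}\delta_\lambda\, x^{|\lambda|/2}\prod_{h\in\mathcal{H}(\lambda)}\left(1-\frac{2s+2}{h\,\varepsilon_h}\right).
\end{equation*}
Since $|\lambda|/2$ ranges over nonnegative integers as $\lambda$ runs over $DD$, the coefficient $f_k(2s^2+s)$ is the sum of $\delta_\lambda\prod_h(1-\tfrac{2s+2}{h\varepsilon_h})$ over all doubled distinct $\lambda$ with $|\lambda|=2k$. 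The goal is to show that, under the hypothesis $s>k-1$, every nonzero summand carries the sign $(-1)^k$, so no cancellation can occur and the total is nonzero with the claimed sign.

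First I would fix a doubled distinct partition $\lambda$ with $|\lambda|=2k$ and count the sign contributions of its hook factors. Because $|\lambda|=2k$, the number of boxes is $2k$, so there are $2k$ hook lengths; each factor $1-\tfrac{2s+2}{h\varepsilon_h}$ is negative precisely when $\tfrac{2s+2}{h\varepsilon_h}>1$, i.e. when $\varepsilon_h=1$ and $h<2s+2$, or when $\varepsilon_h=-1$ (the above-diagonal boxes), in which case the factor is $1+\tfrac{2s+2}{h}>1>0$. So the above-diagonal factors are all positive, and the sign is governed entirely by the on-or-below-diagonal boxes with small hook length. The key input from the hypothesis $s>k-1$ is that $2s+2>2k\geq h$ for every hook length $h$ of $\lambda$ (as all hooks are at most $|\lambda|=2k$), which forces every below-diagonal or diagonal factor with $\varepsilon_h=1$ to be strictly negative and nonvanishing — vanishing would require $h=2s+2>2k$, impossible. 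Thus no summand is zero, and the sign of the hook product is $(-1)^{N(\lambda)}$ where $N(\lambda)$ is the number of boxes with $\varepsilon_h=1$, namely the boxes on or below the diagonal.

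Next I would combine this with the Durfee factor $\delta_\lambda=(-1)^{D(\lambda)}$ and show that $\delta_\lambda(-1)^{N(\lambda)}=(-1)^k$ for every doubled distinct $\lambda$ of weight $2k$, independently of $\lambda$. By the doubled distinct symmetry recorded in Section~\ref{propdd} (boxes $(i,j)$ and $(j,i)$ pair up, while the diagonal boxes are unpaired), the boxes of $\lambda$ split into the $D(\lambda)$ diagonal boxes and pairs symmetric across the diagonal; exactly half of each off-diagonal pair lies strictly above, so $N(\lambda)=\tfrac{2k-D(\lambda)}{2}+D(\lambda)=k+\tfrac{D(\lambda)}{2}$ once one checks $D(\lambda)$ is even — but more robustly, $2k-D(\lambda)$ is the number of off-diagonal boxes, split evenly, giving $N(\lambda)=k-\tfrac{D(\lambda)}{2}+D(\lambda)$. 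The clean way is to observe $N(\lambda)\equiv k+D(\lambda)\pmod 2$ directly from the pairing, whence $\delta_\lambda(-1)^{N(\lambda)}=(-1)^{D(\lambda)}(-1)^{k+D(\lambda)}=(-1)^k$. Therefore every surviving summand equals $(-1)^k$ times a strictly positive real, so $(-1)^k f_k(2s^2+s)>0$.

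The main obstacle I anticipate is the parity bookkeeping in the last step: correctly accounting for how the $\varepsilon_h=1$ boxes (diagonal plus below-diagonal) interact with $D(\lambda)$ to yield a constant sign $(-1)^k$ across all $\lambda$, and verifying carefully that the diagonal hook lengths of a doubled distinct partition (which are even) cannot equal $2s+2$ under $s>k-1$. I would handle this by invoking the explicit hook-length symmetries of $DD$ partitions from Section~\ref{propdd} to pin down $N(\lambda)\bmod 2$ and the nonvanishing. The inequality $s>k-1$ (rather than Han's $s\geq k^2-1$) suffices here precisely because hook lengths of a weight-$2k$ partition never exceed $2k$, so the threshold $2s+2$ only needs to dominate $2k$, a much weaker requirement that explains why this refinement improves on Theorem~\ref{thmkos}.
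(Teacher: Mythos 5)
Your proposal follows essentially the same route as the paper: extract the coefficient of $x^k$ from Theorem~\ref{theoremeintro}, observe that the factors from boxes strictly above the diagonal are of the form $1+\tfrac{2s+2}{h}>0$, that the hypothesis $s>k-1$ gives $2s+2>2k\geq h$ so every factor with $\varepsilon_h=1$ is strictly negative and nonvanishing, and finish with the parity identity $(-1)^{\#\mathcal{H}(\lambda)_{\leq}}=(-1)^k\delta_\lambda$. One step of your write-up is genuinely wrong as stated, though the conclusion you need is correct: a doubled distinct partition is \emph{not} self-conjugate, so its boxes do not pair up as $(i,j)\leftrightarrow(j,i)$ and the off-diagonal boxes are \emph{not} split evenly between the two sides of the diagonal (your formula $N(\lambda)=k+\tfrac{D(\lambda)}{2}$ would also force $D(\lambda)$ to be even, which fails already for $\lambda=(2)$). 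The correct count uses $\lambda_i^*=\lambda_i-1$ for $i\leq D(\lambda)$: the row side carries $\sum_{i\leq D}(\lambda_i-i)=k$ boxes, the column side (the strictly-above boxes in the paper's convention) carries $k-D(\lambda)$, so $N(\lambda)=k+D(\lambda)$ and $\delta_\lambda(-1)^{N(\lambda)}=(-1)^{k+2D(\lambda)}=(-1)^k$. With that repair your argument coincides with the paper's proof, which carries out exactly this sign computation by splitting $\mathcal{H}(\lambda)$ into $\mathcal{H}(\lambda)_{\leq}$ and $\mathcal{H}(\lambda)_{>}$.
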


\begin{proof}
By Theorem~\ref{theoremeintro}, we can write
\begin{equation*}
f_k(2s^2+s)= \sum_{\lambda \in DD, |\lambda|=2k} W(\lambda),
\end{equation*}
where $\displaystyle W(\lambda):=\delta_\lambda \prod_{h \in \mathcal{H}(\lambda)}\left(1-\frac{2s+2}{h\,\varepsilon_h}\right)$. 

By denoting $\mathcal{H}(\lambda)_{\leq}$ (respectively $\mathcal{H}(\lambda)_{>}$) the multi-set of hook lengths of boxes below (respectively strictly above) the principal diagonal of $\lambda$, we have
\begin{eqnarray*}
\displaystyle W(\lambda)&=&\displaystyle\delta_\lambda \prod_{h \in \mathcal{H}(\lambda)}\left(\frac{h-\varepsilon_h(2s+2)}{h}\right)
\\&=&\displaystyle\delta_\lambda \prod_{h \in \mathcal{H}(\lambda)_\leq}\left(\frac{h-(2s+2)}{h}\right)\prod_{h \in \mathcal{H}(\lambda)_>}\left(\frac{h+(2s+2)}{h}\right)
\\&=&\displaystyle \delta_\lambda (-1) ^{\# \mathcal{H}(\lambda)_\leq}\prod_{h \in \mathcal{H}(\lambda)_\leq}\left(\frac{2s+2-h}{h}\right)\prod_{h \in \mathcal{H}(\lambda)_>}\left(\frac{h+2s+2}{h}\right)
\\&=&\displaystyle (-1)^ k\prod_{h \in \mathcal{H}(\lambda)_\leq}\left(\frac{2s+2-h}{h}\right)\prod_{h \in \mathcal{H}(\lambda)_>}\left(\frac{h+2s+2}{h}\right),
\end{eqnarray*}
where the last equality is a consequence of $(-1)^{\#\mathcal{H}(\lambda)_\leq}=(-1)^k\delta_\lambda $. As $|\lambda|=2k$, the condition $2s+2>2k$ implies that both products are positive. Hence $(-1)^kW(\lambda)>0$ and the result follows.
\end{proof}

\section{A generalization through Littlewood decomposition}\label{section4}
In this section we prove a generalization of Theorem~\ref{theoremeintro} with two extra parameters, by using the canonical correspondence between partitions and bi-infinite binary words beginning with infinitely many $0$'s and ending with infinitely many $1$'s, together with new properties of the so-called Littlewood decomposition, which we first recall in the next subsection.

\subsection{Littlewood decomposition}\label{section4.1}

We follow Han \cite{HAN} here. The Littlewood decomposition is a classical bijection which maps each partition to its $t$-core and $t$-quotient (see for example \cite[p. 468]{EC}).  Let $\mathcal{W}$ be the set of bi-infinite binary sequences beginning with infinitely many $0$'s and ending with infinitely many $1$'s. Each element $w$ of $\mathcal{W}$ can be represented by a sequence $(b_i)_i= \cdots b_{-2}b_{-1}b_0b_1b_2 \cdots$, but the representation is not unique. Indeed, for any fixed integer $k$ the sequence $(b_{i+k})_i$ represents $w$. The \emph{canonical representation} of $w$ is the unique sequence $(c_i)_i=\cdots c_{-2}c_{-1}c_0c_1c_2 \cdots$ such that:
\begin{equation*}
\#\{i \leq -1, c_i=1\}= \#\{i \geq 0, c_i=0\}.
\end{equation*}

We put a dot symbol ``." between the letter $c_{-1}$ and $c_0$ in the bi-infinite sequence $(c_i)_i$ when it is the canonical representation.
There is a natural one-to-one correspondence between the set of partitions $\mathcal{P}$ and $\mathcal{W}$. Let $\lambda$ be a partition. We encode each horizontal edge of the Ferrers diagram of $\lambda$ by $1$ and each vertical edge by $0$. Reading these $(0,1)$-encodings from top to bottom and left to right yields a binary word $u$. By adding infinitely many $0$'s to the left and infinitely many $1$'s to the right of $u$, we get an element $w=\cdots000u111\cdots \in \mathcal{W}$. The map 
\begin{equation}\label{defpsi}\psi : \lambda \mapsto w \end{equation} 
is a one-to-one correspondence between $\mathcal{P}$ and $\mathcal{W}$. The canonical representation of $\psi(\lambda)$ will be denoted by $C_\lambda$. 
\begin{figure}[h!]
\includegraphics[scale=1.2]{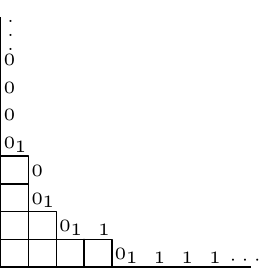}\hspace*{50pt}\includegraphics[scale=1.2]{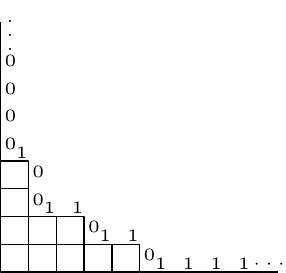}
\caption{\label{fig9} A self-conjugate and a doubled distinct partition, and their respective $(0,1)$-encodings.}
\end{figure}

For example (see Figure~\ref{fig9}), for $\lambda=(4,2,1,1)$, we have $u=10010110$, so that \\$w=\cdots00010010110111\cdots$ and $C_\lambda=\cdots0001001.0110111\cdots$.

Notice that the symbol ``." in the canonical representation of $\lambda$ corresponds in the Ferrers diagram to the corner of its Durfee square. The size of the Durfee square is the number of $1$'s before the symbol ``." in the canonical representation. The following definition is useful to interpret self-conjugate and doubled distinct partitions in terms of words.

\begin{definition}\label{definitionf}
Let $v$ be a finite binary word. We define $f(v)$ as the reverse word of $v$ in which we exchange the letters $0$ and $1$.
\end{definition}

For example, if $v=1001010$, then $f(v)=1010110$.

 Notice that $\lambda $ is a doubled distinct partition if and only if its canonical representation is of the form $\cdots00v.1f(v)11\cdots$, where $v$ is a finite word. Notice finally that $\lambda$ is self-conjugate if and only if its canonical representation is of the form $\cdots00v.f(v)11\cdots$.

Let $t$ be a positive integer. The aforementioned \emph{Littlewood decomposition} is a classical bijection $\Omega$, which maps a partition $\lambda$ to ($\tilde{\lambda}, \lambda^0, \lambda^1,\ldots, \lambda^{t-1}$) such that:
\begin{itemize}
\item[(i)] $\tilde{\lambda}$ is the $t$-core of $\lambda$ and $\lambda^0, \lambda^1,\ldots, \lambda^{t-1}$ are partitions;
\item[(ii)]$|\lambda|= |\tilde{\lambda}|+t(|\lambda^0|+ |\lambda^1|+\cdots+ |\lambda^{t-1}|)$;
\item[(iii)]$\{h/t, h \in \mathcal{H}_t(\lambda)\}=\mathcal{H}(\lambda^0)\cup\mathcal{H}(\lambda^1)\cup \cdots \cup \mathcal{H}(\lambda^{t-1})$, where this equality should be understood in terms of multi-sets. Moreover, there is a canonical one-to-one correspondence between the boxes of $\lambda$ with hook lengths which are multiples of $t$ and the boxes of $\lambda^0, \lambda^1,\ldots,\lambda^{t-1}$.
\end{itemize}

\begin{figure}[h!]
\includegraphics[scale=1.2]{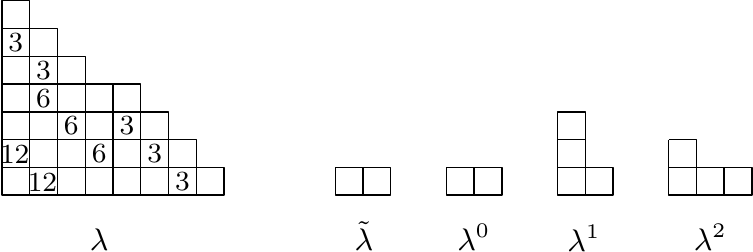}
\caption{\label{fig7}Littlewood decomposition of $\lambda=(8,7,6,5,3,2,1)$ and $t=3$. In $\lambda$, we only write the hook lengths which are integral multiples of $3$.}
\end{figure}

The vector of partitions $(\lambda^0, \lambda^1, \ldots, \lambda^{t-1})$ is usually called the \emph{$t$-quotient} of $\lambda$. Let us describe the bijection $\Omega$. We split the canonical representation $C_\lambda=(c_i)_i$ into $t$ sections, \emph{i.e.} we form the subsequence $v^k=(c_{it+k})_i$ for each $k \in \{0, \ldots, t-1\}$. The partition $\lambda^k$ is defined as  $\psi^{-1}(v^k)$. Notice that the subsequence $v^k$ is not necessarily the canonical representation of $\lambda^k$.
The partition $\tilde{\lambda}$ can be defined equivalently as the $t$-core of $\lambda$ or as follows in terms of words. For each subsequence $v^k$, we continually replace the subword $10$ by $01$. The final resulting sequence is of the form $\cdots 000111\cdots$ and is denoted by $w^k$. The $t$-core of the partition is the partition $\tilde{\lambda}$ such that the $t$ sections of the canonical representation $C_{\tilde{\lambda}}$ are exactly $w^0,w^1, \ldots, w^{t-1}$. Properties (ii) and (iii) can be derived from the following fact: each box of $\lambda$ is in one-to-one correspondence with the ordered pair of integers $(i,j)$ such that $i<j$, $c_i=1$ and $c_j=0$. Moreover the hook length of that box is equal to $j-i$.

\subsection{Littlewood decomposition of doubled distinct partitions}\label{section4.2}
Let $t$ be a positive integer.
It is already known (see \cite{GKS}) that the restriction of $\Omega$ to the set of doubled distinct partitions is a bijection with the set of vectors ($\tilde{\lambda}, \lambda^0, \lambda^1,\ldots,\lambda^{t-1}) \in DD_{(t)} \times DD \times \mathcal{P}^{t-1}$ such that $\lambda^{t-i} = \lambda^{i*}$ for $1 \leq i \leq t-1$ (where $\lambda^{i*}$ is the conjugate of $\lambda^i$).
This property can be checked on Figure~\ref{fig7}, for $\lambda=(8,7,6,5,3,2,1)\in DD$ and $t=3$.

 To prove our generalisation of Theorem~\ref{theoremeintro}, we will need new properties of the Littlewood decomposition for doubled distinct partitions.

\begin{lemma}\label{littlewood}
Let $t= 2t'+1$ be an odd positive integer, let $\lambda $ be a doubled distinct partition, and let $(\tilde{\lambda}, \lambda^0, \lambda^1,\ldots \lambda^{t-1})$ be its image under $\Omega$. The following properties hold:
\begin{itemize}
\item[(i)] $\delta_ \lambda= \delta_{\tilde{\lambda}} \delta_{\lambda^0}$.
\item[(ii)] Let $v$ be a box of $\lambda^0$ and let $V$ be its canonically associated box in $\lambda$. The box $v$ is strictly above the principal diagonal of $\lambda^0$ if and only if $V$ is strictly above the principal diagonal of $\lambda$.

\item[(iii)] Let $v=(j,k)$ be a box of $\lambda^i$, with $1\leq i\leq t'$. Denote by $v^*=(k,j)$ the box of $\lambda^{2t'+1-i}=\lambda^{i*}$ and by $V$ and $V^*$ the boxes of $\lambda$ canonically associated to $v$ and $v^*$. If $V$ is strictly above (respectively below) the principal diagonal in $\lambda^i$, then $V^*$ is strictly below (respectively above) the principal diagonal in $\lambda$.

\end{itemize}
\end{lemma}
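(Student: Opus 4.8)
The plan is to translate everything into the language of canonical binary words, since the Littlewood decomposition and the self-conjugacy/doubled-distinct conditions all have clean word-theoretic descriptions. Recall that a doubled distinct $\lambda$ has canonical representation of the form $\cdots 00 v.1f(v)11\cdots$, and that splitting $C_\lambda=(c_i)_i$ into the $t$ sections $v^k=(c_{it+k})_i$ produces the quotient partitions $\lambda^k=\psi^{-1}(v^k)$. The symmetry $c_i \mapsto 1-c_{-1-i}$ (which exchanges $0\leftrightarrow 1$ and reverses around the dot) is exactly the operation $f$ witnessing the doubled distinct condition; the key observation to set up first is how this global symmetry of $C_\lambda$ acts on the sections. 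Since $t=2t'+1$ is odd, the dot sits symmetrically so that the involution fixes the central section $v^0$ (mapping it to a word encoding a doubled distinct partition, giving $\lambda^0\in DD$) and swaps section $v^i$ with section $v^{t-i}=v^{2t'+1-i}$, which is precisely the known statement $\lambda^{t-i}=\lambda^{i*}$. I would make this symmetry explicit as the backbone of the whole lemma.

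For part (i), I would use that $\delta_\mu=(-1)^{D(\mu)}$ and that $D(\mu)$ equals the number of $1$'s before the dot in $C_\mu$. The Durfee length of $\lambda$ counts the ordered pairs contributing to the diagonal; under the splitting into sections, a $1$ before the dot in $C_\lambda$ lands in some section before that section's dot. The point is to show that, modulo $2$, the total count of pre-dot $1$'s across all sections matches $D(\tilde\lambda)+D(\lambda^0)$: the paired sections $v^i,v^{t-i}$ contribute equal numbers of pre-dot $1$'s (by the involution), so their combined contribution is even and drops out mod $2$, leaving only the central section $\lambda^0$ and the $t$-core part $\tilde\lambda$. I would make this precise by tracking how the dot position of $C_\lambda$ distributes among the sections and comparing with the dot data recorded by $\tilde\lambda$ and $\lambda^0$.

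For parts (ii) and (iii), I would use the box correspondence: each box of $\lambda$ is the pair $(i,j)$ with $i<j$, $c_i=1$, $c_j=0$, with hook length $j-i$; being strictly above the diagonal corresponds to both $i$ and $j$ lying on the same side of the dot (concretely, $i,j\geq 0$, i.e.\ both after the dot), and below means both before. Under the section splitting, the box $v=(a,b)$ of $\lambda^k$ corresponds to indices $at+k$ and $bt+k$ of $C_\lambda$, so the above/below status of $v$ in $\lambda^k$ is governed by the signs of $a,b$ relative to that section's dot, while the status of $V$ in $\lambda$ is governed by the signs of $at+k,bt+k$ relative to the global dot. For $k=0$ (part (ii)) these two dot conventions coincide after the symmetric placement, giving the ``if and only if.'' For part (iii), applying the involution $c_i\mapsto 1-c_{-1-i}$ sends the box $V$ of $\lambda^i$ to the box $V^*$ of $\lambda^{2t'+1-i}$ and flips the side of the dot, which is exactly the claimed reversal of above/below.

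The main obstacle will be part (i): the bookkeeping of the dot position across the $t$ sections is delicate, because the central section $v^0$ is not simply symmetric around its own index-$0$ entry unless one checks carefully where the global dot of $C_\lambda$ falls within each section. I expect the cleanest route is to fix the canonical representation of $\lambda$, write down explicitly the residue of the dot position modulo $t$, verify that the involution $c_i\mapsto 1-c_{-1-i}$ restricted to the central section is again of the form $\cdots 00 w.1f(w)11\cdots$ (forcing $\lambda^0\in DD$), and then carry out the parity count of pre-dot $1$'s with the paired sections cancelling. Parts (ii) and (iii) are then essentially formal consequences of the box-pair description once the dot-placement normalization is fixed, so I would dispatch them quickly after (i) is secured.
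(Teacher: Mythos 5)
Your overall plan --- translate everything into the word model and exploit the symmetry of the canonical representation --- is the right idea, and is essentially what the paper does for parts (ii) and (iii) (for (i) the paper instead inducts on the Durfee length by removing the largest principal hook). But two of your key identifications are wrong as stated. First, the symmetry of a doubled distinct word is not $c_i\mapsto 1-c_{-1-i}$: that is the \emph{self-conjugate} symmetry, matching the canonical form $\cdots 00v.f(v)11\cdots$. For $\lambda\in DD$ the canonical form is $\cdots 00v.1f(v)11\cdots$, so the involution is $c_i\mapsto 1-c_{-i}$ for $i\neq 0$, with $c_0=1$ fixed. This matters: your map $i\mapsto -1-i$ sends the residue class $i \bmod t$ to $t-1-i$, so it would pair section $v^i$ with $v^{t-1-i}$ and fix $v^{t'}$, contradicting the facts $\lambda^0\in DD$ and $\lambda^{t-i}=\lambda^{i*}$ that you yourself invoke; the correct involution $i\mapsto -i$ gives the pairing $i\leftrightarrow t-i$ with $v^0$ fixed. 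Second, ``strictly above the principal diagonal'' is \emph{not} equivalent to ``both word positions on the same side of the dot.'' Pairs with both positions $\geq 0$ encode the boxes to the right of the Durfee square, pairs with both positions $\leq -1$ encode the boxes below it, and the \emph{mixed} pairs encode the Durfee square itself, which contains boxes on both sides of the diagonal (for $\lambda=(4,2,1,1)$ the box $(1,2)$ lies strictly above the diagonal yet corresponds to a mixed pair). The correct criterion compares the number of $1$'s preceding the box's $1$-position with the number of $0$'s following its $0$-position --- exactly the counts the paper tracks via the decompositions $w=w_11w_2$ and $f(w)=w_30w_4$ --- so your arguments for (ii) and (iii), which only look at the side of the dot, do not go through as written.

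There is also a gap in (i): the claim that the paired sections $v^i,v^{t-i}$ contribute equal numbers of pre-dot $1$'s is false unless $v^i$ happens to sit in canonical position. The involution sends pre-dot $1$'s of $v^i$ to post-dot $0$'s of $v^{t-i}$, and these counts differ by the offset $d_i$ of $v^i$ from its canonical placement. This is not a technicality: $\sum_{i=1}^{t'}|d_i|$ is precisely $D(\tilde{\lambda})$, so if the pairs really ``dropped out mod $2$'' you would prove the false identity $\delta_\lambda=\delta_{\lambda^0}$; your sketch never produces the factor $\delta_{\tilde{\lambda}}$. The count can be repaired --- the pre-dot $1$'s of $v^i$ plus those of $v^{t-i}$ are congruent to $d_i$ modulo $2$, while $d_0=0$, $d_{t-i}=-d_i$, and $D(\tilde{\lambda})=\sum_k\max(0,-d_k)$ --- but that bookkeeping \emph{is} the content of (i). The paper sidesteps it entirely by induction on the removal of the largest principal hook, distinguishing the case where both affected letters lie in $v^0$ (so $D(\lambda)$ and $D(\lambda^0)$ drop together) from the case where they lie in $v^i$ and $v^{t-i}$ (so $D(\lambda)$ and $D(\tilde{\lambda})$ change parity together).
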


\begin{figure} [h!]
\includegraphics[scale=1.2]{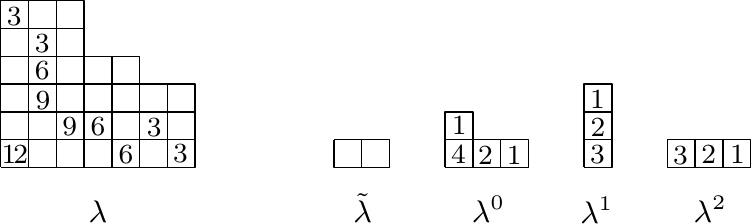}
\caption{\label{fig11}Lemma~\ref{littlewood} illustrated for $\lambda= (7,7,7,5,3,3)$ and $t=3$. The box with hook length $12$ in $\lambda$ corresponds to the box with hook length $4$ in $\lambda^0$. The boxes with hook lengths $9$ in $\lambda$ correspond to the boxes with hook lengths $3$ in $\lambda^1$ and $\lambda^2$.}
\end{figure}

\begin{proof}
Let $\lambda $ be a doubled distinct partition, and let $(\tilde{\lambda}, \lambda^0, \lambda^1,\ldots, \lambda^{t-1})$ be its image under $\Omega$. Set $\tilde{v}:= \psi({\tilde{\lambda}})$,  and $v^i := \psi(\lambda^i)$ for $0\leq i \leq t-1$ where we recall that $\psi$ is defined in \eqref{defpsi}.

We prove (i) by induction on the number of boxes in the principal diagonal of $\lambda$. It is true if $\lambda$ is empty. If $\lambda$ is non-empty, we denote by $\lambda'$ the doubled distinct partition obtained by deleting the largest principal hook length of $\lambda$. Set $\Omega(\lambda'):=(\tilde{\lambda'}, \lambda^{'0 }, \lambda^{'1 },\ldots, \lambda^{'t-1} )$,  $\tilde{v'}:= \psi(\tilde{\lambda'})$, and $ v'^i:=\psi(\lambda^{'i})$ for $0\leq i \leq t-1$. By the induction hypothesis, we have $\delta_ {\lambda'}= \delta_{\tilde{\lambda'}} \delta_{\lambda^{'0}}$. Deleting the largest principal hook length of $\lambda$ corresponds, in terms of words, to turn the first $1$ of $\psi(\lambda)$ into a $0$, and the last $0$ of $\psi(\lambda)$ into a $1$.
Two cases can occur.

{\bf Case 1:} the first $1$ in $\psi(\lambda)$ belongs to $v^0$ (see the example in Figure~\ref{fig13} below). In this case, there are exactly $kt-1$ letters between this $1$ and the symbol ``.", and as $\lambda$ is a doubled distinct partition, there are exactly $kt$ letters between the symbol ``." and the last letter $0$. So the last $0$ belongs also to $v^0$. Turning the first $1$ into a $0$ and the last $0$ into a $1$ actually changes only $v^0$ and deletes the largest principal hook of $\lambda^0$. The $t$-core $\tilde{\lambda}$ does not change. So $\lambda^{'0}$ is equal to the partition $\lambda^0$ in which we delete the largest principal hook and $\delta_\lambda=-\delta_{\lambda'}=-\delta_{\tilde{\lambda'}} \delta_{\lambda^{'0}}=\delta_{\tilde{\lambda}} \delta_{\lambda^0}$.

\begin{figure}[!h]
\includegraphics[scale=1.2]{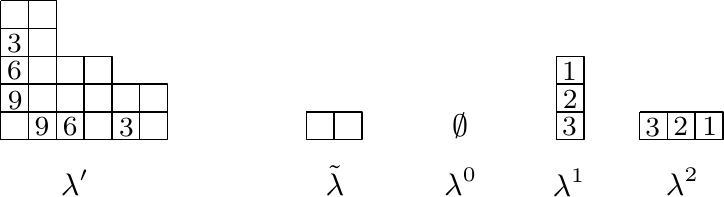}
\vspace*{0.5cm}

\includegraphics[scale=1]{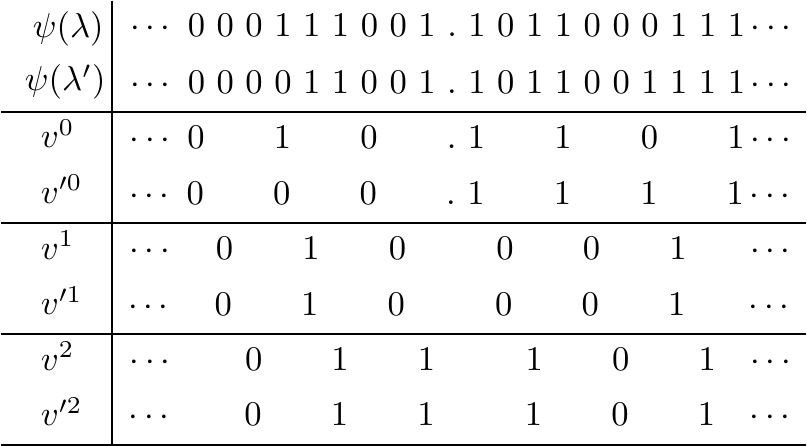}
\caption{\label{fig13}Illustration of the proof of (i) Case 1, for $\lambda=(7,7,7,5,3,3)$ as in Figure~\ref{fig11}.}
\end{figure}

{\bf Case 2:} the first $1$ in $\psi(\lambda)$ belongs to a word $v^i$ (with $1\leq i \leq t$) and the last $0$ in $\psi(\lambda)$ belongs to the word $v^{t-i}$. Turning the first $1$ into a $0$ deletes the first column in $\lambda^i$ and turning the last $0$ into a $1$ deletes the first row in $\lambda^{t-i}$. The partition $\lambda^0$ does not change, so $\delta_{\lambda^{0}}=\delta_{\lambda^{'0}}$. We want actually to prove that the parity of the Durfee square of $\tilde{\lambda}$ is different from the parity of the one of $\tilde{\lambda'}$. By continually replacing the subword 10 by 01 in $v^i$ (respectively $v^{t-i}$), we obtain a subword of the form $\cdots 000111\cdots$, where the last $0$ is in position $k_1$ (respectively $k_2$), where $k_1$ and $k_2$ are integers.
By continually replacing the subword 10 by 01 in $v'^i$ (respectively $v'^{t-i}$), we also obtain a subword of the form $\cdots 000111\cdots$, but here the last $0$ is in position $k_1+1$ (respectively $k_2-1$). According to the respective distances of $k_1$ and $k_2$ to the ``." symbol in $\tilde {v}$, the Durfee square of $\tilde{\lambda'}$  increases or decreases by $1$ the Durfee square of $\tilde{\lambda}$. So $\delta_\lambda=-\delta_{\tilde{\lambda'}}\delta_{\lambda'^0}=\delta_{\tilde{\lambda}
}\delta_{\lambda^0}$.
\medskip

We now prove (ii). Let $v=(j,k)$ be a box of $\lambda^0$ and set $V$ its canonically associated box in $\lambda$. If $\lambda^0$ is empty, the property is trivial. Otherwise, as $\lambda^0$ is a doubled distinct partition, we can write $v^0= \cdots 00w.1 f(w)11\cdots$, where $w$ is a word beginning by $1$ (recall that $f$ is described in Definition~\ref{definitionf}). Assume that $v$ is strictly above the principal diagonal of $\lambda^0$ and in its Durfee square (the other cases are symmetric). Using the one-to-one correspondence between boxes and ordered pairs of integers, we can decompose $w$ in the following way: $w=w_11w_2$ where there are exactly $k-1$ occurrences of $1$ in $w_1$. We can also decompose $f(w)$ in the following way: $f(w)=w_30w_4$, where there are exactly $j-1$ occurrences of $0$ in $w_4$. As $\lambda^0$ is a doubled distinct partition and as $j>k$,  $f(w_1)$ is a suffix of $w_4$. This situation also holds in the canonical representation of $\psi(\lambda)$. We can write $\psi(\lambda)=\cdots 00u_11u_2.1u_30u_411\cdots$, where the pair of the first $1$ after $u_1$ and the first $0$ after $u_3$ corresponds to the box $V$. The word $f(u_1)$ is a suffix of $u_4$, and $u_1$ contains strictly less $1$'s than the number of $0$'s in $u_4$, so the box $V$ is strictly above the principal diagonal of $\lambda$.

\medskip

We now prove (iii). Notice first that for a reason of parity, the boxes $V$ and $V^*$ can not belong to the principal diagonal of $\lambda$. Assume without loss of generality that $k <j$ (and so $V$ is strictly above the principal diagonal in $\lambda$) . We can write $v^i=\cdots00w_11w_20w_311\cdots$, where $w_1$ begins by $1$, $ w_3$ ends by $0$, and there are $k-1$ occurrences of $1$ in $w_1$ and $j-1$ occurrences of $0$ in $w_3$ (where the $1$ and the $0$ before and after $w_2$ correspond to the box $v$). As $\lambda^{2t'+1-i}=\lambda^{i*}$, we can write $v^{t-i}=f(v^i)=\cdots00u_11u_20u_311\cdots$ where $u_1$ begins by $1$, $ u_3$ ends by $0$, and there are $j-1$ occurrences of $1$ in $u_1$ and $k-1$ occurrences of $0$ in $u_3$. Here, the $1$ and the $0$ before and after $u_2$ correspond to the box $v^*$. These informations allow us to describe $\psi(\lambda)$ in the following way:
$ \psi(\lambda)= \cdots00x_11x_21x_30x_40x_511\cdots$, where the $1$ after $x_1$ comes from the $k^{th}$ occurrence of $1$ in $v^i$, the $1$ after $x_2$ comes from the $i^{th}$ occurrence of $1$ in $v^{t-i}$, the $0$ after $x_3$ comes from the $i^{th}$ occurrence of $0$ reading $v^{i}$ from right to left and the $0$ after $x_4$ comes from the $k^{th}$ occurrence of $0$ reading $v^{t-i}$ from right to left. The box $V$ corresponds to the ordered pair given by the $1$ after $x_1$ and the $0$ after $x_3$. Therefore there is at least one more occurrence of $0$ in $x_40x_5$ than the number of occurrences of $1$ in $x_1$, so the box $V$ is strictly above the principal diagonal in $\lambda$. The box $V^*$ corresponds to the ordered pair given by the $1$ after $x_2$ and the $0$ after $x_4$, so $V^*$ is strictly below the principal diagonal in $\lambda$.

\end{proof}

Property (i) of the previous lemma allows us to compute the following signed generating function of doubled distinct $t$-cores, which is surprisingly simpler than the unsigned one given in the proof of Proposition~\ref{propgeneratingfunction}, or in \cite{GKS}.

\begin{lemma}\label{seriegen}
Let $t=2t'+1$ be an odd positive integer. The following equality holds:
\begin{equation}
\sum_{\tilde{\lambda} \in DD_{(t)}} \delta_{\tilde{\lambda}}\, x^{|\tilde{\lambda}|/2}= \prod_{k \geq 1} (1-x ^k) (1-x^{kt})^{t'-1}.
\end{equation}

\end{lemma}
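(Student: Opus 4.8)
The plan is to obtain the signed generating function of $DD_{(t)}$ by \emph{factoring} the corresponding signed generating function over all of $DD$ through the Littlewood decomposition, using Lemma~\ref{littlewood}(i) to localize the sign $\delta$. First I would record the elementary identity
\[
\sum_{\lambda \in DD} \delta_\lambda\, y^{|\lambda|/2} = \prod_{k\ge 1}(1-y^k) = (y;y)_\infty. \qquad (\star)
\]
Indeed, a doubled distinct partition $\lambda$ is determined by the distinct partition $\mu^0$ it doubles, and by construction $|\lambda| = 2|\mu^0|$ while $D(\lambda) = \ell(\mu^0)$ (each part of $\mu^0$ produces one principal hook), so $\delta_\lambda = (-1)^{\ell(\mu^0)}$ and $y^{|\lambda|/2} = y^{|\mu^0|}$. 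Summing over distinct partitions $\mu^0$ and invoking the classical expansion \eqref{partpartdistinct} (with $y$ in place of $x$) yields $(\star)$.

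Next I would feed $(\star)$ into the Littlewood decomposition. Restricted to $DD$, the map $\Omega$ is a bijection onto the vectors $(\tilde{\lambda},\lambda^0,\lambda^1,\dots,\lambda^{t-1})$ in $DD_{(t)}\times DD\times\mathcal{P}^{t-1}$ satisfying $\lambda^{t-i}=\lambda^{i*}$ for $1\le i\le t-1$, with $|\lambda|=|\tilde{\lambda}|+t(|\lambda^0|+\cdots+|\lambda^{t-1}|)$. Since $t=2t'+1$, the constraint pairs $\lambda^i$ with $\lambda^{t-i}$ (conjugate, hence of equal weight) for $1\le i\le t'$, so these $t-1$ quotients are governed by the $t'$ freely chosen partitions $\lambda^1,\dots,\lambda^{t'}$, each contributing weight $(x^t)^{|\lambda^i|}$. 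Crucially, Lemma~\ref{littlewood}(i) gives $\delta_\lambda=\delta_{\tilde{\lambda}}\,\delta_{\lambda^0}$, so the sign depends only on the core and on $\lambda^0$. Substituting all of this into $\sum_{\lambda\in DD}\delta_\lambda x^{|\lambda|/2}$ factorizes it as
\[
\left(\sum_{\tilde{\lambda}\in DD_{(t)}}\delta_{\tilde{\lambda}}\,x^{|\tilde{\lambda}|/2}\right)\left(\sum_{\lambda^0\in DD}\delta_{\lambda^0}\,(x^t)^{|\lambda^0|/2}\right)\prod_{i=1}^{t'}\left(\sum_{\lambda^i\in\mathcal{P}}(x^t)^{|\lambda^i|}\right).
\]

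Finally I would evaluate each known factor. By $(\star)$ with $y=x$ the whole left-hand side equals $(x;x)_\infty$; by $(\star)$ with $y=x^t$ the $\lambda^0$-factor equals $(x^t;x^t)_\infty$; and each of the $t'$ free quotients contributes the ordinary partition generating function $1/(x^t;x^t)_\infty$. Solving for the remaining factor gives
\[
\sum_{\tilde{\lambda}\in DD_{(t)}}\delta_{\tilde{\lambda}}\,x^{|\tilde{\lambda}|/2}=\frac{(x;x)_\infty}{(x^t;x^t)_\infty\,(x^t;x^t)_\infty^{-t'}}=(x;x)_\infty\,(x^t;x^t)_\infty^{t'-1}=\prod_{k\ge1}(1-x^k)(1-x^{kt})^{t'-1},
\]
which is the claim. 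The only genuinely delicate point is the bookkeeping of the restricted Littlewood decomposition: one must check that the conjugacy constraint $\lambda^{t-i}=\lambda^{i*}$ leaves exactly $t'$ freely chosen quotients, each of doubled weight, and---most importantly---that, thanks to Lemma~\ref{littlewood}(i), no sign attaches to these paired quotients, so they contribute only the unsigned factor $(x^t;x^t)_\infty^{-t'}$. Everything else is substitution of standard product formulas.
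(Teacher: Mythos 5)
Your proposal is correct and follows essentially the same route as the paper: both factor the signed generating function over all of $DD$ through the restricted Littlewood decomposition, use Lemma~\ref{littlewood}(i) to localize the sign on the core and $\lambda^0$, evaluate the known factors via $\sum_{\lambda\in DD}\delta_\lambda y^{|\lambda|/2}=(y;y)_\infty$ (equivalent to \eqref{partpartdistinct}), and solve for the $DD_{(t)}$ factor. The bookkeeping of the $t'$ conjugate-paired free quotients is exactly as in the paper's proof.
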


\begin{proof}
 We can use the Littlewood decomposition of doubled distinct partitions and Lemma~\ref{littlewood}~(i) to obtain:
 \begin{equation}
\sum_{\lambda \in DD} \delta_{\lambda}\, x^{|\lambda|/2}=\sum_{\tilde{\lambda} \in DD_{(t)}} \delta_{\tilde{\lambda}}\, x^{|\tilde{\lambda}|/2} \times \sum_{\lambda^0 \in DD} \delta_{\lambda^0}\, x^{t|\lambda^0|/2} \times \left( \sum_{\lambda \in \mathcal{P}} x ^{t|\lambda|} \right)^{t'}. \label{proofseriegen}
 \end{equation}
Denoting by $\mathcal{P}_{D}$ the set of partitions with distinct parts, we also have:
\begin{equation*}
\sum_{\lambda \in DD} \delta_{\lambda}\,x^{|\lambda|/2}= \sum_{{\lambda \in \mathcal{P}_{D}}}(-1)^{\#\{\scriptsize{parts~of~}\lambda\}}x^{|\lambda|}= \prod_{k \geq 1} \left(1-x^k\right).
\end{equation*}
Then \eqref{proofseriegen} can be rewritten:
\begin{equation*}
\prod_{k \geq 1} \left(1-x^k\right)=\left(\sum_{\tilde{\lambda} \in DD_{(t)}} \delta_{\tilde{\lambda}}\, x^{|\tilde{\lambda}|/2}\right) \times \prod_{k \geq 1} \left(1-x^{kt}\right)\times \left( \prod_{k \geq 1} \frac{1}{1-x^{kt}}\right)^{t'},
\end{equation*}
which proves the lemma.
\end{proof}

\subsection{Generalization of Theorem~\ref{theoremeintro}}\label{section4.3}
We prove here Theorem~\ref{generalisation}, which will be seen to generalize Theorem~\ref{theoremeintro}, and we derive several applications.

\begin{proof}[Proof of Theorem~\ref{generalisation}.]
Let $\lambda$ be a doubled distinct partition. We  first transform the expression \begin{equation}\label{eqrhs}\displaystyle \delta_{\lambda} x^{|\lambda|/2} \prod_{ h \in \mathcal{H}_t(\lambda)}\left( y -\frac{yt(2z+2)}{\varepsilon_h ~ h}\right)\end{equation} by using the Littlewood decomposition of $\lambda$. Set $\Omega(\lambda)=(\tilde{\lambda}, \lambda^0, \lambda^1,\ldots, \lambda^{t-1})$. According to Lemma~\ref{littlewood}~(i), we have $\delta_ \lambda= \delta_{\tilde{\lambda}} \delta_{\lambda^0}$.

Let $B_i$ be the multi-set of hook lengths in $\mathcal{H}_{t}(\lambda)$ coming from the boxes of $\lambda$ which correspond to the ones of $\lambda^i$, for $0\leq i\leq t-1$. 
According to property (iii) of the Littlewood decomposition and Lemma~\ref{littlewood}~(ii), we have:
\begin{equation}
\prod_{ h \in B_0} \left(y -\frac{yt(2z+2)}{\varepsilon_h ~ h}\right)= \prod_{h \in \mathcal{H}(\lambda^0)} \left(y-\frac{y(2z+2)}{\varepsilon_h ~h} \right).\label{eq47}
\end{equation}
 
 Let $v=(j,k)$ be a box of $\lambda^i$, with $1\leq i\leq t'$. Denote by $v^*=(k,j)$ the box of $\lambda^{2t'+1-i}=\lambda^{i*}$ and by $V$ and $V^*$ the boxes of $\lambda$ canonically associated to $v$ and $v^*$. 
By Lemma~\ref{littlewood}~(iii), one of the two boxes $V$ and $V^*$ is strictly below the principal diagonal of $\lambda$, and the other is strictly above the principal diagonal of $\lambda$. So we have:
\begin{equation*}\label{eqhV}
\left(y -\frac{yt(2z+2)}{\varepsilon_{h_V} ~ h_V}\right)\left(y -\frac{yt(2z+2)}{\varepsilon_{h_ {V^*}} ~ h_{V^*}}\right)=y^2 -\left(\frac{yt(2z+2)}{ ~ h_V}\right)^2= y^2 -\left(\frac{y(2z+2)}{ h_v}\right)^2,
\end{equation*}
where the last equality follows from $h_V= th_v$ according to property (iii) of the Littlewood decomposition.
Multiplying this over all boxes $V$ in $B_i$ gives:
\begin{equation}
\prod_{ h \in B_i\cup B_{t-i}} \left(y -\frac{yt(2z+2)}{\varepsilon_h ~ h}\right)= \prod_{ h \in \mathcal{H}(\lambda ^ i)} \left(y^2 -\left(\frac{y(2z+2)}{h}\right)^2 \right).\label{eq48}
\end{equation}
Using \eqref{eq47}, \eqref{eq48}, and property (ii) of the Littlewood decomposition,   we can rewrite \eqref{eqrhs} as follows:
\begin{multline*}
\delta_{\lambda} x^{|\lambda|/2} \prod_{ h \in \mathcal{H}_t(\lambda)}\left( y -\frac{yt(2z+2)}{\varepsilon_h ~ h}\right)=\delta_{\tilde{\lambda}} x^{|\tilde{\lambda}|/2} \\\times\delta_{\lambda ^0} x^{t|\lambda^0|/2} \prod_{h \in \mathcal{H}(\lambda^0)} \left(y-\frac{y(2z+2)}{\varepsilon_h ~h} \right)\\\times \prod_{i=1}^{t'}  x^{t|\lambda^i|} \prod_{h \in \mathcal{H}(\lambda^i)} \left(y^2 -\left(\frac{y(2z+2)}{h}\right)^2 \right).
\end{multline*}
We sum this over all doubled distinct partitions. The left-hand side becomes the one of \eqref{eqgeneralisation}, while the right-hand side can be written as a product of three terms. The first one is 
\begin{equation*}\sum_{\tilde\lambda \in DD_{(t)}}\delta_{\tilde{\lambda}} x^{|\tilde{\lambda}|/2}=\prod_{k \geq 1} (1-x ^k) (1-x^{kt})^{t'-1}
\end{equation*} 
by Lemma~\ref{seriegen}, while the second one is 
\begin{equation*}\sum_{\lambda^0 \in DD} \delta_{\lambda ^0} x^{t|\lambda^0|/2} \prod_{h \in \mathcal{H}(\lambda^0)} \left(y-\frac{y(2z+2)}{\varepsilon_h ~h} \right)= \prod_{k \geq 1} \left(1-x^{kt}y^{2k}\right)^{2z^2+z}
 \end{equation*}
  by Theorem~\ref{theoremeintro} applied with $t$ replaced by $z$ and $x$ replaced by $x^ty^2$. 

Finally, the third term is 
\begin{equation*} \left(\sum_{\lambda \in \mathcal{P}}  x^{t|\lambda|} \prod_{h \in \mathcal{H}(\lambda)} \left(y^2 -\left(\frac{y(2z+2)}{h}\right)^2 \right)\right)^{t'} = \left(\prod_{k \geq 1} \left(1-x^{kt}y^{2k}\right)^{(2z+2)^2-1}\right)^{t'}
\end{equation*}
 by the classical Nekrasov--Okounkov formula~\eqref{nekrasov} applied with $x$ replaced by $ x^t y^2$ and $z$ by $(2z+2)^2$.

It only remains to check that $2z^2+z+t'\left((2z+2)^2-1\right)= (2z+1)(zt+3(t-1)/2)$ to finish the proof.
\end{proof}

Notice that when $y=t=1$ we recover Theorem~\ref{theoremeintro}, and when $y=0$, we recover Lemma~\ref{seriegen}. So Theorem~\ref{generalisation} unifies the Macdonald identities generalized by Theorem~\ref{theoremeintro} and the generating function of doubled distinct $t$-cores $\lambda$ with weight $\delta_\lambda$.

Next, we list some consequences of Theorem~\ref{generalisation} on doubled distinct partitions, in the same vein as Han did for partitions \cite{HAN}. In all what follows, $t=2t'+1$ is an odd integer.

\begin{corollary}[$z=-1$ in Theorem~\ref{generalisation}] We have:
\begin{equation}
\sum_{\lambda \in DD} \delta_{\lambda}\, x^{|\lambda|/2} y^{\# \mathcal{H}_t (\lambda)} = \prod_{k \geq 1} (1-x ^k)(1-x^{kt})^ {t'-1} (1-(x ^ty^ 2)^ k)^{1-t'}.
\end{equation}
\end{corollary}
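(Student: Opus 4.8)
The plan is simply to specialize Theorem~\ref{generalisation} at $z=-1$ and to simplify each side; no new combinatorial input is needed, so this is a direct evaluation.

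First I would treat the left-hand side. Setting $z=-1$ forces $2z+2=0$, so every factor in the product over $h \in \mathcal{H}_t(\lambda)$ collapses:
\begin{equation*}
y - \frac{yt(2z+2)}{\varepsilon_h\, h} = y - 0 = y.
\end{equation*}
Hence $\prod_{h \in \mathcal{H}_t(\lambda)}\bigl(y - \tfrac{yt(2z+2)}{\varepsilon_h\, h}\bigr) = y^{\#\mathcal{H}_t(\lambda)}$, and the left-hand side of \eqref{eqgeneralisation} becomes exactly $\sum_{\lambda \in DD} \delta_\lambda\, x^{|\lambda|/2}\, y^{\#\mathcal{H}_t(\lambda)}$, which is the desired left-hand side. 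It is precisely the vanishing of $2z+2$ that turns the weighted product into the purely enumerative statistic counting hook lengths divisible by $t$.

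Next I would simplify the exponent on the right-hand side. Writing $t=2t'+1$, at $z=-1$ we have $2z+1=-1$ and
\begin{equation*}
zt + \frac{3(t-1)}{2} = -t + \frac{3(t-1)}{2} = \frac{t-3}{2} = t'-1,
\end{equation*}
so the exponent $(2z+1)\bigl(zt+\tfrac{3(t-1)}{2}\bigr)$ equals $-(t'-1)=1-t'$. Using $x^{kt}y^{2k}=(x^ty^2)^k$, the right-hand side of \eqref{eqgeneralisation} therefore becomes
\begin{equation*}
\prod_{k \geq 1}(1-x^k)(1-x^{kt})^{t'-1}\bigl(1-(x^t y^2)^k\bigr)^{1-t'},
\end{equation*}
which matches the claimed right-hand side and completes the argument.

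There is no genuine obstacle here: the statement is a one-line specialization of Theorem~\ref{generalisation}. The only points requiring (minor) care are the arithmetic simplification of the exponent and the observation that $2z+2$ vanishes at $z=-1$, and both are routine.
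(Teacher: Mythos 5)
Your proposal is correct and is exactly the paper's (implicit) argument: the corollary is obtained by the direct specialization $z=-1$ in Theorem~\ref{generalisation}, with the product collapsing to $y^{\#\mathcal{H}_t(\lambda)}$ since $2z+2=0$ and the exponent simplifying to $-(t'-1)=1-t'$ via $t=2t'+1$. Your arithmetic checks out and nothing further is needed.
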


\begin{corollary}[$z=-1,~ y=1$ in Theorem~\ref{generalisation}] We have:
\begin{equation}\label{eq42}\sum_{\lambda \in DD} \delta_\lambda\, x^{|\lambda|/2} = \prod_{k \geq 1} (1-x^k).
\end{equation}
\end{corollary}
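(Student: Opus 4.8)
The plan is to obtain \eqref{eq42} as the two-parameter specialization $z=-1$, $y=1$ of Theorem~\ref{generalisation}, which collapses the right-hand side to a single infinite product. First I would substitute $z=-1$ into \eqref{eqgeneralisation}. On the left-hand side the factor $y-\frac{yt(2z+2)}{\varepsilon_h\,h}$ becomes $y-0=y$ for each hook length in $\mathcal{H}_t(\lambda)$, since $2z+2=0$; setting $y=1$ turns this product into the empty product equal to $1$, so the summand reduces to $\delta_\lambda\,x^{|\lambda|/2}$ and the left-hand side becomes exactly the sum in \eqref{eq42}.

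Next I would examine the right-hand side of \eqref{eqgeneralisation} under $z=-1$, $y=1$. The exponent $(2z+1)(zt+3(t-1)/2)$ specializes to $(-1)\bigl(-t+3(t-1)/2\bigr)=(-1)\bigl((t-3)/2\bigr)=(3-t)/2$, and the base $1-x^{kt}y^{2k}$ becomes $1-x^{kt}$. Thus the product over $k\geq 1$ reads
\begin{equation*}
\prod_{k \geq 1}(1-x^k)(1-x^{kt})^{t'-1}(1-x^{kt})^{(3-t)/2}.
\end{equation*}
Since $t=2t'+1$, we have $(3-t)/2 = (3-(2t'+1))/2 = 1-t'$, so the two exponents of $(1-x^{kt})$ are $t'-1$ and $1-t'$, which sum to $0$. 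Hence the factors involving $x^{kt}$ cancel completely and the right-hand side collapses to $\prod_{k\geq1}(1-x^k)$, matching the right-hand side of \eqref{eq42}.

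There is essentially no obstacle here beyond bookkeeping: the corollary is a direct evaluation, the only point requiring a moment's care being the verification that the two powers of $(1-x^{kt})$ are genuinely negatives of one another, i.e.\ that $(3-t)/2 = 1-t'$ for $t=2t'+1$. As a consistency check, one notes that the resulting identity $\sum_{\lambda\in DD}\delta_\lambda\,x^{|\lambda|/2}=\prod_{k\geq1}(1-x^k)$ also appears independently in the proof of Lemma~\ref{seriegen} (the $\lambda^0$ computation there), and coincides with the $y=t=1$ reduction recovering Theorem~\ref{theoremeintro}, so the specialization is internally coherent.
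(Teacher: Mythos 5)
Your proposal is correct and follows exactly the route the paper intends: the corollary is stated as the direct specialization $z=-1$, $y=1$ of Theorem~\ref{generalisation}, and your bookkeeping --- each factor on the left collapsing to $y=1$ since $2z+2=0$, and the exponent $(2z+1)(zt+3(t-1)/2)$ evaluating to $(3-t)/2=1-t'$ so that the powers of $(1-x^{kt})$ cancel --- is precisely the verification the paper leaves implicit. Your consistency check against Lemma~\ref{seriegen} and the equivalence with \eqref{partpartdistinct} matches the paper's own remark following the corollary.
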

Note that by definition of $DD$ and $\delta_\lambda$, \eqref{eq42} is equivalent to \eqref{partpartdistinct}.

\begin{corollary}[$z=-1$, $y=\sqrt{-1}$ in Theorem~\ref{generalisation}] We have:
\begin{equation}
\sum_{\lambda \in DD} \delta_\lambda\, x^{|\lambda|/2} (-1)^{\# \mathcal{H}_t(\lambda)/2} = \prod_{k \geq 1} (1-x^k)\prod_{k\mbox{\scriptsize \,odd\,}\geq 1} \left(\frac{1-x^{kt}}{1+x^{kt}}\right)^{t'-1}.
\end{equation}

\end{corollary}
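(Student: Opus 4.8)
The plan is to obtain this identity by specializing Theorem~\ref{generalisation} at $z=-1$ and $y=\sqrt{-1}$, starting from the intermediate form already recorded in the $z=-1$ specialization. Setting $z=-1$ in \eqref{eqgeneralisation} makes $2z+2=0$, so each factor $y-yt(2z+2)/(\varepsilon_h h)$ collapses to $y$ and $\prod_{h\in\mathcal{H}_t(\lambda)}$ becomes $y^{\#\mathcal{H}_t(\lambda)}$; meanwhile, writing $t=2t'+1$, the exponent $(2z+1)(zt+3(t-1)/2)$ evaluates to $(-1)(t'-1)=1-t'$. This yields
\[
\sum_{\lambda\in DD}\delta_\lambda\,x^{|\lambda|/2}y^{\#\mathcal{H}_t(\lambda)}=\prod_{k\ge1}(1-x^k)(1-x^{kt})^{t'-1}(1-x^{kt}y^{2k})^{1-t'},
\]
which I take as the starting point (it is the first corollary above).

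Next I would substitute $y=\sqrt{-1}$, so that $y^2=-1$. This is legitimate because, for fixed odd $t$, both sides of the displayed identity are power series in $x$ whose $x$-coefficients are polynomials in $y$, so the equality persists at any complex value of $y$ (with $|x|<1$). The only delicate point — the step I expect to demand the most care — is to recast the left-hand side as the one in the statement, for which I must check that $\#\mathcal{H}_t(\lambda)$ is always even, so that $(\sqrt{-1})^{\#\mathcal{H}_t(\lambda)}=(-1)^{\#\mathcal{H}_t(\lambda)/2}$.

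To establish this evenness I would invoke the Littlewood decomposition $\Omega(\lambda)=(\tilde\lambda,\lambda^0,\ldots,\lambda^{t-1})$. Property (iii) of $\Omega$ gives a bijection between the boxes of $\lambda$ whose hook length is a multiple of $t$ and the boxes of the quotient, so $\#\mathcal{H}_t(\lambda)=\sum_{i=0}^{t-1}|\lambda^i|$. The $DD$-structure of the quotient recalled in Section~\ref{section4.2} yields $\lambda^{t-i}=\lambda^{i*}$, hence $|\lambda^{t-i}|=|\lambda^i|$ for $1\le i\le t'$, so that $\#\mathcal{H}_t(\lambda)=|\lambda^0|+2\sum_{i=1}^{t'}|\lambda^i|$. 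Since $\lambda^0\in DD$ and every doubled distinct partition of some $\mu^0$ has even weight $2|\mu^0|$, the term $|\lambda^0|$ is even; therefore $\#\mathcal{H}_t(\lambda)$ is even and the left-hand side becomes $\sum_{\lambda\in DD}\delta_\lambda x^{|\lambda|/2}(-1)^{\#\mathcal{H}_t(\lambda)/2}$.

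Finally, on the right-hand side I would replace $y^{2k}$ by $(-1)^k$ and merge the two $t$-dependent factors into $\prod_{k\ge1}\bigl((1-x^{kt})/(1-(-1)^k x^{kt})\bigr)^{t'-1}$. Splitting this product according to the parity of $k$, the even indices contribute $1$ and the odd indices contribute $\bigl((1-x^{kt})/(1+x^{kt})\bigr)^{t'-1}$; combined with the surviving factor $\prod_{k\ge1}(1-x^k)$, this is exactly the claimed right-hand side. The substance of the argument is thus the parity of $\#\mathcal{H}_t(\lambda)$; the remainder is a routine specialization and rearrangement.
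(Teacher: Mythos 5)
Your proof is correct and follows essentially the same route as the paper: specialize Theorem~\ref{generalisation} at $z=-1$, $y=\sqrt{-1}$ and split the resulting product over the parity of $k$. The only difference is that you explicitly verify via the Littlewood decomposition that $\#\mathcal{H}_t(\lambda)$ is even (so that $(-1)^{\#\mathcal{H}_t(\lambda)/2}$ makes sense), a point the paper leaves implicit; that check is a welcome addition and your argument for it is sound.
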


\begin{proof}
By setting $z=-1$ and $y=\sqrt{-1}$ in Theorem~\ref{generalisation},
we obtain:
\begin{eqnarray*}
\sum_{\lambda \in DD} 
\delta_\lambda\, x^{|\lambda|/2} (-1)^{\# \mathcal{H}_t(\lambda)/2} &= \displaystyle\prod_{k \geq 1} (1-x^k)(1-x^{kt})^{t'-1}(1-(-1)^k x^{kt})^{1-t'}
\\&=\frac{\displaystyle\prod_{k \geq 1} (1-x^k)(1-x^{kt})^{t'-1} }{\displaystyle\prod_{k\mbox{\scriptsize \,even}\,\geq 2} (1-x^{kt})^{t'-1} \displaystyle\prod_{k\mbox{\scriptsize \,odd}\,\geq 1} (1+x^{kt})^{t'-1}}
\\&=\displaystyle\prod_{k \geq 1} (1-x^k)\displaystyle\prod_{ k\mbox{\scriptsize \,odd}\,\geq 1} \left(\frac{1-x^{kt}}{1+x^{kt}}\right)^{t'-1}.\hspace*{1cm}
\end{eqnarray*}
\end{proof}

\begin{corollary}[$y=1$ in Theorem~\ref{generalisation}]\label{y=1} We have:
\begin{equation}
\sum_{\lambda \in DD} \delta_{\lambda}\, x^{|\lambda|/2} \prod_{ h \in \mathcal{H}_t(\lambda)} \left(1 -\frac{t(2z+2)}{\varepsilon_h ~ h}\right)= \prod_{k \geq 1}  (1-x ^k)(1-x^{kt})^ {(z+1)(2zt+2t-3)} .
\end{equation}
\end{corollary}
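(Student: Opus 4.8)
The plan is to derive this corollary as the direct specialization $y=1$ of Theorem~\ref{generalisation}, so that essentially no new argument beyond the master identity is required. First I would set $y=1$ in \eqref{eqgeneralisation}. On the left-hand side each factor $y - yt(2z+2)/(\varepsilon_h h)$ collapses to $1 - t(2z+2)/(\varepsilon_h h)$, which is exactly the summand appearing in the statement of the corollary; thus the left-hand sides coincide immediately, with no further manipulation.

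The only point requiring care is the right-hand side. Setting $y=1$ turns the factor $(1-x^{kt}y^{2k})$ into $(1-x^{kt})$, which can be absorbed into the factor $(1-x^{kt})^{t'-1}$ already present. The whole product then telescopes into $\prod_{k\geq 1}(1-x^k)(1-x^{kt})^{E}$, where the combined exponent is
\[
E=(t'-1)+(2z+1)\left(zt+\frac{3(t-1)}{2}\right).
\]
It then remains only to check that $E=(z+1)(2zt+2t-3)$, the exponent claimed in the corollary.

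This last verification is the (entirely elementary) heart of the proof. Substituting $t=2t'+1$, so that $t'=(t-1)/2$ and hence $3(t-1)/2=3t'$, both expressions become polynomials in $z$ and $t$, and a routine expansion of each side shows they agree term by term. Since no convergence estimate or combinatorial input is needed beyond Theorem~\ref{generalisation} itself, I expect the only conceivable obstacle to be a sign or bookkeeping slip in this exponent identity, which a direct expansion settles at once.
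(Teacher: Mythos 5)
Your proposal is correct and is exactly the argument the paper intends: the corollary is obtained by setting $y=1$ in Theorem~\ref{generalisation} and combining the exponents, and your verification that $t'-1+(2z+1)\left(zt+3(t-1)/2\right)=(z+1)(2zt+2t-3)$ (using $t'=(t-1)/2$) checks out.
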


\begin{corollary}[$y=1$ and compare the coefficients of $z+1$ in Theorem~\ref{generalisation}] We have:
\begin{equation}
\sum_{\lambda \in DD} \delta_\lambda\, x^{|\lambda|/2}\sum_{\stackrel{h \in \mathcal{H}_t(\lambda)}{h \in \Delta}}\frac{1}{h}=\frac{-1}{2t}\prod_{k \geq 1} (1-x^k) \sum_{m \geq 1} \frac{x^{tm}}{m(1-x^{tm})},
\end{equation}
where we recall that $\Delta$ is the set of principal hook lengths of $\lambda$.
\end{corollary}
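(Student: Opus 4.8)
The plan is to specialize Theorem~\ref{generalisation} at $y=1$, i.e.\ to start from Corollary~\ref{y=1},
\begin{equation*}
\sum_{\lambda \in DD} \delta_{\lambda}\, x^{|\lambda|/2} \prod_{ h \in \mathcal{H}_t(\lambda)} \left(1 -\frac{t(2z+2)}{\varepsilon_h \, h}\right)= \prod_{k \geq 1}  (1-x ^k)(1-x^{kt})^ {(z+1)(2zt+2t-3)},
\end{equation*}
and to compare the coefficients of $(z+1)^1$ on the two sides. Both sides are power series in $z+1$ whose value at $z=-1$ is $\prod_{k\geq1}(1-x^k)$, so this comparison amounts to differentiating with respect to $z$ and evaluating at $z=-1$.

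For the right-hand side I would set $E(z):=(z+1)(2zt+2t-3)$, so that $E(-1)=0$ and $E'(-1)=-3$. As every factor $(1-x^{kt})^{E(z)}$ equals $1$ at $z=-1$, the derivative of the right-hand side at $z=-1$ is $\prod_{k\geq1}(1-x^k)\cdot(-3)\sum_{k\geq1}\log(1-x^{kt})$. Applying~\eqref{eqex} with $x$ replaced by $x^t$ gives $\sum_{k\geq1}\log(1-x^{kt})=-\sum_{m\geq1}\frac{x^{tm}}{m(1-x^{tm})}$, so the derivative of the right-hand side equals $3\prod_{k\geq1}(1-x^k)\sum_{m\geq1}\frac{x^{tm}}{m(1-x^{tm})}$.

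For the left-hand side, each factor $1-\frac{2t(z+1)}{\varepsilon_h h}$ equals $1$ at $z=-1$, so differentiating the product at $z=-1$ contributes, for each $\lambda$, the quantity $-2t\sum_{h\in\mathcal{H}_t(\lambda)}\frac{1}{\varepsilon_h h}$. The heart of the proof is then the structural identity, valid for every doubled distinct partition $\lambda$ when $t$ is odd,
\begin{equation*}
\sum_{h\in\mathcal{H}_t(\lambda)}\frac{1}{\varepsilon_h h}=3\sum_{\substack{h\in\mathcal{H}_t(\lambda)\\ h\in\Delta}}\frac{1}{h}.
\end{equation*}
I would establish it from the three symmetry properties of doubled distinct partitions recalled in Section~\ref{propdd}, which together account for all boxes of $\lambda$. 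The first and third properties show that, apart from the $D(\lambda)$ diagonal boxes and the $D(\lambda)$ boxes described next, the remaining off-diagonal boxes split into pairs of equal hook length lying on opposite sides of the principal diagonal; each such pair contributes $\frac1h-\frac1h=0$ to the signed sum, and since $t$ is odd these pairings preserve membership in $\mathcal{H}_t(\lambda)$. What is left uncancelled are the $D(\lambda)$ diagonal boxes, whose hooks form $\Delta$, together with the $D(\lambda)$ neighbouring boxes whose hooks are, by the second property, exactly the halves $h/2$ of the diagonal hooks $h\in\Delta$; for $t$ odd one has $h\in\mathcal{H}_t(\lambda)\iff h/2\in\mathcal{H}_t(\lambda)$. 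Both of these families carry $\varepsilon=+1$, so each $h\in\Delta\cap\mathcal{H}_t(\lambda)$ contributes $\frac1h+\frac1{h/2}=\frac3h$, which is the asserted identity.

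Granting the structural identity, the comparison of the $(z+1)$-coefficients reads
\begin{equation*}
-6t\sum_{\lambda\in DD}\delta_\lambda\, x^{|\lambda|/2}\sum_{\substack{h\in\mathcal{H}_t(\lambda)\\ h\in\Delta}}\frac{1}{h}=3\prod_{k\geq1}(1-x^k)\sum_{m\geq1}\frac{x^{tm}}{m(1-x^{tm})},
\end{equation*}
and dividing by $-6t$ yields the stated formula with constant $\frac{-1}{2t}$. The main obstacle is the structural identity, and within it the determination of the common sign $\varepsilon=+1$ for a diagonal box and its half-hook neighbour: an incorrect sign there would turn the factor $3$ into $-1$ and destroy the final constant. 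This sign must therefore be pinned down carefully; it can be cross-checked against the parity identity $(-1)^{\#\mathcal{H}(\lambda)_{\leq}}=(-1)^{|\lambda|/2}\delta_\lambda$ already used in the proof of Theorem~\ref{kostantpet}.
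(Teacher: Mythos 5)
Your proposal is correct and follows exactly the route the paper indicates in its bracketed hint (set $y=1$, i.e.\ start from Corollary~\ref{y=1}, and compare coefficients of $z+1$); the paper gives no further details. The one step the paper leaves implicit, the structural identity $\sum_{h\in\mathcal{H}_t(\lambda)}\frac{1}{\varepsilon_h h}=3\sum_{h\in\mathcal{H}_t(\lambda)\cap\Delta}\frac{1}{h}$, is established correctly by your pairing argument from the three symmetry properties of Section~\ref{propdd} (with the oddness of $t$ used precisely where you say), and it is consistent with the companion identity $\sum_{h}h\varepsilon_h=3|\lambda|/2$ invoked after Corollary~\ref{4.13}.
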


\begin{corollary}[$2z+2 =-b/y$, $y \rightarrow 0$ in Theorem~\ref{generalisation}] \label{cor6} We have:
\begin{multline}
\sum_{\lambda \in DD}  \delta_\lambda\, x^{|\lambda|/2} \prod_{ h \in \mathcal{H}_t(\lambda)} \frac{bt}{ h\, \varepsilon_h} =\exp(-tb^2 x^t
/2) \prod_{k \geq 1}  (1-x ^k)(1-x^{kt})^ {t'-1}.
\label{eqexp}\end{multline}
\end{corollary}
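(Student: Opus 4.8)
The plan is to specialize Theorem~\ref{generalisation} through the substitution $2z+2=-b/y$ and then let $y\to 0$, comparing both sides coefficientwise in $x$. Since for each fixed power of $x$ the sum over $DD$ involves only finitely many partitions $\lambda$ (those with $|\lambda|/2$ equal to that power), and each multi-set $\mathcal{H}_t(\lambda)$ is finite, all the limits may be taken termwise; thus no analytic subtlety arises from the infinite sum or from the infinite products, and it suffices to understand the pointwise limit of each factor.

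First I would treat the left-hand side. Under $2z+2=-b/y$, each factor $y-\tfrac{yt(2z+2)}{\varepsilon_h h}$ becomes $y+\tfrac{tb}{\varepsilon_h h}$, and as $y\to 0$ it tends to $\tfrac{bt}{h\,\varepsilon_h}$ (using $\varepsilon_h=\pm 1$). Hence the left-hand side converges to $\sum_{\lambda\in DD}\delta_\lambda\,x^{|\lambda|/2}\prod_{h\in\mathcal{H}_t(\lambda)}\tfrac{bt}{h\,\varepsilon_h}$, which is exactly the left-hand side of~\eqref{eqexp}.

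Next I would handle the right-hand side. The factor $\prod_{k\ge 1}(1-x^k)(1-x^{kt})^{t'-1}$ is independent of $y$ and survives unchanged, so all the $y$-dependence sits in $\bigl(\prod_{k\ge 1}(1-x^{kt}y^{2k})\bigr)^{E}$ with exponent $E:=(2z+1)(zt+3(t-1)/2)$. A direct computation with $z=-1-\tfrac{b}{2y}$ gives $E=\tfrac{b^2 t}{2y^2}+\tfrac{3b}{2y}-\tfrac{t-3}{2}$. Writing $P(y):=\prod_{k\ge 1}(1-x^{kt}y^{2k})$, one finds $\ln P(y)=-x^t y^2+O(y^4)$, so that $E\,\ln P(y)=-\tfrac12 b^2 t\,x^t+O(y)$. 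Therefore $P(y)^{E}\to\exp(-tb^2x^t/2)$, and the right-hand side tends to $\exp(-tb^2x^t/2)\prod_{k\ge 1}(1-x^k)(1-x^{kt})^{t'-1}$, as claimed in~\eqref{eqexp}.

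The main calculation, and the only step requiring care, is this last limit: the exponent $E$ blows up like $1/y^2$ while $\ln P(y)$ vanishes like $y^2$, and their product is finite precisely because only the $k=1$ term of $P$ contributes at order $y^2$, every higher $k$ contributing $O(y^{2k})$ with $2k\ge 4$. Matching these orders against the Laurent expansion of $E$ is what isolates the single surviving constant $-tb^2x^t/2$ and produces the exponential factor; the subleading parts $\tfrac{3b}{2y}$ and $-\tfrac{t-3}{2}$ of $E$, multiplied by $\ln P(y)=-x^ty^2+O(y^4)$, only yield $O(y)$ corrections and drop out in the limit. Everything else is routine bookkeeping.
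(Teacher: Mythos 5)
Your proposal is correct and follows essentially the same route as the paper: substitute $2z+2=-b/y$, take the limit termwise on the left-hand side, and on the right-hand side control $\bigl(\prod_{k\ge 1}(1-x^{kt}y^{2k})\bigr)^{E}$ by matching the Laurent expansion of the exponent $E$ against the logarithm of the product via the identity \eqref{eqex}, so that only the $k=1$ term at order $y^2$ survives and yields $\exp(-tb^2x^t/2)$. The only cosmetic difference is that the paper first splits off the bounded part of the exponent (writing $E=2tz^2+4zt-3z+3t'$ and noting $P(y)^{3t'}\to 1$) before applying the exponential formula, whereas you expand $E$ as a single Laurent polynomial in $y$; the computations agree.
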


\begin{proof}
By setting $2z+2 =-b/y$, and $y \rightarrow 0$ in Theorem~\ref{generalisation}, the left-hand side becomes exactly the one of \eqref{eqexp}. 

On the right-hand side, the term $\displaystyle\prod_{k \geq 1} (1-x ^k)(1-x^{kt})^ {t'-1}$ does not change.
It remains to examine $\displaystyle\prod_{k \geq 1} \left(1-(x^ty^2)^k \right)^{2tz^2+4zt-3z+3t'}$. As $y \rightarrow 0$, we have: \begin{equation*}\lim_{y \to 0} \displaystyle\prod_{k \geq 1} \left(1-(x^ty^2)^k \right)^{3t'}=1.\end{equation*}
By using \eqref{eqex}, we can write:
\begin{eqnarray*}
\prod_{k \geq 1} \left(1-(x^ty^2)^k \right)^{2tz^2+4zt-3z} =&\exp\left( -(2tz^2+4zt-3z) \displaystyle\sum_{k \geq 1} \frac{(x^ty^2)^k}{k(1-(x^ty^2))^k}\right) \\ =&\exp \left(\displaystyle-\frac{tb ^2} {2} \left(x^t+ O(y^2)\right)\right)\hspace{3cm}\\ \underset{y \to 0}{\longrightarrow}&\exp\left(\displaystyle-\frac{tb ^2x^t}{2}\right),\hspace*{5cm}
\end{eqnarray*}
where $O(y^2)$ is a term which tends to $0$ when $y \rightarrow 0$, and the corollary follows.
\end{proof}

\begin{corollary}[$t=1$ in Corollary~\ref{cor6}]
\begin{equation}
\sum_{\lambda \in DD} x^{|\lambda|/2} \prod_{h \in \mathcal{H}(\lambda)} \frac{b}{h} = \exp(b ^2 x/2).
\end{equation}

\end{corollary}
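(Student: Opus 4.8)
The plan is to obtain this as a straightforward specialization of Corollary~\ref{cor6} by substituting $t=1$ (equivalently $t'=0$). First I would verify that the hypotheses are compatible: Corollary~\ref{cor6} is stated for any odd positive integer $t=2t'+1$, and $t=1$ corresponds to $t'=0$, which is admissible. Under this substitution the multi-set $\mathcal{H}_t(\lambda)=\mathcal{H}_1(\lambda)$ consists of all hook lengths that are integral multiples of $1$, hence $\mathcal{H}_1(\lambda)=\mathcal{H}(\lambda)$, the full hook-length multi-set. Thus the product $\prod_{h \in \mathcal{H}_t(\lambda)} \frac{bt}{h\,\varepsilon_h}$ becomes $\prod_{h \in \mathcal{H}(\lambda)} \frac{b}{h\,\varepsilon_h}$.

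The key simplification to carry out is on the right-hand side. With $t=1$ the exponent $t'-1$ equals $-1$, so the factor $(1-x^{kt})^{t'-1}=(1-x^k)^{-1}$ exactly cancels the factor $(1-x^k)$, leaving the infinite product equal to $1$. The exponential prefactor $\exp(-tb^2x^t/2)$ becomes $\exp(-b^2x/2)$. I would need to reconcile the sign: the target statement has $\exp(b^2x/2)$ rather than $\exp(-b^2x/2)$. The resolution is that on the left-hand side, since $t=1$ every box lies either strictly above or weakly below the diagonal, and one checks via the involution pairing a box with its transpose (together with the defining relations of doubled distinct partitions recalled after Figure~\ref{fig5.0}) that the signs $\varepsilon_h$ combine with $\delta_\lambda$ so that $\delta_\lambda \prod_{h}\frac{b}{h\,\varepsilon_h}$ reduces to $\prod_{h \in \mathcal{H}(\lambda)}\frac{b}{h}$ with the overall sign absorbing the minus in the exponential, yielding $\exp(b^2x/2)$.

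The one genuinely delicate point is this bookkeeping of signs: I would make explicit how the $\varepsilon_h$ factors over off-diagonal hooks cancel in conjugate pairs and how the remaining sign from the diagonal hooks combines with $\delta_\lambda=(-1)^{D(\lambda)}$. Concretely, for a doubled distinct $\lambda$ the boxes strictly above the diagonal are in bijection (by transposition) with boxes strictly below, so the number of $h$ with $\varepsilon_h=-1$ equals the number of above-diagonal boxes, and the parity of this count relative to $D(\lambda)$ is exactly what produces the sign flip converting $-b^2x/2$ to $b^2x/2$. Once this is settled, the identity follows immediately by term-by-term comparison.

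\begin{proof}[Proof sketch]
Set $t=2t'+1=1$, so that $t'=0$, in Corollary~\ref{cor6}. Then $\mathcal{H}_t(\lambda)=\mathcal{H}_1(\lambda)=\mathcal{H}(\lambda)$ and the left-hand side of \eqref{eqexp} becomes $\sum_{\lambda \in DD}\delta_\lambda\, x^{|\lambda|/2}\prod_{h \in \mathcal{H}(\lambda)}\frac{b}{h\,\varepsilon_h}$. On the right-hand side, the factor $(1-x^{kt})^{t'-1}=(1-x^k)^{-1}$ cancels $(1-x^k)$, so the infinite product equals $1$, and the prefactor becomes $\exp(-b^2x/2)$. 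Finally, the sign $\delta_\lambda \prod_h \varepsilon_h^{-1}$ equals $+1$ after accounting, through the transposition involution on off-diagonal boxes, for the cancellation of the $\varepsilon_h$ and the parity relation with $\delta_\lambda$; this flips the sign of the exponential argument, giving $\exp(b^2x/2)$ and the stated formula.
\end{proof}
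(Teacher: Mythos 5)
Your setup is right (with $t=1$ the multi-set $\mathcal{H}_1(\lambda)=\mathcal{H}(\lambda)$, the infinite product on the right of \eqref{eqexp} collapses to $1$, and the prefactor becomes $\exp(-b^2x/2)$), but the sign bookkeeping — which you yourself flag as the delicate point — is wrong, and this is a genuine gap. The claim that $\delta_\lambda \prod_{h\in\mathcal{H}(\lambda)}\varepsilon_h^{-1}=+1$ for every $\lambda \in DD$ is false: for the doubled distinct partition $\lambda=(2)$ one has $D(\lambda)=1$, both boxes lie weakly below the diagonal so $\prod_h \varepsilon_h = 1$, and hence $\delta_\lambda\prod_h\varepsilon_h=-1$. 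The correct identity, which is what the paper uses, is
\begin{equation*}
\delta_\lambda \prod_{h \in \mathcal{H}(\lambda)} \varepsilon_h = (-1)^{|\lambda|/2},
\end{equation*}
which follows because for $\lambda\in DD$ the number of boxes carrying $\varepsilon_h=-1$ is $|\lambda|/2 - D(\lambda)$ (the boxes strictly above the diagonal are in bijection with those strictly below up to a deficit of $D(\lambda)$, and $|\lambda|=2\,\#\{\mbox{boxes on one side}\}$). Crucially, this sign is $\lambda$-dependent, and your argument is internally inconsistent on exactly this point: a constant factor of $+1$ cannot ``flip the sign of the exponential argument,'' and indeed no constant sign can, since $\sum_{\lambda\in DD} x^{|\lambda|/2}\prod_h (b/h)$ has nonnegative coefficients while $\pm\exp(-b^2x/2)$ does not.

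The missing step is the substitution $x \mapsto -x$. After inserting the correct sign identity, the $t=1$ case of Corollary~\ref{cor6} reads
\begin{equation*}
\sum_{\lambda \in DD} (-1)^{|\lambda|/2}\, x^{|\lambda|/2} \prod_{h \in \mathcal{H}(\lambda)} \frac{b}{h} = \exp(-b^2x/2),
\end{equation*}
and replacing $x$ by $-x$ absorbs the factor $(-1)^{|\lambda|/2}$ on the left (note $|\lambda|/2$ is an integer for $\lambda\in DD$) while simultaneously turning the right-hand side into $\exp(b^2x/2)$, which is the stated formula. So the structure of your specialization matches the paper's, but the sign lemma must be corrected to the $\lambda$-dependent form above and the change of variable $x\to -x$ must be performed; without it the argument proves a false identity.
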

\begin{proof}
After setting $t=1$ in Corollary~\ref{cor6}, we can notice that 
\begin{equation}\label{eqsign}
\displaystyle\delta_\lambda \prod_{h \in  \mathcal{H}(\lambda)} \varepsilon_h = (-1) ^{|\lambda|/2}.
\end{equation}
The result follows when we replace $x$ by $-x$.
\end{proof}

\begin{corollary}\label{4.11}[compare the coefficients of $b^ {2n} x^ {tn}$ in Corollary~\ref{cor6}] \label{cor9} We have for any nonnegative integer $n$:
\begin{equation}
\sum_{\stackrel{ \lambda \in DD, ~ |\lambda|=2t n}{\# \mathcal{H}_t(\lambda)=2n }}\displaystyle\;\delta_\lambda \prod _{h \in \mathcal{H}_t (\lambda)} \frac{1}{ h\, \varepsilon_h}=\displaystyle \frac{(-1)^n}{n! t^n 2^n}.
\end{equation}
\end{corollary}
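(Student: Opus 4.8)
The plan is to read off the coefficient of $b^{2n}x^{tn}$ on both sides of the identity in Corollary~\ref{cor6}, namely
\begin{equation*}
\sum_{\lambda \in DD} \delta_\lambda\, x^{|\lambda|/2} \prod_{h \in \mathcal{H}_t(\lambda)} \frac{bt}{h\, \varepsilon_h} = \exp(-tb^2 x^t/2) \prod_{k \geq 1} (1-x^k)(1-x^{kt})^{t'-1},
\end{equation*}
viewed as a formal power series in the two variables $b$ and $x$. On the left-hand side, since the product $\prod_{h \in \mathcal{H}_t(\lambda)} \frac{bt}{h\varepsilon_h}$ carries no power of $x$, the monomial attached to a fixed $\lambda \in DD$ is a scalar times $b^{\#\mathcal{H}_t(\lambda)}x^{|\lambda|/2}$. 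Thus a term contributes to $b^{2n}x^{tn}$ exactly when $\#\mathcal{H}_t(\lambda)=2n$ and $|\lambda|=2tn$, and for such a $\lambda$ the product equals $(bt)^{2n}\prod_{h\in\mathcal{H}_t(\lambda)}\frac{1}{h\varepsilon_h}$. Hence the coefficient of $b^{2n}x^{tn}$ on the left is
\begin{equation*}
t^{2n}\sum_{\substack{\lambda\in DD,\ |\lambda|=2tn\\ \#\mathcal{H}_t(\lambda)=2n}}\delta_\lambda\prod_{h\in\mathcal{H}_t(\lambda)}\frac{1}{h\,\varepsilon_h},
\end{equation*}
which, up to the factor $t^{2n}$, is exactly the sum appearing in the statement.

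Next I would extract the same coefficient on the right. The only source of the variable $b$ is the exponential, whose expansion $\exp(-tb^2x^t/2)=\sum_{j\geq0}\frac{(-1)^j t^j x^{tj}}{j!\,2^j}b^{2j}$ contributes a $b^{2n}$ only through the term $j=n$, giving $\frac{(-1)^n t^n x^{tn}}{n!\,2^n}b^{2n}$. Since this already supplies the full power $x^{tn}$, I must then take the constant term in $x$ of $\prod_{k\geq1}(1-x^k)(1-x^{kt})^{t'-1}$, which is $1$. Therefore the coefficient of $b^{2n}x^{tn}$ on the right is $\frac{(-1)^n t^n}{n!\,2^n}$.

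Equating the two coefficients and dividing by $t^{2n}$ yields
\begin{equation*}
\sum_{\substack{\lambda\in DD,\ |\lambda|=2tn\\ \#\mathcal{H}_t(\lambda)=2n}}\delta_\lambda\prod_{h\in\mathcal{H}_t(\lambda)}\frac{1}{h\,\varepsilon_h}=\frac{(-1)^n t^n}{n!\,2^n\, t^{2n}}=\frac{(-1)^n}{n!\,t^n\,2^n},
\end{equation*}
as desired. There is no genuine obstacle here; the only point requiring care is the simultaneous bookkeeping of the two exponents. One must observe that on the left the power of $x$ is governed solely by $|\lambda|/2$ while the power of $b$ is governed solely by $\#\mathcal{H}_t(\lambda)$, so that the two conditions $|\lambda|=2tn$ and $\#\mathcal{H}_t(\lambda)=2n$ decouple cleanly, and that on the right the entire $x^{tn}$ must arise from the exponential once $b^{2n}$ has been selected, leaving only the constant term of the infinite product.
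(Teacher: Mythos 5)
Your proof is correct and is exactly the argument the paper intends: its entire proof of this corollary is the bracketed instruction ``compare the coefficients of $b^{2n}x^{tn}$ in Corollary~\ref{cor6}'', and your careful bookkeeping of the decoupled exponents (the $x$-power from $|\lambda|/2$, the $b$-power from $\#\mathcal{H}_t(\lambda)$ on the left; the full $x^{tn}$ forced out of the exponential, leaving the constant term $1$ of the infinite product, on the right) together with the division by $t^{2n}$ fills in precisely what the paper leaves implicit.
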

The condition on the sum can also be written $\lambda \in DD, ~ |\lambda|=2t n$ and the $t$-core of $\lambda$ is empty.
When $t=1$, we get back the symplectic hook formula \eqref{hookf} thanks to \eqref{eqsign}.

\begin{corollary}\label{4.12}[compare the coefficients of $b^ {2n} x^ {tn+m}$ in Corollary~\ref{cor6}] We have for any nonnegative integers $n$ and $m$:
\begin{equation}
\sum_{\stackrel{ \lambda \in DD, ~ |\lambda|=2t n+m}{\# \mathcal{H}_t(\lambda)=2n }}\;\displaystyle \delta_\lambda \prod _{h \in \mathcal{H}_t (\lambda)} \frac{1}{ h\, \varepsilon_h}=\displaystyle \frac{(-1)^nc_t(m)}{n! t^n 2^n},
\end{equation}
where $c_t(m)$ is the number of $DD$ $t$-cores of weight $m$.
\end{corollary}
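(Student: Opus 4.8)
The plan is to derive this corollary as a direct refinement of Corollary~\ref{cor9}, obtained by relaxing the weight constraint from $|\lambda|=2tn$ to $|\lambda|=2tn+m$ and keeping track of the extra contribution via the Littlewood decomposition. The starting point is the identity of Corollary~\ref{cor6}, namely
\begin{equation*}
\sum_{\lambda \in DD}  \delta_\lambda\, x^{|\lambda|/2} \prod_{ h \in \mathcal{H}_t(\lambda)} \frac{bt}{ h\, \varepsilon_h} =\exp(-tb^2 x^t/2) \prod_{k \geq 1}  (1-x ^k)(1-x^{kt})^ {t'-1}.
\end{equation*}
First I would observe that on the left-hand side, the exponent of $b$ equals $\#\mathcal{H}_t(\lambda)$, so extracting the coefficient of $b^{2n}$ isolates those $\lambda\in DD$ with $\#\mathcal{H}_t(\lambda)=2n$; this is exactly the sum appearing in the statement, weighted by $x^{|\lambda|/2}$ and $\delta_\lambda\prod_{h\in\mathcal{H}_t(\lambda)}(h\varepsilon_h)^{-1}$ (up to the factor $t^{2n}$ coming from the $t$ in the numerator).

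Next I would expand the right-hand side. The coefficient of $b^{2n}$ in $\exp(-tb^2x^t/2)$ is $(-t/2)^n x^{tn}/n!$, which accounts for the $t$-core-free part and reproduces the factor $(-1)^n/(n!\,t^n2^n)$ once the $t^{2n}$ normalization is carried through exactly as in the proof of Corollary~\ref{cor9}. The new ingredient is the remaining factor $\prod_{k\geq1}(1-x^k)(1-x^{kt})^{t'-1}$, which by Lemma~\ref{seriegen} is precisely $\sum_{\tilde\lambda\in DD_{(t)}}\delta_{\tilde\lambda}\,x^{|\tilde\lambda|/2}$, the signed generating function of doubled distinct $t$-cores. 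Thus, after extracting $b^{2n}$, the right-hand side reads
\begin{equation*}
\frac{(-1)^n}{n!\,t^n2^n}\, x^{tn}\sum_{\tilde\lambda\in DD_{(t)}}\delta_{\tilde\lambda}\,x^{|\tilde\lambda|/2}.
\end{equation*}
Comparing the coefficient of $x^{tn+m}$ on both sides then pairs $x^{tn}$ with the $t$-core term $x^{|\tilde\lambda|/2}$ of weight $m$, so the $t$-cores contributing are exactly those with $|\tilde\lambda|=2m$; the signed count $\sum_{|\tilde\lambda|=2m}\delta_{\tilde\lambda}$ is what the statement denotes $c_t(m)$.

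The only genuine subtlety, and the step I expect to require the most care, is reconciling the combinatorial meaning of $c_t(m)$ with the signed sum $\sum_{\tilde\lambda\in DD_{(t)},\,|\tilde\lambda|=2m}\delta_{\tilde\lambda}$ produced by Lemma~\ref{seriegen}: the corollary calls $c_t(m)$ ``the number of $DD$ $t$-cores of weight $m$,'' so one must verify that this signed count with the $\delta_{\tilde\lambda}$ weights indeed reduces to the stated quantity. This is where the interpretation must be pinned down precisely, since all the $\delta_{\tilde\lambda}$ signs and the halving of weights ($|\tilde\lambda|=2m$ versus weight $m$) have to match the convention used throughout; the identity $\delta_\lambda\prod_{h\in\mathcal{H}(\lambda)}\varepsilon_h=(-1)^{|\lambda|/2}$ from \eqref{eqsign} may be needed to confirm the signs. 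Everything else is the same coefficient bookkeeping as in Corollary~\ref{cor9}, now carried out with the $t$-core generating series retained rather than specialized to its constant term.
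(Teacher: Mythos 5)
Your derivation is exactly the paper's intended proof: the corollary carries no separate proof beyond its bracketed instruction, and your mechanism --- extract the coefficient of $b^{2n}$ from Corollary~\ref{cor6}, expand $\exp(-tb^2x^t/2)$, recognize the residual product $\prod_{k\geq1}(1-x^k)(1-x^{kt})^{t'-1}$ as the signed generating series of $DD_{(t)}$ via Lemma~\ref{seriegen}, then compare powers of $x$ --- is the right one, with the $t^{2n}$ normalization handled as in Corollary~\ref{cor9}. One small bookkeeping slip: with the statement's condition $|\lambda|=2tn+m$ one has $x^{|\lambda|/2}=x^{tn+m/2}$, so to obtain the equation as printed you should compare coefficients of $b^{2n}x^{tn+m/2}$ for even $m$ (the paper's own bracket ``$x^{tn+m}$'' is loose); the contributing cores then have $|\tilde\lambda|=m$, i.e.\ weight $m$ as the statement says, not $2m$ as in your extraction. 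Your version proves the same family of identities reparametrized by $m\mapsto 2m$, and for odd $m$ both sides vanish trivially (doubled distinct partitions have even weight), so nothing substantive is lost --- but as written your coefficient comparison does not literally yield the displayed equation.

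The subtlety you flag about $c_t(m)$ has a definite answer, and it is worth settling rather than hedging: the quantity produced by Lemma~\ref{seriegen} is the signed count $\sum_{\tilde\lambda\in DD_{(t)},\,|\tilde\lambda|=m}\delta_{\tilde\lambda}$, and this does \emph{not} equal the unsigned number of $DD$ $t$-cores of weight $m$ in general. Take $t=3$, $n=0$, $m=2$: the only $DD$ $3$-core of weight $2$ is $(2)$, with Durfee length $1$, so the left-hand side of the corollary equals $\delta_{(2)}=-1$, while the unsigned count is $1$. Hence the corollary is correct only with $c_t(m)$ read as the signed enumeration, equivalently as the coefficient of $x^{m/2}$ in $\prod_{k\geq1}(1-x^k)(1-x^{kt})^{t'-1}$; the paper's phrase ``the number of $DD$ $t$-cores of weight $m$'' is an imprecision that your verification step would have exposed. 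Note also that \eqref{eqsign} cannot rescue the unsigned reading: it relates $\delta_\lambda$ to the product of $\varepsilon_h$ over the \emph{full} hook multiset of $\lambda$, whereas here the cores contribute an empty $\mathcal{H}_t$-product, so no $\varepsilon_h$ factors are available to absorb the signs $\delta_{\tilde\lambda}$.
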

The condition on the sum can also be writen $\lambda \in DD, ~ |\lambda|=2t n$ and the weight of the $t$-core of $\lambda$ is $m$.

\begin{corollary}\label{4.13}[compare the coefficients of $x^{tn}y^{2n}(z+1)^{2n-1}$ in Theorem~\ref{generalisation}]\label{eqpointee} We have:
\begin{equation}
\sum_{\stackrel{\lambda \in DD, |\lambda|=2tn}{\#\mathcal{H}_t(\lambda)=2n}}\,\delta_\lambda\prod_{h \in \mathcal{H}_t
(\lambda)}\frac{1}{h \, \varepsilon_h}\sum_{h \in \mathcal{H}_t
(\lambda)}h \,\varepsilon_h = \frac{3(-1)^n}{(n-1)!t^n 2^{n}}.\end{equation}
\end{corollary}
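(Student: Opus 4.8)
The plan is to read off the coefficient of $x^{tn}y^{2n}(z+1)^{2n-1}$ from both sides of \eqref{eqgeneralisation}, in the same spirit as the coefficient extractions leading to Corollaries~\ref{cor9} and~\ref{4.12}, but now isolating the \emph{subleading} power of $z+1$ rather than the top one. Throughout I set $w:=z+1$, so that $2z+2=2w$ and the exponent in \eqref{eqgeneralisation} becomes a quadratic polynomial in $w$, namely $E:=(2z+1)(zt+3(t-1)/2)=2tw^2-3w+(3-t)/2$.

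First I would treat the left-hand side. Extracting the coefficient of $x^{tn}y^{2n}$ restricts the sum to those $\lambda\in DD$ with $|\lambda|=2tn$ and $\#\mathcal{H}_t(\lambda)=2n$, and after factoring out $y^{2n}$ leaves the polynomial $\prod_{h\in\mathcal{H}_t(\lambda)}\bigl(1-\tfrac{2tw}{\varepsilon_h h}\bigr)$ of degree $2n$ in $w$. Writing $a_h:=2t/(\varepsilon_h h)$, its coefficient of $w^{2n-1}$ is $-e_{2n-1}(\{a_h\})$, and since the elementary symmetric function of degree $2n-1$ in $2n$ quantities equals the full product times the sum of reciprocals, this equals $-(2t)^{2n-1}\bigl(\prod_h \tfrac{1}{h\varepsilon_h}\bigr)\bigl(\sum_h h\varepsilon_h\bigr)$. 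Summing against the weight $\delta_\lambda$ therefore yields $-(2t)^{2n-1}S$, where $S$ denotes precisely the sum on the left of the statement.

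Next I would treat the right-hand side. Only the last factor carries the variable $y$, through $u:=x^ty^2$; to produce $y^{2n}$ one must take $u^n$ from $(u;u)_\infty^{E}$, which simultaneously supplies the factor $x^{tn}$ and forces the two remaining $x$-products to contribute their constant term $1$. Hence the coefficient of $x^{tn}y^{2n}$ is $[u^n](u;u)_\infty^{E}$, which, writing $(u;u)_\infty^{E}=\exp(E\sum_{k\ge1}\log(1-u^k))$, is a polynomial in $E$ of degree $n$ with leading coefficient $(-1)^n/n!$. Because $E$ is quadratic in $w$, the term $E^{j}$ has $w$-degree $2j$, so only the top term $E^{n}$ can contribute to $w^{2n-1}$; its coefficient of $w^{2n-1}$ is $-3n(2t)^{n-1}$ (replace one factor $2tw^2$ by $-3w$). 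This gives the right-hand coefficient $\frac{(-1)^n}{n!}\bigl(-3n(2t)^{n-1}\bigr)=\frac{-3(-1)^n(2t)^{n-1}}{(n-1)!}$.

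Equating the two coefficients gives $-(2t)^{2n-1}S=\frac{-3(-1)^n(2t)^{n-1}}{(n-1)!}$, whence $S=\frac{3(-1)^n}{(n-1)!\,t^n2^n}$, as claimed. The only genuinely delicate step is the isolation of the $w^{2n-1}$ part on each side: on the right one must notice that the quadratic dependence of $E$ on $w$ kills every contribution below $E^n$, and on the left one must correctly identify $e_{2n-1}$ as the product of all the $a_h$ times the sum of their reciprocals; the remaining manipulations are routine.
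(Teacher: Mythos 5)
Your proof is correct and takes exactly the route the paper prescribes for this corollary, namely comparing the coefficients of $x^{tn}y^{2n}(z+1)^{2n-1}$ on both sides of \eqref{eqgeneralisation}. Your two key details --- identifying the $w^{2n-1}$-coefficient on the left as $-e_{2n-1}(\{2t/(\varepsilon_h h)\})=-(2t)^{2n-1}\prod_h\frac{1}{h\varepsilon_h}\sum_h h\varepsilon_h$, and noting on the right that, since $E=2tw^2-3w+(3-t)/2$ is quadratic in $w=z+1$, only the top term $\frac{(-1)^n}{n!}E^n$ of $[u^n](u;u)_\infty^E$ reaches degree $2n-1$, contributing $-3n(2t)^{n-1}$ --- correctly fill in the computation the paper leaves implicit.
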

This formula is the analogue for doubled distinct partitions of the \emph{marked hook formula} (\cite[Corollary 5.7]{HAN}). When $t=1$, Corollary~\ref{eqpointee} reduces to
\begin{equation*}\sum_{\lambda \in DD, |\lambda|=2n}\,\delta_\lambda\prod_{h \in \mathcal{H}
(\lambda)}\frac{1}{h \, \varepsilon_h}\sum_{h \in \mathcal{H}
(\lambda)}h \,\varepsilon_h = \frac{3(-1)^n}{(n-1)! 2^{n}},
\end{equation*}
which is in fact equivalent to the symplectic hook formula \eqref{hookf}, thanks to \eqref{eqsign} and after noticing that $\displaystyle \sum_{h \in \mathcal{H}
(\lambda)}h \,\varepsilon_h =3n$ for any $\lambda \in DD$ with $|\lambda| =2n$.

\section{Concluding remarks and questions}\label{section5}

A natural question which arises through our work and Remark~\ref{king} is whether we can prove combinatorially, starting from Macdonald identities, part or all of the equations $(5.8a)$ to $(5.8j)$ of \cite{KING89} (specialized at $x_i=1$). Apart from the already mentioned equation $(5.8b)$, a particular case of this problem, namely equation $(5.8a)$, corresponds to the type $\widetilde{D}$, and would give us a combinatorial interpretation of the character formula in this type.

Another problem which would arise after obtaining such combinatorial interpretations, would be to generalize the equations $(5.8a)$--$(5.8j)$ of \cite{KING89} by using the Littlewood decomposition. This would probably involve other interesting families of integer partitions and new properties of the corresponding Littlewood decompositions.

Surprisingly, when one aims to obtain a generalization of Theorem~\ref{theoremeintro} analogous to Theorem~\ref{generalisation} for $t$ even, some new affine types appear naturally. Indeed, to prove such a generalization, we need to compute the following weighted generating series for self conjugate partitions, which is a generalization of Macdonald identity in type $\widetilde{C}\check~$ (see \cite[p. 137, (b)]{ARS}), and which states that for any complex number $z$, the following expansion holds:
\begin{equation}\label{equaccheck}
\left(\prod_{i \geq 1} \frac{(1-x^{2i})^{z+1}}{1-x^i}\right)^{2z-1}= \sum_{\lambda \in SC}\delta_\lambda \, x^{|\lambda|} \prod_{h \in \mathcal{H}(\lambda)}\left(1-\frac{2z}{h\, \varepsilon_h}\right),
\end{equation}
where we recall that $SC$ is the set of selfconjugate partitions. Again, note that this is related to King's work (see \cite[Equation $(5.8j)$]{KING89}). Moreover, we are able to prove combinatorially \eqref{equaccheck} and to generalize it through new properties of the Littlewood decomposition. This will be done in a forthcoming work.

From the connection between our combinatorial approach and King's algebraic point of view, one can also wonder whether this could be lifted to the level of characters of Kac--Moody algebras, and therefore connected to the recent works of Bartlett--Warnaar \cite{BAR2} and Rains--Warnaar \cite{RW}.

\medskip

In \cite{DH}, Han and Dehaye generalized the Nekrasov-Okounkov formula \eqref{nekrasov} in another way, by introducing a weight function $\tau$. It is therefore natural to try to do the same with our formula \eqref{eqtheoremeintro} in Theorem~\ref{theoremeintro}.

One can also ask whether there is a proof of \eqref{eqtheoremeintro} as in \cite{IEA}, \textit{i.e.} through a symplectic Cauchy formula and an analogue of the topological vertex.

In a more number theoretical point of view, one can wonder if we can deduce from our work some congruences on the number of doubled distinct partitions, as it was done from Han formulas in \cite{CW, KEI} .

Finally, inspired by the marked hook formula (see \cite[Theorem 2.3]{HAN09}), Stanley proved in \cite{STA09} that if F is any symmetric function and if 
\begin{equation*}
\Phi_n (F) := \sum_{|\lambda|=n} \left(\displaystyle\prod_{h \in \mathcal{H}(\lambda)} h^ {-1}\right) F(h^2 : h \in \mathcal{H}(\lambda)),
\end{equation*}
then $\Phi_n (F)$ is a polynomial function of $n$. In the same way, inspired by Corollaries~\ref{4.11}--\ref{4.13}, one may wonder if such a property involving the doubled distinct partitions and the statistics $h\varepsilon_h$ and $\delta_\lambda$ could  hold.

\medskip
{\bf Acknowledgements.}
We would like to thank Guo-Niu Han, Ronald King and Bruce Westbury for helpful comments and suggestions on an earlier version of this paper.

\bibliographystyle{plain}
\bibliography{bibliographieang}

\end{document}